\newtheorem{Theorem}{Theorem}[section]
\newtheorem{Corollary}[Theorem]{Corollary}
\newtheorem{Lemma}[Theorem]{Lemma}
\newtheorem{Example}[Theorem]{Example}
\newtheorem{Proposition}[Theorem]{Proposition}
\theoremstyle{definition}
\newtheorem{Definition}[Theorem]{Definition}
\newtheorem{Notation}[Theorem]{Notation}
\newtheorem{Problem}[Theorem]{Problem}
\newtheorem{Remark}[Theorem]{Remark}
\numberwithin{equation}{section}
\def\A{{\mathcal A}}
\def\B{{\mathcal B}}
\def\N{{\mathcal N}}
\def\V{{\mathcal V}}
\def\W{{\mathcal W}}
\def\Z{{\mathcal Z}}
\def\CC{{\mathcal C}}
\def\KKK{{\bf K}}
\def\C{{\mathbb C}}
\def\R{{\mathbb R}}
\def\HH{{\mathbb H}}
\def\NN{{\mathbb N}}  
\def\PG{{$\mathcal{PG}$}}
\def\MPG{{$\mathcal{MPG}$}}
\def\mgen{{\mathrm  {mgen}}}
\def\diag{{\mathrm {diag}}}
\def\row{{\mathrm {row}}}
\def\col{{\mathrm {col}}}
\def\1{{\mathds{1}}}
\newcommand{\gen}[1]{\ensuremath {\langle #1\rangle _{Alg}}}
\newcommand{\lin}[1]{\ensuremath {\langle #1\rangle}}
\newcommand{\w}[1]{\widetilde{#1}}
\newcommand{\h}[1]{\widehat{#1}}
\begin{document}
\begin{frontmatter}
	
\title{On real algebras generated by positive and nonnegative matrices}

\author[MSU,CFAM]{N.A.~Kolegov\fnref{fn1}}
\ead{na.kolegov@yandex.ru}

\fntext[fn1]{The work is supported by Foundation for the Advancement of Theoretical Physics and Mathematics “BASIS”, grant 19-8-2-33-1.}

\address[MSU]{Lomonosov Moscow State University, Moscow, 119991, Russia.}
\address[CFAM]{Moscow Center for Fundamental and Applied Mathematics, Moscow, 119991, Russia.}

\begin{abstract}
{ Algebras generated by strictly positive matrices are described up to similarity, including the commutative, simple, and semisimple cases. We provide sufficient conditions for some block diagonal matrix algebras  to be generated by a set of nonnegative matrices up to similarity. Also we find all realizable dimensions of algebras generated by two nonnegative semi-commuting matrices. The last result provides the solution to the problem posed by M.~Kandi\'{c}, K.~\u{S}ivic (2017) \cite{KanSiv17}.}
\end{abstract}

\begin{keyword} Real matrix algebras, nonnegative matrices, positive matrices, semi-commuting matrices	\end{keyword}

\end{frontmatter}

\section{Introduction}

Two real matrices $A,B$ are called semi-commuting if their commutator $[A,B]=AB-BA$ is a nonpositive or nonnegative matrix. Algebras ge\-ne\-rated by two nonnegative semi-commuting matrices were considered first in \cite{Drnov, KanSiv17}. It was shown that dimensions of such algebras do not exceed $\frac{n(n+1)}{2}$ \cite[Theorem 3.2]{KanSiv17}. At the same time the following problem was posed.

\begin{Problem}{\cite[Question 3.4]{KanSiv17}} \label{problem}
	 Let $n\in\NN$, $n\leq k\leq \frac{n(n+1)}{2}$. Do there exist semi-commuting nonnegative matrices $A,B$ such that the unital algebra generated by $A,B$ has the dimension $k$?
\end{Problem}
To avoid an ambiguity, we note that the  paper \cite{KanSiv17} uses another terminology. Nonnegative and positive matrices are called positive and strictly positive, respectively.
It turns out that the answer to the above question is affirmative, the complete solution is obtained below in Corollary \ref{solution}. Besides, we consider a more general case of algebras generated by nonnegative and positive matrices. It is natural to study such algebras up to an automorphism of $M_n(\R)$, that is, up to similarity due to the Skolem–Noether theorem \cite[Sec. 12.6]{Pierce82}.

The present work is intended to facilitate a better understanding of difficult interrelations between nonnegative matrices and similarities. In particular, is a given matrix similar to nonnegative one? The answer is known in some specific cases. See \cite{BoroMoro97, Kell71} for details. A related task is to determine possible values of invariants of nonnegative matrices, such as spectrum. This is the widely known nonnegative inverse eigenvalue problem which is of particular importance. See \cite{EgLenNa04,JohnMPP18} for reviews of the problem and works \cite{AlPS18, CollJS18, CroLaff18, EllSm16} for some recent results. The present paper treats a generalization of the aforementioned problems. We change a matrix to a matrix algebra. So, the following natural questions arise. Is a given real matrix algebra similar to an algebra generated by nonnegative matrices? Are there any ne\-ces\-sa\-ry or sufficient spectral conditions on matrices of the algebra? If we deal with an algebra instead of a single matrix, it is possible to consider linear combinations and products of matrices. So, the theory of finite-dimensional algebras can be applied. Also we continue investigations of generators of matrix algebras, see works \cite{GutLMS18, KolMar, Pas19} for some recent results on this topic.

The work is organized as follows. In Section \ref{2} a system of notations and some preliminary results are given. In Section \ref{3} a criterion for unital matrix algebras to be positively generated up to si\-mi\-la\-ri\-ty is obtained. Commutative, simple, and semisimple matrix algebras of such type are completely described up to similarity. Section \ref{4} consists of two parts. Subsection \ref{4.1} provides some examples of nonnegatively generated algebras up to si\-mi\-la\-ri\-ty and algebras without this property. In Subsection \ref{4.2} we show that the property to be (minimally) nonnegatively generated up to similarity is preserved after a direct sum with an arbitrary matrix algebra. It gives new sufficient conditions for centralizers and algebras with a non-trivial center to be nonnegatively generated up to similarity. In Section \ref{5} we prove that matrix incidence algebras are generated by two nonnegative semi-commuting matrices. As the consequence, the solution to Problem \ref{problem} is obtained.

\section{Preliminaries}\label{2}

\subsection{Matrix algebras. General facts and constructions}

Let $M_n(\R)$ denote the algebra of all  $n\times n$ matrices over the field $\R$. We always assume $n\geq 1$, unless otherwise stated. Let $T_n(\R)$, $\w{T}_n(\R)$, and $D_n(\R)$ be the subalgebras of upper-triangular, lower-triangular, and diagonal matrices, respectively. 

Also $E_{ij}\in M_n(\R)$ denotes the matrix unit which contains $1$ in the position $(i, j)$ and zeros elsewhere. Besides, $O_n,I_n,\1_n\in M_n(\R)$ denote the identity matrix, the zero matrix, and the matrix of ones, respectively. The zero $m\times n$ matrix is denoted by $O_{m\times n}$. We write $O,I,\1$ without the subscripts if they are known from the context. Denote by $J_k(\lambda)$ the Jordan cell of the size $k$ corresponding to the eigenvalue $\lambda$. 

For $A\in M_n(\R)$,  $(A)_{ij}$ or $a_{ij}$ is a real number in the position $ (i, j)$. The row vector $\row_i(A)$ and the column vector $\col_j(A)$ coincide with the $i$-th row and the $j$-th column of the matrix $A$, respectively. Let $\sigma(A)\subseteq\C$ be the spectrum of the matrix $A$. An eigenvalue is called simple if it has algebraic multiplicity $1$. Let $\rho(A)=\max\{|\lambda|:\lambda\in\sigma(A)\}$ be the spectral radius of $A$. Besides, $||\cdot||$ denotes the uniform norm on matrices, that is, $||A||=\max\{|a_{ij}|:i,j=1,\ldots,n\}$. Also $C(A)$ is a \textit{centralizer (commutant)} of $A$, i.e. $C(A)=\{X\in M_n(\R)~|~AX=XA\}$.

 We say that $R\in M_n(\R)$ has a {\em regular (upper-)triangular form} \cite[Ch. VIII]{Gant60} if there exists a vector $(r_1,\ldots,r_n)\in\R^n$ such that 
 $$
 R=R_{nn}(r_1,\ldots,r_n)=\left(\begin{array}{cccc}
 r_1 & r_2 & \ldots &  r_n \\
 0& r_1 & \ddots& \vdots\\
 \vdots & & \ddots & r_2\\
 0&\ldots &0 &  r_1
 \end{array}\right).
 $$
Let $R$ be a real matrix of a size $p\times q$, $p\neq q$. It has a regular upper-triangular form if 
$$
R=R_{pq}(r_1,\ldots,r_{\min\{p,q\}}) = \left[
\begin{array}{ccc}
\left(O_{p\times(q-p)}~R_{pp}(r_1,\ldots,r_p)\right),~~\text{if}~~ q>p; \\
	\\
\left(\begin{array}{cc}
~R_{qq}(r_1,~\ldots,~r_q)~\\
O_{(p-q)\times q}
\end{array}\right), ~~~~\text{if}~~ p>q.\\
\end{array}
\right.
$$

A matrix algebra $\A\subseteq M_n(\R)$ is called unital if $I_n\in\A$. We will mainly deal with unital matrix algebras. A nonempty set $\Phi \subseteq M_n(\R)$ \textit{generates} a unital matrix algebra $\A$ if $\A$ is the minimal (by inclusion) algebra containing $\Phi\cup\{I_n\}$. The notation is $\A=\gen{\Phi}$. If $\Phi=\{A\}$, we write $\gen{A}$ instead of $\gen{\{A\}}$. Denote $\mgen(\A)=\min\{|\Phi|~|~\gen{\Phi}=\A\}$. A ge\-ne\-rating system of $\A$ is minimal if $|\Phi|=\mgen(\A)$. Let $\lin{\cdot}$ denote the linear span over $\R$.  

If a matrix algebra $\A$ has only trivial two-sided ideals $\{O\}$ and $\A$, it is said to be {\em simple}. A matrix algebra $\A$ is {\em semisimple} if $\A$ is isomorphic, as $\R-$algebra, to a direct sum of simple matrix algebras. However, this isomorphism can differ from similarity.

A matrix algebra $\A\subseteq M_n(\R)$ is called {\em irreducible} if it has exactly two invariant subspaces: $\{0\}$, $\R^n$. An irreducible matrix algebra is always simple \cite[p. 34 Corollary 2.2]{RadRY04} but the converse does not always hold, for instance, consider the algebra $\B=\{A\oplus A~|~A\in M_n(\R)\}\subseteq M_{2n}(\R)$. Next theorem is applicable over any field but we need only the case of $\R$.

\begin{Theorem}[{{\cite{RadRY04, Lam98}} Generalized Burnside's theorem}] \label{burnside}
Let $\A\subseteq M_n(\R)$ be a subalgebra. Then $\A=M_n(\R)$ if and only if $\A$ is irreducible and contains a matrix of rank $1$.
\end{Theorem}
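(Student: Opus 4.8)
The plan is to prove only the nontrivial implication, since if $\A=M_n(\R)$ it obviously has no invariant subspace besides $\{0\}$ and $\R^n$ and contains the rank-one matrix $E_{11}$. So I assume $\A$ is irreducible and contains a matrix $R$ of rank one, and fix a factorization $R=uv^T$ with nonzero $u,v\in\R^n$; everything will be extracted from this factorization.

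First I would record that an irreducible $\A$ acts nondegenerately: the set $N=\{w\in\R^n:Aw=0\ \text{for all}\ A\in\A\}$ is an $\A$-invariant subspace, and $N\neq\R^n$ because $R\neq O$, so by irreducibility $N=\{0\}$. Hence for any nonzero $w$ the $\A$-invariant subspace $\A w=\{Aw:A\in\A\}$ is nonzero and therefore equals $\R^n$; in particular $\A u=\R^n$. Next I would observe that the transposed set $\A^T=\{A^T:A\in\A\}$ is again a subalgebra of $M_n(\R)$ (from $(AB)^T=B^TA^T$), that a subspace $M$ is $\A^T$-invariant exactly when $M^\perp$ is $\A$-invariant, and hence that $\A^T$ is irreducible as well and contains the rank-one matrix $R^T=vu^T$. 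Applying the nondegeneracy statement to $\A^T$ and the vector $v$ then gives $\{A^Tv:A\in\A\}=\R^n$.

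The conclusion follows quickly: for arbitrary $A,B\in\A$ the product $ARB=A(uv^T)B=(Au)(B^Tv)^T$ lies in $\A$ since $\A$ is multiplicatively closed, and by the previous step $Au$ and $B^Tv$ independently range over all of $\R^n$. So $\A$ contains every outer product $xy^T$, i.e. every matrix of rank at most one; as these span $M_n(\R)$ and $\A$ is a linear subspace, $\A=M_n(\R)$.

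I expect the only point requiring real care to be the transpose step — checking that $\A^T$ is genuinely an algebra and that $M\mapsto M^\perp$ sets up the claimed correspondence between invariant subspaces, so that irreducibility transfers. If one prefers to avoid $\A^T$, the same conclusion comes from checking directly that $\{w:\langle Bw,v\rangle=0\ \text{for all}\ B\in\A\}$ is $\A$-invariant and, being proper (because $\A\R^n=\R^n$), equals $\{0\}$. The remaining steps are routine manipulations of $R=uv^T$.
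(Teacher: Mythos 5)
Your argument is correct. Note, however, that the paper does not prove Theorem \ref{burnside} at all — it is cited from \cite{RadRY04, Lam98} as a known background fact — so there is no in-paper proof to compare against. What you give is a complete, self-contained, elementary proof of the nontrivial direction, and it is essentially the standard modern argument: first deduce nondegeneracy from irreducibility, then use the rank-one element $R=uv^T$ together with multiplicative closure to manufacture all outer products. The only point I would tighten in the writeup is the step asserting $\A u=\R^n$: you should say explicitly that $\A w=\{Aw:A\in\A\}$ is a linear subspace (the image of the subspace $\A$ under the linear map $A\mapsto Aw$), that it is $\A$-invariant (since $B(Aw)=(BA)w$), and that it is nonzero because $w\notin N=\{0\}$; you state the conclusion but compress the justification. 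The transpose step is handled carefully, as you anticipated it would need to be, and the final span argument ($E_{ij}=e_ie_j^T$ and $\A$ is a subspace) closes the proof. Your suggested alternative avoiding $\A^T$ — working with the invariant subspace $\{w:\langle Bw,v\rangle=0\ \forall B\in\A\}$ and showing it is proper because $\A u=\R^n$ already forces $\A\cdot\R^n=\R^n$ — is also valid and is slightly leaner, since it reuses the nondegeneracy step instead of re-deriving it for $\A^T$.
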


\subsection{Covering matrices and algebras containing all diagonal matrices}
First we introduce the following notation.

\begin{Notation}
	If $\Phi\subseteq M_n(\R)$ is an arbitrary nonempty subset, then
	$$\Omega(\Phi)=\{(i,j)\in\NN\times\NN~|~ \exists A\in\Phi:~a_{ij}\neq 0 \}.$$
\end{Notation}
If $\Phi=\{A\}$ then $\Omega(A)$ will be used instead of $\Omega(\{A\})$. The following relations hold
$$\Omega(\lin{\Phi})=\Omega(\Phi),~\Omega(\Phi)\subseteq\Omega(\gen{\Phi}).$$

\begin{Definition}
	For an arbitrary nonempty set of matrices $\Phi\subseteq M_n(\R)$, a matrix $A\in\Phi$ is said to {\em cover} $\Phi$ or to be a {\em covering matrix} for $\Phi$ if $\Omega(A)=\Omega(\Phi)$.
\end{Definition}
If $\Phi$ is a linear space then  it always contains a covering matrix since the field $\R$ is infinite (see {\cite[Proposition~3.4]{Mar16}}). 

A subalgebra $\A\subseteq M_n(\R)$ is called \textit{matrix incidence algebra} if it has a basis satisfying the following three conditions. $1)$ This basis contains only matrix units. $2)$ All diagonal matrix units are in the basis. $3)$ The basis does not simultaneously contain two symmetric matrix units $E_{ij}$ and $E_{ji}$ for $i\neq j$. For more general approaches, see \cite{Cig19, SpDon}. Note that $$\Omega(\A)=\{(i,j)\in\NN\times\NN~|~E_{ij}\in\A\}.$$ 

Using the same methods as in \cite[Theorems 3.1, 3.2]{KolMar} we obtain the next statement.

\begin{Theorem}[{\cite{KolMar, LonRos00}}] \label{IncGen}
	Let $\A\subseteq M_n(\R)$ be a subalgebra with a covering matrix $A$, $D_n(\R)\subseteq \A$, $D=\diag\{d_1,\ldots,d_n\}$ with $d_i\neq d_j$ if $i\neq j$. Then $\gen{A,D}=\A$.
\end{Theorem}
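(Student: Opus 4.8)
The plan is to exploit the diagonal matrix $D$ with distinct entries as a device for extracting individual matrix units from $A$, and then to note that $\A$ is forced to be linearly spanned by matrix units. First I would observe that $\gen{D}=D_n(\R)$: since the $d_i$ are pairwise distinct, the minimal polynomial of $D$ has degree $n$, so $\R[D]=\gen{D}$ has dimension $n$, and being contained in the $n$-dimensional algebra $D_n(\R)$ it must equal it. Concretely, the Lagrange interpolation polynomials $p_i(x)=\prod_{k\neq i}(x-d_k)(d_i-d_k)^{-1}$ satisfy $p_i(D)=E_{ii}$, so every diagonal matrix unit $E_{ii}$ already belongs to $\gen{D}\subseteq\gen{A,D}$.

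Next, for an arbitrary pair $(i,j)$ one computes $E_{ii}AE_{jj}=a_{ij}E_{ij}$. If $(i,j)\in\Omega(A)$, then $a_{ij}\neq 0$, hence $E_{ij}=a_{ij}^{-1}E_{ii}AE_{jj}\in\gen{A,D}$. Since $A$ is a covering matrix for $\A$ we have $\Omega(A)=\Omega(\A)$, so this shows $E_{ij}\in\gen{A,D}$ for every $(i,j)\in\Omega(\A)$, and therefore $\lin{\{E_{ij}\mid (i,j)\in\Omega(\A)\}}\subseteq\gen{A,D}$.

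It remains to identify $\A$ with that span. Here I would use $D_n(\R)\subseteq\A$: for any $X\in\A$ and any indices $i,j$ we get $E_{ii}XE_{jj}=x_{ij}E_{ij}\in\A$, so $E_{ij}\in\A$ whenever $(i,j)\in\Omega(\A)$, and conversely each such unit has a nonzero $(i,j)$-entry; hence $\A=\lin{\{E_{ij}\mid (i,j)\in\Omega(\A)\}}\subseteq\gen{A,D}$. The reverse inclusion $\gen{A,D}\subseteq\A$ is immediate because $A\in\A$ and $D\in D_n(\R)\subseteq\A$, which gives $\gen{A,D}=\A$. There is no serious obstacle in this argument; the only step requiring a little care is the observation that an algebra containing all diagonal matrices is automatically spanned by matrix units indexed by $\Omega(\A)$ — this is precisely what allows a single covering matrix $A$ to recover the whole algebra — while the rest is the routine "conjugation by the idempotents $E_{ii}$" computation.
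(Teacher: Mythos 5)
Your proof is correct and follows essentially the same route as the paper's (which is deliberately terse): establish $\gen{D}=D_n(\R)$, use the diagonal idempotents $E_{ii}$ to sandwich $A$ and extract the matrix units $E_{ij}$ for $(i,j)\in\Omega(A)=\Omega(\A)$, and note that $D_n(\R)\subseteq\A$ forces $\A=\lin{\{E_{ij}\mid(i,j)\in\Omega(\A)\}}$. You simply spell out the details (Lagrange interpolation, the $E_{ii}XE_{jj}$ computation) that the paper leaves implicit with a citation.
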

\begin{proof}
	First note that $\A=\lin{\{E_{ij}~|~(i,j)\in\Omega(\A)\}}$ since $D_n(\R)\subseteq \A$. Furthermore, $D_n(\R)=\gen{D}\subseteq\gen{A,D}$. The equality $\Omega(A)=\Omega(\A)$ implies an inclusion $\gen{D_n(\R),A}\supseteq \lin{\{E_{ij}~|~(i,j)\in\Omega(\A)\}}$. So, ${\gen{A,D}=\A}$.
\end{proof}
In particular, the above theorem is applicable to a matrix incidence algebra. For recent results on algebras containing all diagonal matrices, see \cite{Cig19} and \cite{KolMar}.

Also we need the next result (see \cite[the proof of the main theorem, p. 120]{LonRos00} and \cite[Proposition 1.2.7, p. 15]{SpDon}).

\begin{Theorem}[{\cite{LonRos00, SpDon}}] \label{iso}	
	Let $\A\subseteq M_n(\mathbb{R})$ be a matrix incidence algebra. Then there exist a matrix incidence algebra $\B\subseteq T_n(\mathbb{R})$ and a permutation matrix $P$ such that $\B = P^{-1}\A P$. 
\end{Theorem}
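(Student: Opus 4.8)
The plan is to reduce to the upper-triangular case and then exhibit an explicit conjugating permutation. Let $\A\subseteq M_n(\R)$ be a matrix incidence algebra, so $\A=\lin{\{E_{ij}\mid (i,j)\in\Omega(\A)\}}$, the set $\Omega(\A)$ contains all diagonal pairs $(i,i)$, and it does not contain both $(i,j)$ and $(j,i)$ for any $i\neq j$. The first step is to read off from $\Omega(\A)$ a binary relation $\preceq$ on $\{1,\ldots,n\}$ by declaring $j\preceq i$ whenever $(i,j)\in\Omega(\A)$; reflexivity is immediate, and since $\A$ is an algebra, $E_{ij}E_{jk}=E_{ik}\in\A$ forces transitivity, so $\preceq$ is a preorder. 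The antisymmetry needed to make $\preceq$ a partial order is exactly condition $3)$ in the definition of a matrix incidence algebra: if $j\preceq i$ and $i\preceq j$ with $i\neq j$, then both $E_{ij}$ and $E_{ji}$ lie in $\A$, a contradiction. Hence $\preceq$ is a partial order on $\{1,\ldots,n\}$.

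The second step is to linearly extend this partial order. Every finite partial order admits a linear extension (topological sort), so there is a permutation $\pi$ of $\{1,\ldots,n\}$ such that $j\preceq i$ implies $\pi(j)\le\pi(i)$. Let $P$ be the permutation matrix with $P e_k = e_{\pi(k)}$, equivalently $P^{-1}E_{ij}P = E_{\pi(i)\,\pi(j)}$ (I will fix the exact convention in the write-up so the indices land correctly). The third step is to check that $\B:=P^{-1}\A P$ is again a matrix incidence algebra and that it is contained in $T_n(\R)$. Since conjugation by a permutation sends matrix units to matrix units bijectively and preserves the diagonal, $\B$ has a basis of matrix units including all diagonal ones, and it cannot contain a symmetric pair because $\A$ does not; so $\B$ is a matrix incidence algebra. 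Moreover $E_{\pi(i)\pi(j)}\in\B$ only when $(i,j)\in\Omega(\A)$, i.e. $j\preceq i$, whence $\pi(j)\le\pi(i)$, so every basis matrix unit of $\B$ is upper-triangular and $\B\subseteq T_n(\R)$, as required.

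I do not expect a serious obstacle here; the only thing to be careful about is bookkeeping. The two points that need attention are: (i) verifying that $\Omega(\A)$ really defines a partial order — this is where all three defining properties of a matrix incidence algebra get used, transitivity coming from closure under multiplication and antisymmetry from the no-symmetric-pair condition — and (ii) getting the orientation of the conjugation right, i.e. making sure the linear extension $\pi$ is chosen so that $P^{-1}\A P$ lands in the upper-triangular matrices rather than the lower-triangular ones (if it lands in $\w{T}_n(\R)$, replace $\pi$ by $k\mapsto n+1-\pi(k)$). Everything else — that permutation conjugation permutes matrix units and preserves the diagonal, and that a finite poset has a linear extension — is standard. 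This matches the references cited: the argument is exactly the one in \cite[proof of the main theorem, p.~120]{LonRos00} and \cite[Proposition~1.2.7]{SpDon}, recast in the present notation.
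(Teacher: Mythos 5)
The paper states Theorem \ref{iso} without proof, importing it directly from \cite{LonRos00, SpDon}; your argument is the standard one from those references (the relation read off from $\Omega(\A)$ is reflexive by condition $2)$, transitive by closure of $\A$ under multiplication, and antisymmetric by the no-symmetric-pair condition, after which one takes a linear extension and conjugates by the corresponding permutation matrix), and it is correct. The one slip is orientation: with your convention $j\preceq i\Leftrightarrow(i,j)\in\Omega(\A)$ and a linear extension satisfying $j\preceq i\Rightarrow\pi(j)\le\pi(i)$, the conjugated unit $E_{\pi(i)\pi(j)}$ lands in the \emph{lower}-triangular part --- but you flag this yourself and the fix $\pi\mapsto n+1-\pi$ is immediate.
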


\subsection{The ordered structure on matrix algebras}
Let $\R_{\geq 0}=\R^+=[0,+\infty)$, $\R_{>0}=(0,+\infty)$. The algebra $M_n(\mathbb{R})$ is partially ordered by the following relation. Put $A\geq B$ if and only if $A-B\in M_n(\R^+)$. Also $A>B$ if and only if $A-B\in M_n(\R_{>0})$. Besides, if $A\geq O$ ($A>O$) then $A$ is said to be nonnegative (respectively, positive).
 
 Two real matrices $A$ and $B$ \textit{semi-commute} if the commutator $[A,B] = AB - BA$ is comparable with the zero matrix (either $[A,B]\geq O$, or $[A,B]\leq O$).

Next we formulate some basic properties of algebras generated by nonnegative matrices. 

\begin{Theorem} \label{Criterion}
	Let $\A\subseteq M_n(\R)$ be a unital matrix algebra. The following statements are equivalent.
	\begin{enumerate}
		\item $\A$ has a nonnegative covering matrix. 
		\item $\A$ is generated by a set of nonnegative matrices.
		\item $\A$ has a basis consisting of nonnegative matrices.
	\end{enumerate}
\end{Theorem}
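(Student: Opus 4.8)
The plan is to prove the cyclic chain of implications $(1)\Rightarrow(3)\Rightarrow(2)\Rightarrow(1)$, since $(3)\Rightarrow(2)$ and $(2)\Rightarrow(1)$ are nearly immediate and the real content is $(1)\Rightarrow(3)$.

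\textbf{The easy implications.} Assume $(3)$: $\A$ has a basis $\{B_1,\dots,B_m\}$ of nonnegative matrices. Then $\A$ is generated by this finite set of nonnegative matrices (it is even spanned by it), so $(2)$ holds. Now assume $(2)$: $\A=\gen{\Phi}$ for some set $\Phi\subseteq M_n(\R^+)$. By the remark after the definition of covering matrix, the linear space $\lin{\Phi}$ contains a covering matrix $A$ for $\Phi$, i.e.\ $\Omega(A)=\Omega(\Phi)$; moreover $A$ can be taken to be a nonnegative linear combination of elements of $\Phi$ — indeed, since $\R_{>0}$ is a nonempty open set, a generic \emph{positive} combination $A=\sum_k t_k A_k$ with all $t_k>0$ has a nonzero entry in every position of $\Omega(\Phi)$ and is nonnegative. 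Then $\Omega(\Phi)\subseteq\Omega(\A)$, so it remains to see that $A$ covers $\A$, not merely $\Phi$. This needs the observation that for a set of \emph{nonnegative} matrices, no cancellation can occur in products or sums, so $\Omega(\gen{\Phi})=\Omega(\Phi)$; hence $\Omega(A)=\Omega(\Phi)=\Omega(\A)$ and $A$ is a nonnegative covering matrix, giving $(1)$.

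\textbf{The main implication $(1)\Rightarrow(3)$.} This is the heart of the matter. Suppose $\A$ has a nonnegative covering matrix $A$, so $\Omega(A)=\Omega(\A)$. The goal is to build, for each pair $(i,j)\in\Omega(\A)$, a nonnegative matrix $C_{ij}\in\A$ whose support is a single-vertex "thinning" of $\Omega(\A)$ refined enough that the collection $\{C_{ij}\}$ is linearly independent and of the right cardinality $\dim\A$. The natural idea is to exploit that $\A$, as a unital algebra containing the covering matrix $A$, contains all polynomials in $A$ and more importantly is closed under the operations that let us isolate entries. A clean route: pass to a Wedderburn-type normal form. Up to similarity one may assume $\A$ is in block upper-triangular form with the radical strictly above the block diagonal; but since we must preserve \emph{nonnegativity} we cannot freely conjugate, so instead I would argue directly with the partial order. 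Using that $A\ge O$ covers $\A$, consider the spectral projections / Peirce-type idempotents of $\A$ that are polynomials in elements of $\A$: powers $A^k$ are nonnegative, and suitable nonnegative linear combinations $p(A)$ with nonnegative coefficients remain nonnegative. The key lemma to isolate is: if $\A$ contains a nonnegative covering matrix, then for each $(i,j)\in\Omega(\A)$ there is a nonnegative element of $\A$ supported exactly on $\{(i,j)\}$ together possibly with diagonal positions — and then a further nonnegative correction removes the diagonal part. Assembling these gives a nonnegative basis.

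\textbf{Where the difficulty lies.} The main obstacle is precisely that similarity and nonnegativity pull against each other: the structure theory of $\A$ (Wedderburn decomposition, Theorems~\ref{iso} and \ref{IncGen}) is available only up to similarity, whereas producing a \emph{nonnegative} basis is a statement not invariant under similarity. So the proof cannot simply invoke a normal form; it must extract nonnegative elements of $\A$ by order-theoretic and combinatorial arguments on the support pattern $\Omega(\A)$, using that $\Omega(\A)$ is realized by a single nonnegative matrix $A\in\A$. The technical crux will be showing that the supports of the entrywise-minimal nonnegative elements of $\A$ sitting "below" powers and products of $A$ span all of $\A$; equivalently, that one can peel off positions of $\Omega(\A)$ one at a time within $\A$ while staying nonnegative. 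Once that peeling lemma is in place, linear independence and the count $\#\Omega(\A)\ge\dim\A$ (with equality when $\A$ is spanned by matrix units, and in general a spanning nonnegative family refined to a basis) finish the argument.
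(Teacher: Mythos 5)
Your chain $(1)\Rightarrow(3)\Rightarrow(2)\Rightarrow(1)$ is a reasonable plan, and $(3)\Rightarrow(2)$ is indeed immediate, but both of your other two links have problems, and you have also inverted the difficulty: in fact $(1)\Rightarrow(2)$ and $(2)\Rightarrow(3)$ are the short steps, and $(3)\Rightarrow(1)$ is the trivial one. The paper's route is $(1)\Rightarrow(2)\Rightarrow(3)\Rightarrow(1)$, where $(1)\Rightarrow(2)$ is: take any generating set $\{A_1,\dots,A_m\}$ of $\A$ and a nonnegative covering matrix $A$; then each $A_i+cA$ for large enough $c$ is nonnegative (since $A$ is strictly positive on every position of $\Omega(\A)$ and zero elsewhere, and $A_i$ is zero off $\Omega(\A)$), and $\{A_i+cA\}\cup\{A\}$ still generates $\A$. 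Then $(2)\Rightarrow(3)$: the products of finitely many nonnegative generators and $I$ span $\A$ and are all nonnegative, so a nonnegative basis can be extracted. And $(3)\Rightarrow(1)$: the sum of a nonnegative basis is a nonnegative covering matrix, because for nonnegative matrices addition has no cancellation.

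Your $(2)\Rightarrow(1)$ argument contains a false claim: you assert that for a set $\Phi$ of nonnegative matrices, $\Omega(\gen{\Phi})=\Omega(\Phi)$, justifying it by "no cancellation." No cancellation is correct for sums and for individual entries of a product, but multiplication can still enlarge the support: with $\Phi=\{E_{12},E_{23}\}\subseteq M_3(\R)$ we have $E_{12}E_{23}=E_{13}\in\gen{\Phi}$, so $(1,3)\in\Omega(\gen{\Phi})\setminus\Omega(\Phi)$. A positive combination of elements of $\Phi$ therefore covers $\Phi$ but not $\A$. The correct move is the one above: pass to a finite spanning set of products first, then sum.

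Your $(1)\Rightarrow(3)$ sketch is also heading in a direction that cannot succeed. You propose to produce, for each $(i,j)\in\Omega(\A)$, a nonnegative element of $\A$ supported on the single position $(i,j)$ (up to a diagonal correction). But $\A$ need not contain any such element: for $\A=\lin{I,\,E_{12}+E_{23},\,E_{13}}\subseteq M_3(\R)$, every element with a nonzero $(1,2)$-entry necessarily has a nonzero $(2,3)$-entry, so no matrix in $\A$ is supported on $\{(1,2)\}$ alone. Fortunately, isolating positions is unnecessary — you only need a spanning family of nonnegative matrices, not one supported per position. Once you have a nonnegative generating set (from the covering-matrix shift above), the finitely many products of those generators already give a nonnegative spanning family, and any basis chosen from it is the desired nonnegative basis.
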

	\begin{proof}
		First we note that $\Omega(\A)\neq\varnothing$ since $\A$ is unital.
		\begin{enumerate}
			\item[$1)\Rightarrow 2)$]  If $\gen{A_1,\ldots, A_m}=\A$ and $A$ is a nonnegative covering matrix, then $\{A_i+\frac{||A_i||}{||A||}A\}_{i=1}^m\cup\{A\}$ is a nonnegative generating system of $\A$.
			\item[$2)\Rightarrow3)$] If $\{A_1,\ldots,A_m\}$ is a nonnegative generating system, then some finite set of products of this matrices constitutes a nonnegative basis.
			\item[$3)\Rightarrow1)$] The sum of the nonnegative matrices from the basis is actually a co\-ve\-ring matrix.
		\end{enumerate}
		
	\end{proof}
	
	Considering the case $\Omega(\A)=\N\times\N$, $\N=\{1,\ldots,n\}$ we immediately get the next corollary.
	
	\begin{Corollary} \label{simplecriterionpositive}
	Let $\A\subseteq M_n(\R)$ be a unital matrix algebra. Then $\A$ is generated by a set of positive matrices if and only if $\A$ contains a positive matrix.
	\end{Corollary}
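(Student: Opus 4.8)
The plan is to read this Corollary as the specialization of Theorem~\ref{Criterion} to the situation $\Omega(\A)=\N\times\N$, where $\N=\{1,\dots,n\}$, the point being that a \emph{positive} matrix is automatically a covering matrix of every algebra that contains it. Indeed, if $A\in\A$ and $A>O$, then every entry of $A$ is nonzero, so $\Omega(A)=\N\times\N$; since $\Omega(A)\subseteq\Omega(\A)\subseteq\N\times\N$, this forces $\Omega(\A)=\N\times\N$ and hence $\Omega(A)=\Omega(\A)$, i.e.\ $A$ covers $\A$. Conversely, a nonnegative covering matrix of an algebra with $\Omega(\A)=\N\times\N$ has all entries nonzero and nonnegative, hence positive. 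So ``$\A$ has a positive matrix'' is equivalent to ``$\A$ has a positive covering matrix''.

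The direction ``$\Leftarrow$'' is then immediate: if $\A=\gen{\Phi}$ for a set $\Phi$ of positive matrices (necessarily nonempty by the definition of a generating set), then any element of $\Phi\subseteq\A$ exhibits a positive matrix in $\A$.

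For ``$\Rightarrow$'', suppose $A\in\A$ is positive, so $A$ covers $\A$ by the remark above. Being finite-dimensional, $\A$ has a finite generating set $A_1,\dots,A_m$ (for instance a linear basis, since then $\gen{A_1,\dots,A_m}\supseteq\lin{A_1,\dots,A_m}=\A$). Mimicking the implication $1)\Rightarrow2)$ of Theorem~\ref{Criterion}, I would replace each $A_i$ by $B_i:=A_i+t_iA$ for a suitably large scalar $t_i>0$; the system $\{A,B_1,\dots,B_m\}$ generates the same unital algebra as $\{A_1,\dots,A_m\}$, namely $\A$, because $A_i=B_i-t_iA\in\gen{A,B_1,\dots,B_m}$ and conversely $A,B_i\in\A$. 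So it remains only to choose the $t_i$ so that each $B_i$ is strictly positive.

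That last choice is the one step that is not completely routine, and it is precisely where the positive case differs from the nonnegative one: the coefficient $\|A_i\|/\|A\|$ used in Theorem~\ref{Criterion} guarantees nonnegativity but not strict positivity. I would instead use $\mu:=\min\{a_{kl}:1\le k,l\le n\}>0$ (available because $A>O$) and set $t_i:=1+\|A_i\|\,\mu^{-1}$; then for all $k,l$ one has $(B_i)_{kl}=(A_i)_{kl}+t_ia_{kl}\ge -\|A_i\|+t_i\mu=\mu>0$, so $B_i>O$, and $\A$ is generated by the positive matrices $A,B_1,\dots,B_m$. The only ``obstacle'' is thus this quantitative tweak; the rest is bookkeeping with $\Omega$ and the definition of $\gen{\cdot}$.
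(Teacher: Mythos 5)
Your argument is correct, and it is the same approach the paper intends: read the Corollary as the specialization of Theorem~\ref{Criterion} to the case $\Omega(\A)=\N\times\N$, observe that a positive matrix in $\A$ is automatically a covering matrix, and then replace a finite generating set $\{A_i\}$ by $\{A\}\cup\{A_i+t_iA\}$ with $t_i$ large enough to make each perturbed matrix strictly positive. Your choice $t_i=1+\|A_i\|\mu^{-1}$ with $\mu=\min_{k,l}a_{kl}$ does the job cleanly. (Your $\Rightarrow$/$\Leftarrow$ labels are swapped relative to the statement as written, but that is cosmetic.)

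One small correction to your commentary on the paper: you say the coefficient $\|A_i\|/\|A\|$ used in the proof of $1)\Rightarrow 2)$ of Theorem~\ref{Criterion} ``guarantees nonnegativity but not strict positivity.'' In fact, with $\|\cdot\|$ the uniform norm (the max absolute entry, as the paper defines it), that coefficient does not even guarantee nonnegativity: at a position $(k,l)\in\Omega(\A)$ where $a_{kl}$ is strictly below $\|A\|$, the lower bound $-\|A_i\|+\tfrac{\|A_i\|}{\|A\|}a_{kl}$ is negative. The coefficient that works in general is $\|A_i\|/\mu_A$, where $\mu_A=\min\{a_{kl}:(k,l)\in\Omega(\A)\}$ is the smallest \emph{nonzero} entry of the nonnegative covering matrix $A$ (positive because $A$ covers $\A$). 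When $\Omega(\A)=\N\times\N$ this $\mu_A$ is exactly your $\mu$, and any $t_i\ge\|A_i\|/\mu$ gives $B_i\ge O$ while $t_i>\|A_i\|/\mu$ gives $B_i>O$, which is what your $t_i=1+\|A_i\|/\mu$ achieves. So your ``quantitative tweak'' is not just an optional refinement for strict positivity --- it is actually needed already for the nonnegative case, and your version is the one that makes the paper's scheme work.
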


\begin{Definition} \label{maindef}
	$ $
	\begin{enumerate} 
		\item A subalgebra $\A\subseteq M_n(\R)$ is said to be {\em positively  generated} or a {\em \PG-algebra} if it is generated by a set of positive matrices.
		\item A subalgebra $\A\subseteq M_n(\R)$ is called a {\em \PG-algebra up to similarity} if there exists a nonsingular matrix $C\in M_n(\R)$ such that $C^{-1}\A C$ is a \PG-algebra.
	\end{enumerate}
	
	The similar terminology will be used for algebras that have positive minimal generating systems ({\em minimally positively generated or \MPG-algebras })  and for  algebras generated by nonnegative matrices ({\em nonnegatively generated algebras}). In particular, $M_1(\R)=\R$ is an \MPG-algebra.
\end{Definition}

Note that similarity can preserve the standard order on real matrices only in very few cases. 
\begin{Theorem}[Minc~{\cite{Minc74}}] \label{conjugationnonnegative}
	All automorphisms of $M_n(\R)$ preserving element-wise order are precisely similarities by nonnegative nonsingular monomial matrices.
\end{Theorem}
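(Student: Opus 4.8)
The plan is to characterize all $\R$-algebra automorphisms $\varphi$ of $M_n(\R)$ that are \emph{order automorphisms}, i.e. satisfy $\varphi(A)\geq O$ if and only if $A\geq O$. By the Skolem–Noether theorem every such $\varphi$ has the form $\varphi(X)=C^{-1}XC$ for some nonsingular $C\in M_n(\R)$, so the whole question reduces to determining which conjugations preserve the cone $M_n(\R^+)$ of nonnegative matrices in both directions. First I would record the elementary reformulation: $\varphi$ is an order automorphism if and only if $\varphi$ maps the cone $M_n(\R^+)$ \emph{onto} itself (a bijective cone-preserving linear map whose inverse also preserves the cone), since $A\geq B$ is just $A-B\in M_n(\R^+)$.

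Next I would exploit the extreme structure of the cone. The matrix units $E_{ij}$ are exactly the generators of the extreme rays of $M_n(\R^+)$, and a linear automorphism carrying the cone onto itself must permute the extreme rays; hence for each pair $(i,j)$ there are a pair $(\sigma(i,j),\tau(i,j))$ and a positive scalar $c_{ij}>0$ with $\varphi(E_{ij})=c_{ij}E_{\sigma(i,j)\tau(i,j)}$. Now I would use multiplicativity of $\varphi$ to pin down this permutation of index pairs. From $E_{ii}E_{ii}=E_{ii}$ the diagonal idempotents go to (scaled, hence since idempotent, unscaled) rank-one idempotents, which forces $\varphi(E_{ii})=E_{\pi(i)\pi(i)}$ for a permutation $\pi$ of $\{1,\dots,n\}$ (using that the $E_{ii}$ are orthogonal and sum to $I$, and $\varphi(I)=I$). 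Then $E_{ii}E_{ij}=E_{ij}=E_{ij}E_{jj}$ propagates this: $\varphi(E_{ij})=c_{ij}E_{\pi(i)\pi(j)}$ for all $i,j$, with $c_{ii}=1$. Finally $E_{ij}E_{jk}=E_{ik}$ gives the cocycle relation $c_{ij}c_{jk}=c_{ik}$, whose general solution is $c_{ij}=d_i/d_j$ for some positive reals $d_1,\dots,d_n$. Assembling, $\varphi(X)=P^{-1}D^{-1}XDP$ where $P$ is the permutation matrix of $\pi$ and $D=\diag\{d_1,\dots,d_n\}$; that is, conjugation by the nonnegative nonsingular monomial matrix $C=DP$. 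The converse direction is immediate: conjugation by a nonnegative monomial matrix visibly sends $M_n(\R^+)$ onto itself, since $C^{-1}$ is again a nonnegative monomial matrix and the product of three nonnegative matrices is nonnegative.

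The main obstacle is the step identifying the images of the matrix units, specifically justifying that $\varphi$ must permute the extreme rays of the cone: this needs the observation that $\varphi$, being a linear \emph{bijection} with $\varphi(M_n(\R^+))=M_n(\R^+)$, carries extreme rays to extreme rays, together with the (standard but worth stating) fact that the extreme rays of $M_n(\R^+)$ are precisely $\R^+E_{ij}$. One clean way to see the latter is that the rank-one nonnegative matrices $uv^{\mathsf T}$ with $u,v\geq 0$ are the only matrices in the cone not expressible as a sum of two cone elements with different supports, and among these the ones with a single nonzero entry are the non-decomposable ones. After that, everything is forced by the multiplicative relations among the $E_{ij}$ and no further analytic input is required; the remaining computations (solving $c_{ij}c_{jk}=c_{ik}$, recognizing $DP$) are routine.
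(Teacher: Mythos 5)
The paper does not prove Theorem \ref{conjugationnonnegative}: it is stated and used as a cited result of Minc \cite{Minc74}, so there is no in-text proof to compare against. Your proposal is nonetheless correct. Skolem--Noether makes $\varphi$ a conjugation $X\mapsto C^{-1}XC$; the extreme rays of the order cone $M_n(\R^+)$ are exactly the coordinate rays $\R^+E_{ij}$; a linear bijection mapping the cone onto itself permutes its extreme rays, so $\varphi(E_{ij})=c_{ij}E_{\sigma(i,j)\tau(i,j)}$ with $c_{ij}>0$; and the multiplicative relations $E_{ii}^2=E_{ii}$, $E_{ii}E_{ij}=E_{ij}=E_{ij}E_{jj}$, $E_{ij}E_{jk}=E_{ik}$ successively force $\varphi(E_{ii})=E_{\pi(i)\pi(i)}$ for a permutation $\pi$, then $\varphi(E_{ij})=c_{ij}E_{\pi(i)\pi(j)}$, then the cocycle $c_{ij}c_{jk}=c_{ik}$ with $c_{ii}=1$, solved by $c_{ij}=d_i/d_j$ (take $d_i=c_{i1}$). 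This exhibits $\varphi$ as conjugation by a nonnegative nonsingular monomial matrix; the precise assembly ``$C=DP$'' has the direction of $\pi$ and the exponents on the $d_i$ slightly scrambled, but that is bookkeeping, and the converse is immediate as you say.

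Two small points worth tightening. The identification of the extreme rays should not go through rank-one matrices: $M_n(\R^+)$ is simply the positive orthant of $\R^{n^2}$, and if a nonnegative $A$ has two nonzero entries then $A=a_{ij}E_{ij}+(A-a_{ij}E_{ij})$ splits it nontrivially in the cone, so only the $E_{ij}$ are extreme; the rank-one criterion you sketch is not the definition of extreme and is more elaborate than needed. Also, the statement says ``preserving order'' while you assume the a priori stronger property of being an order \emph{automorphism} (both $\varphi$ and $\varphi^{-1}$ preserve the cone). For an algebra automorphism these coincide, and recording why gives an even shorter proof of the whole theorem: if $\varphi(X)=C^{-1}XC$ merely preserves the cone, then $\varphi(E_{ij})=(C^{-1}e_i)(C^{\mathsf T}e_j)^{\mathsf T}\geq O$ for all $i,j$ forces every column of $C^{-1}$ and every row of $C$ to have a single common sign; replacing $C$ by $-C$ if necessary, both $C$ and $C^{-1}$ are nonnegative, and a nonnegative nonsingular matrix with nonnegative inverse is monomial. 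That direct computation subsumes the extreme-ray step; your route is equally valid and arguably more conceptual, but one sentence reconciling ``order preserving'' with ``order automorphism'' would make the proposal complete as a proof of the stated theorem.
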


\subsection{Some technical lemmas}

Here we prove some auxiliary assertions.

\begin{Notation} \label{onlynotation}
Assume that $n\geq 2$. Denote by ${\bf K_n}$ or ${\bf K}$ the following $n\times n$ matrices over $\R$. For $n\geq 3$,\\ {\footnotesize$$\KKK_n^{-1}=\left(\begin{array}{ccccc}
	-1& -1&\ldots &-1 & 1\\
	1& 0&\ldots &0& 1 \\
	0& 1 & \ddots &\vdots &\vdots \\
	\vdots&\vdots&\ddots&0&1\\
	0&0&\ldots&1&1 
	\end{array}\right),~~\KKK_n=\frac{1}{n}\left(\begin{array}{ccccc}
	-1& n-1&-1 &\ldots & -1\\
	-1& -1&n-1 &\ldots& -1 \\
	\vdots& \vdots & \vdots &\ddots &\vdots \\
	-1&-1&-1&\ldots&n-1\\
	1&1&1&\ldots&1 
	\end{array}\right).$$
	$$
	\KKK^{-1}_2=\left(\begin{array}{cc}
	-1& 1\\
	1& 1 
	\end{array}\right),~~\KKK_2=\frac{1}{2}\left(\begin{array}{cc}
	-1& 1\\
	1& 1 
	\end{array}\right).
	$$}
\end{Notation}
The direct calculations show that $\KKK^{-1}_n\KKK_n=I_n$.

\begin{Lemma} \label{idempotentsimilar}
For $\KKK,E_{nn},\1\in M_n(\R)$, we have the identity
${\KKK^{-1}E_{nn}\KKK=\frac{1}{n}\1.}$
Moreover, if $E_{ll}$ is any diagonal matrix unit, then there exists a nonsingular matrix $C\in M_n(\R)$ such that ${C^{-1}E_{ll}C=\frac{1}{n}\1.}$

\end{Lemma}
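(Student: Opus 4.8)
The plan is to prove the explicit identity by a short direct computation, reading the product $\KKK^{-1}E_{nn}\KKK$ column by column, and then to deduce the general diagonal case by a permutation conjugation. First I would invoke the remark just before the lemma, $\KKK^{-1}\KKK=I_n$, so that $\KKK$ is invertible and the conjugation makes sense. Then, for each index $j$, since left-multiplication by $E_{nn}$ kills all but the last coordinate of a column vector, one has $\col_j(E_{nn}\KKK)=(\KKK)_{nj}\,\col_n(I_n)$; and by Notation \ref{onlynotation} the last row of $\KKK$ is $\frac1n(1,\ldots,1)$, so $(\KKK)_{nj}=\frac1n$ for every $j$. Hence $\col_j(\KKK^{-1}E_{nn}\KKK)=\frac1n\,\KKK^{-1}\col_n(I_n)=\frac1n\,\col_n(\KKK^{-1})$, and, reading off $\KKK^{-1}$ from Notation \ref{onlynotation} again, $\col_n(\KKK^{-1})$ is the all-ones column. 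Thus every column of $\KKK^{-1}E_{nn}\KKK$ equals $\frac1n(1,\ldots,1)$ as a column, i.e. $\KKK^{-1}E_{nn}\KKK=\frac1n\1$; the degenerate case $n=2$ is checked identically from the displayed $2\times2$ matrices. (Equivalently one can verify $E_{nn}\KKK=\KKK\cdot\frac1n\1$ entrywise, using that the $i$-th row of $\KKK$ sums to $0$ for $i<n$ and to $1$ for $i=n$.)

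For the ``moreover'' part, I would choose a permutation matrix $P$ realizing the transposition of the indices $l$ and $n$ (taking $P=I_n$ when $l=n$), so that $P^{-1}E_{ll}P=E_{nn}$, and set $C=P\KKK$. Then $C$ is nonsingular as a product of nonsingular matrices, and $C^{-1}E_{ll}C=\KKK^{-1}(P^{-1}E_{ll}P)\KKK=\KKK^{-1}E_{nn}\KKK=\frac1n\1$. One could instead argue abstractly that $E_{ll}$ and $\frac1n\1$ are both rank-one idempotents of trace $1$ (note $(\frac1n\1)^2=\frac1n\1$) and hence similar to $E_{11}$, therefore to each other; but since the purpose of the lemma is to have the conjugator in hand for later use, I would record the explicit $C=P\KKK$.

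The main obstacle is, honestly, negligible: the whole argument is bookkeeping with the rows of $\KKK$ and the columns of $\KKK^{-1}$. The only point that demands a little care is keeping the conventions of Notation \ref{onlynotation} straight --- namely that the last row of $\KKK$ is $\frac1n(1,\ldots,1)$ and the last column of $\KKK^{-1}$ is the all-ones vector --- after which the identity drops out immediately, and the permutation trick disposes of an arbitrary $E_{ll}$ with no extra work.
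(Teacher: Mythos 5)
Your proof is correct. For the identity $\KKK^{-1}E_{nn}\KKK=\frac{1}{n}\1$ you simply spell out the ``direct calculation'' that the paper asserts without detail, reading $E_{nn}\KKK$ column by column and using that the last row of $\KKK$ is $\frac{1}{n}(1,\ldots,1)$ and the last column of $\KKK^{-1}$ is the all-ones vector; this is the same computation. For the ``moreover'' clause your main route differs mildly from the paper's: the paper observes that $E_{ll}$ and $\frac{1}{n}\1$ are both rank-one idempotents, so $E_{ll}$ is a real Jordan form of $\frac{1}{n}\1$ and a real conjugator exists, whereas you construct $C$ explicitly as $C=P\KKK$ with $P$ the transposition of indices $l$ and $n$, so that $C^{-1}E_{ll}C=\KKK^{-1}E_{nn}\KKK$. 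Both are valid; your version has the modest advantage of producing the conjugator in closed form, which is in the same spirit as the paper's later use of $\KKK$ itself (e.g.\ in Lemma~\ref{horrortables}), while the paper's abstract argument is shorter and emphasizes the conceptual reason the two idempotents are similar. You also note the abstract alternative yourself, so nothing from the paper's reasoning is missing.
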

\begin{proof}
	The first part is proved by the direct calculations. The second part follows from the fact that $\frac{1}{n}\1$ and $E_{ll}$ are idempotents of rank $1$. So, $E_{ll}$ is simply a Jordan normal form of $\frac{1}{n}\1$. Since $E_{ll}$ is a real matrix, the transition matrix $C$ also can be chosen real.
\end{proof}

\begin{Lemma}\label{importantlemma}
	Let $A\in M_n(\R)$, $n\geq 1$, $A=P\oplus Q\oplus R$, where $P\in M_{p}(\R)$, $Q\in M_{q}(\R)$, $R\in M_r(\R)$, $p\geq 0$, $r\geq 0$, $q\geq1$. The equalities $p=0$, $r=0$ mean that the corresponding direct summands are absent. Let $\sigma(Q)\cap(\sigma(P)\cup\sigma(R))=\varnothing$. Then there exists a polynomial $h(x)\in\R[x]$ such that $h(A)=O_p\oplus I_q\oplus O_r$.
	\begin{proof}
		 Let $\mu_{P}(x),\mu_{R}(x)\in\R[x]$ be minimal polynomials of the matrices $P,R$, respectively. Let $f(x)=\mu_{P}(x)\cdot\mu_{R}(x)$. The matrix $f(Q)$ is nonsingular, since $\sigma(Q)\cap(\sigma(P)\cup\sigma(R))=\varnothing$. Applying the Cayley–Hamilton theorem we find a polynomial $g(x)\in\R[x]$ with $g(f(Q))=I_q$ and $g(0)=0$. Choose $h(x)=g(f(x))$.
	\end{proof} 
\end{Lemma}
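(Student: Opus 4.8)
The plan is to use that $A$ is block diagonal, so that $h(A) = h(P) \oplus h(Q) \oplus h(R)$ for every $h \in \R[x]$; the lemma thus reduces to exhibiting one real polynomial $h$ with $h(P) = O_p$, $h(R) = O_r$ and $h(Q) = I_q$ simultaneously. Phrased through the minimal polynomials $\mu_P, \mu_Q, \mu_R \in \R[x]$ of $P, Q, R$ (with the convention $\mu_P = 1$ when $p = 0$, and likewise $\mu_R = 1$ when $r = 0$), this asks for $h \equiv 0 \pmod{\mu_P \mu_R}$ and $h \equiv 1 \pmod{\mu_Q}$. The only place the hypotheses enter is the observation that the complex roots of $\mu_Q$ lie in $\sigma(Q)$ while those of $\mu_P \mu_R$ lie in $\sigma(P) \cup \sigma(R)$, so the disjointness $\sigma(Q) \cap (\sigma(P) \cup \sigma(R)) = \varnothing$ forces $\gcd(\mu_Q, \mu_P \mu_R) = 1$ in $\R[x]$; the Chinese Remainder Theorem over $\R[x]$ then produces such an $h$ at once.

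The more self-contained route I would actually write out avoids invoking the Chinese Remainder Theorem. Put $f(x) = \mu_P(x)\mu_R(x) \in \R[x]$, so that $f(P) = O_p$ and $f(R) = O_r$ for free, while $f(Q)$ is invertible because $f$ vanishes nowhere on $\sigma(Q)$. The key step is then to invert $f(Q)$ \emph{by a polynomial with vanishing constant term}: applying the Cayley--Hamilton theorem to the $q \times q$ matrix $B := f(Q)$, whose characteristic polynomial $\chi(t) = t^q + c_{q-1}t^{q-1} + \cdots + c_1 t + c_0$ has $c_0 = \pm\det B \neq 0$, one rearranges $\chi(B) = O$ into $I_q = g(B)$ with $g(y) = -c_0^{-1}(y^q + c_{q-1}y^{q-1} + \cdots + c_1 y)$, and $g(0) = 0$. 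Setting $h := g \circ f \in \R[x]$ gives $h(Q) = g(f(Q)) = I_q$, while $h(P) = g(f(P)) = g(O_p) = O_p$ and $h(R) = g(O_r) = O_r$ \emph{precisely because} $g$ has no constant term; hence $h(A) = O_p \oplus I_q \oplus O_r$, and $h$ is real since $\mu_P, \mu_R$ and $\chi$ are.

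I do not expect a genuine obstacle, as the statement is elementary, but the one point that must not be glossed over is exactly this last emphasis: the inverse of $f(Q)$ has to be realized as a polynomial in $f(Q)$ \emph{without constant term}, since that is what annihilates the $P$- and $R$-blocks after composition with $f$. Everything else is bookkeeping: the spectral disjointness is used only to make $f(Q)$ invertible, reality of $h$ is automatic from the reality of minimal and characteristic polynomials of real matrices, and the degenerate cases $p = 0$ and/or $r = 0$ are absorbed by the convention that an absent block contributes the factor $1$ to $f$.
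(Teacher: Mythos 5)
Your second, ``self-contained'' route is exactly the paper's proof: set $f = \mu_P\mu_R$, observe $f(Q)$ is invertible by spectral disjointness, use Cayley--Hamilton to produce $g$ with $g(f(Q))=I_q$ and $g(0)=0$, and take $h = g\circ f$. Your write-up is correct and, if anything, makes explicit the one detail the paper leaves implicit (that $g(0)=0$ is what annihilates the $P$- and $R$-blocks); the CRT remark at the start is a valid alternative but not what the paper does.
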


\section{Positive generating systems up to similarity}\label{3}

This section deals with algebras that are positively generated up to si\-mi\-la\-ri\-ty. Theorem \ref{maintheorem} gives a full description of such algebras. Then we obtain characterizations in commutative (Theorem \ref{commutativematrices}), simple, and semisimple cases (Theorem \ref{semis_theorem}). First the following lemma on one-generated algebras is necessary.

\begin{Lemma} \label{onelemma}
	Let $A\in M_n(\R)$ have a simple real eigenvalue. Then $\gen{A}$ is an \MPG-algebra up to similarity.
\end{Lemma}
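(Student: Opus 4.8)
The plan is to reduce, up to similarity, to the case where the simple real eigenvalue $\lambda$ of $A$ is a $1\times1$ direct summand, to use Lemmas~\ref{importantlemma} and~\ref{idempotentsimilar} to place a copy of $\frac{1}{n}\1$ (hence of $\1$) inside $\gen{A}$ after a similarity, and then to build a single positive generator by adding a large multiple of $\1$. If $A$ is scalar, then simplicity of $\lambda$ forces $n=1$, so $\gen{A}=M_1(\R)=\R$ is an \MPG-algebra by Definition~\ref{maindef}. So I may assume $n\geq 2$ and $A$ not scalar; then $\gen{A}$ is generated by $A$ but not by $\varnothing$, so $\mgen(\gen{A})=1$. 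Since similarity is an automorphism of $M_n(\R)$ it carries generating systems to generating systems bijectively, hence preserves $\mgen$; therefore it suffices to produce one nonsingular $C$ for which $C^{-1}\gen{A}C$ is generated by a single positive matrix.

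Because $\lambda$ has algebraic multiplicity $1$, $A$ is similar over $\R$ to $(\lambda)\oplus A'$ with $A'\in M_{n-1}(\R)$ and $\lambda\notin\sigma(A')$; I replace $A$ by this representative. Applying Lemma~\ref{importantlemma} with no first block, middle block $Q=(\lambda)$ and last block $R=A'$ (the hypothesis $\sigma(Q)\cap\sigma(R)=\varnothing$ holds), I obtain a polynomial $h\in\R[x]$ with $h(A)=(1)\oplus O_{n-1}=E_{11}$, so the rank-one idempotent $E_{11}$ lies in $\gen{A}$. By the second part of Lemma~\ref{idempotentsimilar} there is a nonsingular $C\in M_n(\R)$ with $C^{-1}E_{11}C=\frac{1}{n}\1$. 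Writing $\h A=C^{-1}AC$, I get $\frac{1}{n}\1=C^{-1}h(A)C=h(\h A)\in\gen{\h A}$, hence $\1\in\gen{\h A}$.

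Now set $B=\h A+s\1$ with $s>\|\h A\|$, so that every entry of $B$ is positive, i.e. $B>O$; moreover $B=\h A+sn\,h(\h A)\in\gen{\h A}$, which gives $\gen{B}\subseteq\gen{\h A}$. For the reverse inclusion I pass to coordinates in which $\h A$ becomes $(\lambda)\oplus A'$; there $h(\h A)$ becomes $h((\lambda)\oplus A')=(1)\oplus O_{n-1}$, so $B$ becomes $(\lambda+sn)\oplus A'$. Choosing $s$ large and, in addition, outside the finitely many values for which $\lambda+sn\in\sigma(A')$, the polynomials $x-\lambda-sn$ and the minimal polynomial of $A'$ are coprime, so a Chinese-remainder argument shows that $\gen{(c)\oplus A'}=\{(t)\oplus N:t\in\R,\ N\in\R[A']\}$ for every $c\notin\sigma(A')$; in particular $\gen{(\lambda+sn)\oplus A'}=\gen{(\lambda)\oplus A'}$. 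Translating back through $C$ and the splitting similarity, $\gen{B}=\gen{\h A}$.

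It then follows that $C^{-1}\gen{A}C=\gen{\h A}=\gen{B}$ is generated by the single positive matrix $B$, and $\mgen(\gen{\h A})=\mgen(\gen{A})=1$, so $\{B\}$ is a minimal generating system consisting of positive matrices; hence $\gen{\h A}$ is an \MPG-algebra and $\gen{A}$ is an \MPG-algebra up to similarity. I expect the delicate point to be the reverse inclusion $\gen{\h A}\subseteq\gen{B}$: one must guarantee that adding $s\1$ neither changes the algebra nor drops its dimension, which is exactly why $s$ has to be chosen so that $\lambda+sn$ stays off $\sigma(A')$ (and simultaneously large enough to force positivity).
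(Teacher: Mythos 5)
Your proof is correct and follows essentially the same route as the paper's: reduce to real Jordan form, use Lemma~\ref{importantlemma} and Lemma~\ref{idempotentsimilar} to produce a positive rank-one idempotent in a conjugate of $\gen{A}$, and perturb $A$ by a large multiple of that idempotent while controlling the new eigenvalue to stay off $\sigma(A')$. The only variation is in verifying $\gen{B}=\gen{A}$: the paper re-invokes Lemma~\ref{importantlemma} on $B$, while you run a direct Chinese-remainder argument --- morally the same step, since Lemma~\ref{importantlemma} is itself a CRT-type polynomial-interpolation statement.
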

\begin{proof}

	We may assume $A$ to equal its real Jordan normal form with an eigenvalue $\lambda_0$ of algebraic multiplicity $1$  in the position $(1,1)$ without loss of generality. Applying Lemma \ref{importantlemma} we find a polynomial $f(x)\in\R[x]$ such that $f(A)=E_{11}$. By Lemma \ref{idempotentsimilar}, there exists $C\in M_n(\R)$ with $C^{-1}E_{11} C=\frac{1}{n}\1$. Consider a matrix $\w{B}=C^{-1} B C$ with $B=A+\beta E_{11}$, $\beta=\max\{n||C^{-1}A C||+1$, $~2\rho(A)+1\}$. The matrix $\w{B}$ is positive since $\beta>n||C^{-1}A C||$. It remains to prove that $\gen{B}=\gen{A}$. By the construction, $B$ equals its real Jordan normal form with the eigenvalue $\lambda_0+\beta$ in the position $(1,1)$. It has algebraic multiplicity $1$ due to $\beta>2\rho(A)$. By Lemma \ref{importantlemma}, there exists $g(x)\in\R[x]$ such that $g(B)=E_{11}$. Consequently,  $\gen{A}=\gen{B}$.
\end{proof}

Next we prove a lemma on algebras containing a diagonal matrix unit. It is important in order to obtain a special form of positively generated algebras in Theorem \ref{maintheorem} below.

\begin{Lemma} \label{structurallemma}
	Let $\A\subseteq M_n(\R)$ be a unital algebra, $E_{11}\in\A$. Then there exist a nonsingular matrix $C\in M_n(\R)$, integers $k_1,k_2,k_3\geq 0$, subalgebras $\B_j\subseteq M_{k_j}(\R)$ such that
	$$
	C^{-1}\A C=\left(\begin{array}{ccc}
	\B_1 & * & * \\
	O & \B_2 & *\\
	O& O& \B_3 
	\end{array}\right).
	$$
	If $k_j=0$ for some $j\in\{1,2,3\}$, then the corresponding block $\B_j$  is absent. Also one can find $j_0\in\{1,2,3\}$ with the conditions $\B_{j_0}=M_{k_{j_0}}(\R)$, $k_{j_0}\geq 1$. Moreover, $E_{ll}\in C^{-1}\A C$ for $l=1+\sum_{j=1}^{j_0-1}k_j$ (the sum over the empty set indicates $0$).
	
\end{Lemma}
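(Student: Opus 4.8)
The plan is to exploit the idempotent $E_{11}\in\A$ to perform a Peirce-type decomposition of the ambient space $\R^n$ and then iterate. First I would observe that $E_{11}$ is an idempotent of rank $1$ lying in $\A$; write $\R^n = V_1 \oplus V_0$ with $V_1 = \mathrm{im}\, E_{11}$ (one-dimensional) and $V_0 = \ker E_{11}$. The subspace $\A E_{11}$ is a left $\A$-submodule contained in the column space generated by $\col_1$, and $E_{11}\A$ is a right $\A$-submodule; together with $E_{11}\A E_{11}$ these give a coarse block structure. More precisely, I would look at the smallest $\A$-invariant subspace $W$ containing $V_1 = \mathrm{im}\,E_{11}$, i.e. $W = \A \cdot (\mathrm{im}\,E_{11})$, and at the largest $\A$-invariant subspace $U$ annihilated on the left by $E_{11}$ together with its complementary flag. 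Choosing a basis adapted to the flag $U \subseteq \ker E_{11} \subseteq (\text{preimage structure})$ should put $C^{-1}\A C$ into the claimed $3\times 3$ block-upper-triangular shape with diagonal blocks $\B_1,\B_2,\B_3$; the key point is that $E_{11}$, being rank one, forces exactly one of the three "diagonal" pieces to see the $(1,1)$ entry nontrivially, and on that piece the compression of $\A$ acts irreducibly with a rank-one element, so by the Generalized Burnside theorem (Theorem \ref{burnside}) that block is a full matrix algebra $M_{k_{j_0}}(\R)$.

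In more detail, the middle step I would carry out is: let $j_0$ index the diagonal block onto which $E_{11}$ compresses to a nonzero (hence rank-one idempotent) element of $\B_{j_0}$. I claim $\B_{j_0}$ is irreducible: any proper invariant subspace would refine the flag and contradict maximality/minimality in the choice of $C$ (equivalently, one builds $C$ precisely so that the diagonal blocks are the "irreducible constituents" of $\A$ acting on the subquotients of a suitable composition series of $\R^n$ chosen so that the composition factor containing $\mathrm{im}\,E_{11}$ is a single block). Since $\B_{j_0}$ is irreducible and contains a rank-one matrix (the compression of $E_{11}$), Theorem \ref{burnside} gives $\B_{j_0} = M_{k_{j_0}}(\R)$, with $k_{j_0}\geq 1$ because $\mathrm{im}\,E_{11}$ is nonzero. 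Finally, since $M_{k_{j_0}}(\R)$ contains all matrix units, in particular it contains the diagonal matrix unit $E_{ll}$ in the position corresponding to the first coordinate of that block, namely $l = 1 + \sum_{j=1}^{j_0-1} k_j$; pulling back, $E_{ll}\in C^{-1}\A C$ as asserted. (Strictly, one should also note $E_{ll}$ need not a priori be the image of $E_{11}$ under $C$, but it lies in the block $\B_{j_0} = M_{k_{j_0}}(\R)$, so it is automatically in the algebra.)

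The main obstacle I expect is the careful construction of the conjugating matrix $C$ so that the three diagonal blocks genuinely arise as subalgebras $\B_j\subseteq M_{k_j}(\R)$ with the required block-upper-triangular interlocking, and so that exactly one block captures $E_{11}$. The cleanest route is probably to pick a full flag of $\A$-invariant subspaces $0 = U_0 \subsetneq U_1 \subsetneq \cdots \subsetneq U_m = \R^n$ refining the pair $\mathrm{im}\,E_{11} \subseteq \ker E_{11} \subseteq \R^n$ is not directly possible (those two need not be invariant), so instead one takes a composition series for $\R^n$ as an $\A$-module, notes that $E_{11}$ acts as a rank-one idempotent hence acts nontrivially on exactly one composition factor $U_{s}/U_{s-1}$, groups the factors $U_1/U_0,\dots,U_{s-1}/U_{s-2}$ into the "$\B_1$" block, the single factor $U_s/U_{s-1}$ into the "$\B_2$" (this is $j_0$, and it is a full matrix algebra by Burnside), and the remaining factors into "$\B_3$". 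Some of these groups may be empty, which is exactly the $k_j = 0$ caveat. A basis adapted to this grouped composition series yields $C$, and the action of $\A$ on subquotients gives the strict upper-triangular off-diagonal blocks automatically. Verifying that this really reproduces $\A$ (not a larger algebra) up to conjugacy, and that the middle block is precisely $M_{k_{j_0}}(\R)$ rather than a proper irreducible subalgebra, is where Burnside's theorem does the decisive work.
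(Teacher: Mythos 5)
Your overall strategy -- pick a flag of $\A$-invariant subspaces through a composition series, isolate the unique composition factor on which the rank-one idempotent $E_{11}$ acts nontrivially, and apply the Generalized Burnside theorem to identify that diagonal block as a full matrix algebra -- is sound and closely parallels the paper. The paper does not use an arbitrary composition series but a bespoke pair of invariant subspaces: $\Z_1$, the smallest invariant subspace containing $e_1$, and $\Z_2$, the largest invariant subspace all of whose vectors have vanishing first coordinate; the flag $\Z_1\cap\Z_2\subseteq\Z_1\subseteq\R^n$ plays the role of your $U_{s-1}\subseteq U_s\subseteq\R^n$. Irreducibility of the middle block is then argued from minimality of $\Z_1$ and maximality of $\Z_2$, whereas for you it is immediate because $U_s/U_{s-1}$ is a composition factor. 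Both routes are legitimate, and yours is perhaps slightly slicker for this part.

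There is, however, a genuine gap in your final step. You write that since $\B_{j_0}=M_{k_{j_0}}(\R)$, the matrix unit $E_{ll}$ ``lies in the block $\B_{j_0}$, so it is automatically in the algebra.'' This is false as an inference: $\B_{j_0}=M_{k_{j_0}}(\R)$ only says that the \emph{compression} of $C^{-1}\A C$ to the $j_0$-th diagonal block is all of $M_{k_{j_0}}(\R)$; it does not say that the block-diagonal matrix $O_{k_1}\oplus E_{11}\oplus O_{k_3}$ (which is $E_{ll}$ globally) lies in $C^{-1}\A C$. A concrete counterexample to the inference: take $n=2$, $\A=\lin{I_2,E_{11}}$, composition series $\{0\}\subsetneq\lin{e_1}\subsetneq\R^2$, and the adapted basis $g_1=e_1$, $g_2=e_1+e_2$, i.e.\ $C=\left(\begin{smallmatrix}1&1\\0&1\end{smallmatrix}\right)$. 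Then $C^{-1}\A C=\lin{I_2,\,E_{11}+E_{12}}$, which has $\B_1=M_1(\R)$ but does not contain $E_{11}$. The way to actually secure $E_{ll}\in C^{-1}\A C$, as the paper does, is to choose the basis so that $C^{-1}E_{11}C=E_{ll}$ \emph{on the nose}: put $e_1$ itself as the $l$-th basis vector (it lies in $U_s\setminus U_{s-1}$ because $E_{11}$ acts nontrivially on $U_s/U_{s-1}$ and $\A$-invariance gives $e_1\in U_s$), and then replace each other basis vector $g_i$ by $g_i-(g_i)_1e_1$ so that $\row_1(C)=e_l^T$ as well as $\col_l(C)=e_1$; this adjustment preserves the flag since $e_1\in U_j$ for all $j\geq s$. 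With that choice $C^{-1}E_{11}C=(C^{-1}e_1)(e_1^T C)=e_l e_l^T=E_{ll}$, and the conclusion follows.
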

\begin{proof}
	Consider $\A$ as an algebra of operators on $\R^n$.  Denote by $\{e_i\}_{i=1}^n$ the standard basis of $\R^n$, i.e. $(e_i)_j=\delta_{ij}$. Let $Pr_{\W}(\cdot)$ be the projection operator onto a subspace $\W$. Introduce
		$$\Psi_1=\{\V\subseteq\R^n~|~\V=\lin{\V},~ \A\V\subseteq\V,~e_1\in\V\},$$
		$$\Psi_2=\{\V\subseteq\R^n~|~\V=\lin{\V},~ \A\V\subseteq\V,~Pr_{\V}(e_1)=0\}.$$

We have $\R^n\in\Psi_1,\{0\}\notin\Psi_1$ and $\{0\}\in\Psi_2,\R^n\notin\Psi_2$. Note that $\Psi_1=\{\V\subseteq\R^n~|~\V=\lin{\V},~ \A\V\subseteq\V,~Pr_{\V}(e_1)\neq 0\}$ since $E_{11}\in\A$. So, $\Psi_1\cup\Psi_2$ contains all invariant subspaces of $\A$. Introduce

$$\Z_1=\bigcap\limits_{\V\in\Psi_1}\V=\min\limits_{\subseteq}~\Psi_1,$$
$$\Z_2=\sum\limits_{\V\in\Psi_2}\V=\max\limits_{\subseteq}~\Psi_2.$$

	There are four possibilities.
	\begin{enumerate}
		\item[Case 1.] $\Z_1\neq\R^n$, $\Z_1\cap\Z_2\neq\{0\}$. Let $k_1={\dim\Z_1\cap\Z_2}$,  $k_2=\dim\Z_1-{\dim\Z_1\cap\Z_2}$, $k_3=n-k_1-k_2$. Choose a basis $ {(}g_1, \ldots, g_{k_1}, g_{k_1+1},\ldots, \\g_{k_1+k_2}, g_{k_1+k_2+1},\ldots, g_n{)}$ of $\R^n$ where ${(}g_1,\ldots,g_{k_1}{)}$ is a basis of $\Z_1\cap\Z_2$, \\${(}g_1,\ldots,g_{k_1+k_2}{)}$ is a basis of $\Z_1$, $g_{k_1+1}=e_1$. In this basis
$$
\h{\A}=C^{-1}\A C=\left(\begin{array}{ccc}
\B_1 & * & * \\
O & \B_2 & *\\
O& O& \B_3 
\end{array}\right).
$$
Prove that $\B_2$ is irreducible. Assume the opposite. Let $\B_2$ have a non-trivial invariant subspace $\V'\subseteq\lin{e_{k_1+1},\ldots,e_{k_1+k_2}}$. Then $\V=\lin{g_1,\ldots, g_{k_1}}\oplus\V'$ is an invariant subspace of $\A$. Consider two possi\-bi\-li\-ties: either $Pr_{\V'}(g_{k+1})\neq0$ or $Pr_{\V'}(g_{k+1})=0$. Then we have a contradiction either with the minimality of $\Z_1$ or with $\lin{g_{k_1+1},\ldots,g_{k_1+k_2}}\cap\Z_2=\{0\}$, respectively. So, $\B_2$ is irreducible. Also the equalities $\col_{k+1}(C)=e_1$, $\col_1(C^{-1})=e_{k+1}$ imply $E_{(k+1)(k+1)}=C^{-1}E_{11}C\in\h{\A}$. Therefore $\B_2=M_{k_2}(\R)$ by Theorem \ref{burnside}.
	
	\item[Case 2.] $\Z_1\neq\R^n$, $\Z_2\cap \Z_1=\{0\}$. 
	Denote $k=\dim~\Z_1$. So one can choose a basis $(g_1,g_2,\ldots,g_{k},g_{k+1},\ldots,g_n)$ of $\R^n$ such that $g_1=e_1$, $(g_1,\ldots,g_{k})$ is a basis of $\Z_1$. In this basis $\A$ has the form
	$$\h{\A}=C^{-1}\A C = \left(\begin{array}{cc}
	\CC & * \\
	O & \B
	\end{array}\right).$$
	Note that $\CC$ is irreducible, otherwise we have a contradiction either with the minimality of $\Z_1$ or with $\Z_1\cap\Z_2=\{0\}$ (the reasoning is as in Case 1.). Theorem \ref{burnside} implies $\CC=M_{k}(\R)$ since $E_{11}\in \h{\A}$.

		\item[Case 3.] $\Z_1=\R^n$, $\Z_2\neq\{0\}$. 
Let $k=\dim\Z_2$. Choose a basis ${(}g_1,g_2,\ldots,g_{k},\\g_{k+1},\ldots,g_n{)}$ of $\R^n$ such that $g_{k+1}=e_1$, $(g_1,\ldots,g_{k})$ is a basis of $\Z_2$. In this basis 
$$\h{\A}=C^{-1}\A C = \left(\begin{array}{cc}
\B & * \\
O & \CC
\end{array}\right).$$
 Then $\CC$ is irreducible, otherwise we have a contradiction either with $\Z_1=\R^n$ or with the maximality of $\Z_2$ (the reasoning is as in Case 1.). Also $E_{(k+1)(k+1)}\in\h{\A}$. Therefore $\CC=M_{k_2}(\R)$ by Theorem \ref{burnside}.

	\item[Case 4.] $\Z_1=\R^n$, $\Z_2=\{0\}$. In this case $\A$ is irreducible and contains $E_{11}$. Hence $\A=M_n(\R)$ by Theorem \ref{burnside}.

	\end{enumerate} 
\end{proof}

\begin{Remark}
Lemma \ref{structurallemma} holds over an arbitrary field.
\end{Remark}

Here we provide a criterion for an algebra to be generated by positive matrices up to similarity. The previous lemma is applied to obtain some canonical form for such algebras. 

\begin{Theorem} \label{maintheorem}
	Let  $\A\subseteq M_n(\R)$ be a unital algebra. Then the following conditions are equivalent.
	\begin{enumerate}

\item $\A$ is a \PG-algebra up to similarity.
\item $\A$ contains a matrix with a simple real eigenvalue.
		\item $\A$ contains an idempotent matrix of rank $1$.

\item There exists a nonsingular matrix $C\in M_n(\R)$ such that $\h{\A}=C^{-1}\A C$ is one of the following types.

\begin{enumerate}
	\item $\h{\A}=M_n(\R)$.
	\item 		$\h{\A} = \left(\begin{array}{cc}
	M_k(\R) & * \\
	O & \B
	\end{array}\right),$ $E_{11}\in \h{\A}$.
	\item $\h{\A} = \left(\begin{array}{cc}
	\B & * \\
	O & M_k(\R)
	\end{array}\right),$ $E_{ll}\in \h{\A}$, $l=n-k+1$.
	\item $\h{\A}=\left(\begin{array}{ccc}
		\B_1 & * & * \\
		O & M_k(\R) & *\\
		O& O& \B_2 
	\end{array}\right)$, $\B_1\subseteq M_p(\R)$, $E_{ll}\in \h{\A}$, $l=p+1$.
\end{enumerate}
	\end{enumerate}
\end{Theorem}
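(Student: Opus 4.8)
The plan is to establish the cycle of implications $1)\Rightarrow 2)\Rightarrow 3)\Rightarrow 4)\Rightarrow 1)$, using the machinery already assembled in the excerpt.

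For $1)\Rightarrow 2)$: if $C^{-1}\A C$ is positively generated, it contains a positive matrix $P$. A positive matrix is irreducible as a nonnegative matrix, so by the Perron--Frobenius theorem its spectral radius $\rho(P)$ is a simple eigenvalue, and it is real and positive. Hence $P$ itself is a matrix in $C^{-1}\A C$ with a simple real eigenvalue; conjugating back, $CPC^{-1}\in\A$ has the same property. (In fact any positive matrix works, so this direction is immediate.)

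For $2)\Rightarrow 3)$: let $A\in\A$ have a simple real eigenvalue $\lambda_0$. Write $A$ in real Jordan form; the block structure splits as $A\sim (\lambda_0)\oplus Q$ where $\lambda_0\notin\sigma(Q)$. By Lemma~\ref{importantlemma} there is a polynomial $h$ with $h(A)$ equal (after the conjugation) to $E_{11}\oplus O$, an idempotent of rank $1$; since $\A$ is an algebra, $h(A)\in\A$, and the conjugation back into $\A$ preserves the rank-$1$ idempotent property. For $3)\Rightarrow 4)$: an idempotent $e\in\A$ of rank $1$ is similar to $E_{11}$, so after replacing $\A$ by a conjugate we may assume $E_{11}\in\A$. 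Now apply Lemma~\ref{structurallemma}: it produces (after a further conjugation) the block upper-triangular form with blocks $\B_1,\B_2,\B_3$, one of which, say $\B_{j_0}$, equals a full matrix algebra $M_{k_{j_0}}(\R)$ with $k_{j_0}\ge 1$, and with a diagonal matrix unit $E_{ll}$ in the conjugated algebra for the appropriate $l$. The four sub-cases $j_0\in\{1,2,3\}$ together with the degenerate possibility that all of $\A$ is $M_n(\R)$ correspond exactly to the four listed types $4(a)$--$4(d)$ (with $\B$, $\B_1$, $\B_2$ being the surviving corner blocks).

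For $4)\Rightarrow 1)$, which I expect to be the main obstacle: in each of the four types $\h\A$ contains a diagonal matrix unit $E_{ll}$ (taking $l=1$ in case $4(a)$). By Lemma~\ref{idempotentsimilar} there is a nonsingular $C'$ with $C'^{-1}E_{ll}C' = \tfrac1n\1$, a positive matrix. Thus $C'^{-1}\h\A C'$ is a unital algebra containing a positive matrix, and by Corollary~\ref{simplecriterionpositive} it is positively generated. Composing the conjugations shows $\A$ is a \PG-algebra up to similarity. The delicate point here is that Corollary~\ref{simplecriterionpositive} is exactly what bridges "contains a positive matrix" and "positively generated", so the whole argument reduces to exhibiting, in each canonical form, a rank-$1$ idempotent that is a diagonal matrix unit — and this is precisely what Lemma~\ref{structurallemma} was engineered to deliver. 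One should double-check that in case $4(a)$, $\A=M_n(\R)$, the matrix unit $E_{11}$ is indeed available (it is, trivially), so the same argument applies uniformly.
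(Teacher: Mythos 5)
Your proposal follows the same $1)\Rightarrow 2)\Rightarrow 3)\Rightarrow 4)\Rightarrow 1)$ cycle as the paper, with Perron for $1)\Rightarrow 2)$, Lemma~\ref{importantlemma} for $2)\Rightarrow 3)$, Lemma~\ref{structurallemma} for $3)\Rightarrow 4)$, and Lemma~\ref{idempotentsimilar} together with Corollary~\ref{simplecriterionpositive} for $4)\Rightarrow 1)$; this is essentially identical to the paper's argument.
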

\begin{proof}
	$ $
	\begin{enumerate} 
		\item[$1)\Rightarrow2)$] Apply Corollary \ref{simplecriterionpositive}. Perron's theorem ensures that a positive matrix has a positive eigenvalue with the strictly maximal absolute value (see \cite[p. 526, Theorem 8.2.8]{HornJ13}). This eigenvalue has algebraic mul\-ti\-pli\-ci\-ty~$1$.
		\item[$2)\Rightarrow3)$] Let $A$ be a matrix with a simple real eigenvalue $\lambda$. So, there exists a nonsingular matrix $C$ such that $C^{-1}AC$ equals a real Jordan normal form of $A$. Let $(C^{-1}AC)_{11}=\lambda$ without loss of generality. Applying Lemma \ref{importantlemma} there exists $f\in\R[x]$ such that $E_{11}=f(C^{-1}AC)=C^{-1}f(A)C$. Then $C E_{11}C^{-1}=f(A)\in\A$ is an idempotent of rank $1$.
		\item[$3)\Rightarrow 4)$] Let $E\in\A$ be an idempotent of rank $1$. Then there exists a real nonsingular matrix $C$ that turns $E$ to its Jordan normal form $C^{-1}EC=E_{11}$. It remains to apply Lemma \ref{structurallemma} to the algebra
		 $C^{-1}\A C$.
		\item[$4)\Rightarrow 1)$] Since $E_{ll}\in C^{-1}\A C$ for some $l$, we can find a nonsingular matrix $\h{C}$ such that $\h{C}^{-1}E_{ll}\h{C}>O$ by Lemma \ref{idempotentsimilar}. So, Corollary \ref{simplecriterionpositive} works.
	\end{enumerate}
\end{proof}

However, Theorem \ref{maintheorem} does not cover the algebras with a minimal positive generating system up to similarity. We provide the following example. 

\begin{Example}
	Let $\A\subseteq M_n(\R)$ be a subalgebra, $D_n(\R)\subseteq\A$. Then $\A$ is an \MPG-algebra up to similarity.  Indeed, let $H$ be an arbitrary positive matrix with distinct real eigenvalues. Choose a matrix $C$ such that $CHC^{-1}=D\in D_n(\R)$. If $\mgen(\A)=1$, then $\dim\A\leq n$ by the Cayley–Hamilton theorem. So, $\A=D_n(\R)$, $\gen{D}=\A$, $C^{-1}DC=H>O$. Assume that  $\mgen(\A)\geq 2$, then  $\mgen(\A)=2$ by Theorem \ref{IncGen}. Let $A\in\A$ be a matrix covering  $\A$. Choose $\alpha>0$ large enough that $\alpha H+ C^{-1}AC>O$ and $\Omega(\alpha D+ A)=\Omega(A)=\Omega(\A)$. Then $\gen{\{D,\alpha D+A\}}=\A$ by Theorem \ref{IncGen}. Also $C^{-1}DC=H>O$, $C^{-1}(\alpha D +A)C>O$.
\end{Example}

At the same time every unital matrix algebra can be made an \MPG-algebra if one considers the direct sum of it and the base field $\R$. This fact follows from the next lemma which is essential for the description of commutative \PG-algebras. 

\begin{Lemma} \label{oplus_R}
	Let $\A\subseteq M_n(\R)$ be a unital algebra, $n\geq 2$. Assume that there exists a nonsingular matrix $C$ such that $C^{-1}\A C= \R\oplus\B$ for some matrix algebra $\B\subseteq M_{n-1}(\R)$. Then $\A$ is an \MPG-algebra up to similarity.
\end{Lemma}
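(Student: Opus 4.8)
The plan is to reduce to the case $\A=\R\oplus\B$ --- harmless, since the property of being an \MPG-algebra up to similarity is itself similarity-invariant --- and then to imitate the proof of Lemma~\ref{onelemma}: conjugate so that the idempotent $E_{11}=1\oplus O_{n-1}\in\A$ becomes the positive matrix $\frac{1}{n}\1$, and then add a large multiple of $\frac{1}{n}\1$ to a minimal generating system to make it positive, while checking that this shift does not destroy generation.

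Concretely, I would fix a minimal generating system $\{A_1,\ldots,A_m\}$ of $\A=\R\oplus\B$, so $m=\mgen(\A)\geq 1$. Every element of $\R\oplus\B$ is block diagonal, hence $A_i=a_i\oplus B_i$ with $a_i\in\R$ and $B_i\in\B$; projecting onto the second block shows that $\{B_1,\ldots,B_m\}$ generates $\B$. Using Lemma~\ref{idempotentsimilar} I would pick a nonsingular $\h{C}$ with $\h{C}^{-1}E_{11}\h{C}=\frac{1}{n}\1=:F$, and set $\h{\A}=\h{C}^{-1}\A\h{C}$, $\h{A}_i=\h{C}^{-1}A_i\h{C}$. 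Then $F\in\h{\A}$, $F>O$, and (conjugation being an algebra isomorphism) $\{\h{A}_1,\ldots,\h{A}_m\}$ is a minimal generating system of $\h{\A}$, of size $m=\mgen(\h{\A})$.

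The crucial claim is that for all but finitely many $\beta\in\R$ one has $\gen{A_1+\beta E_{11},\ldots,A_m+\beta E_{11}}=\A$, equivalently $\gen{\h{A}_1+\beta F,\ldots,\h{A}_m+\beta F}=\h{\A}$. To prove it, let $\mu(x)\in\R[x]$ be the minimal polynomial of $B_1$; if $a_1+\beta\notin\sigma(B_1)$ then $\mu(a_1+\beta)\neq 0$, so the polynomial $p(x)=\mu(x)/\mu(a_1+\beta)$ satisfies $p(A_1+\beta E_{11})=p\big((a_1+\beta)\oplus B_1\big)=1\oplus O_{n-1}=E_{11}$, whence $E_{11}\in\gen{A_1+\beta E_{11},\ldots,A_m+\beta E_{11}}$. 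Since $0\oplus B_i=(A_i+\beta E_{11})-(a_i+\beta)E_{11}$ and $0\oplus I_{n-1}=I_n-E_{11}$ also lie in that algebra, and $\{B_i\}$ generates $\B$, the algebra contains $0\oplus\B$ and hence all of $\R E_{11}+(0\oplus\B)=\R\oplus\B=\A$; the reverse inclusion is clear. Thus it only remains to choose $\beta$ avoiding the finitely many real values with $\beta+a_1\in\sigma(B_1)$ and large enough (say $\beta>n\max_i||\h{A}_i||$) that $\tilde{A}_i:=\h{A}_i+\beta F>O$ for all $i$. Then $\{\tilde{A}_1,\ldots,\tilde{A}_m\}\subseteq\h{\A}$ is a positive generating system of $\h{\A}$ of cardinality $m=\mgen(\h{\A})$, so $\h{\A}$ is an \MPG-algebra and $\A$ is an \MPG-algebra up to similarity.

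I expect the main (and only mildly delicate) obstacle to be the ``all but finitely many $\beta$'' step --- specifically, verifying that recovering $E_{11}$ inside the generated algebra, via the single shifted generator $A_1+\beta E_{11}$ and the minimal-polynomial trick, really does force the whole of $\R\oplus\B$; once $E_{11}$ is available everything splits along the direct sum. The remaining ingredients (the conjugation of the rank-one idempotent to $\frac{1}{n}\1$ and the norm estimate making the shifted matrices positive) are exactly as in Lemma~\ref{onelemma}.
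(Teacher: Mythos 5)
Your proof is correct and rests on the same two ideas the paper uses: conjugate the rank-one idempotent $E_{11}=1\oplus O_{n-1}$ to the positive matrix $\frac{1}{n}\1$ via Lemma~\ref{idempotentsimilar}, then shift a minimal generating system to become positive while showing generation is preserved via a minimal-polynomial argument (the content of Lemma~\ref{importantlemma}). The execution differs in two respects, both legitimate. First, the paper starts from a minimal generating system $\{B_i\}$ of $\B$, lifts to $A_1=[\lambda]\oplus B_1$ (with $\lambda\notin\sigma(B_1)$) and $A_i=[0]\oplus B_i$, and then establishes minimality by the count $m\geq\mgen(\A)\geq\mgen(\B)=m$; you instead begin with a minimal generating system of $\A$ and observe (via the projection onto the $\B$-block, a surjective unital homomorphism) that it projects to a generating system of $\B$ --- the same inequality $\mgen(\A)\geq\mgen(\B)$ in disguise, just used at a different point of the argument. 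Second, the paper delegates the treatment of the distinguished generator to Lemma~\ref{onelemma} and then makes the remaining generators positive by adding large multiples of the already-positive $\h{A}_1$, whereas you shift every generator by the same $\beta E_{11}$ (equivalently $\beta F$) and recover $E_{11}$ directly as $p(A_1+\beta E_{11})$ with $p(x)=\mu_{B_1}(x)/\mu_{B_1}(a_1+\beta)$, which is simply an inlined, single-block version of Lemma~\ref{importantlemma}. Your uniform shift is slightly more self-contained and avoids any dependence on the minimum entry of the positive matrix one is adding; the paper's is more modular, reusing Lemma~\ref{onelemma} as a black box. Either way the proof goes through.
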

\begin{proof}
 Assume that $C=I$ without loss of generality. Let $\{B_1,\ldots,B_m\}$ be a minimal generating system of $\B$. Consider $A_1=[\lambda]\oplus B_1$, $\lambda\notin\sigma(B_1)$, $A_i=[0]\oplus B_i$ for $i=2,\ldots,m$. So, $\gen{A_1,\ldots,A_m}=\A$ by Lemma \ref{importantlemma}. Since $m\geq\mgen(\A)\geq\mgen(\B)=m$, we obtain that $\{A_1,\ldots,A_m\}$ is a minimal generating system of $\A$. Applying Lemma \ref{onelemma} one finds $\w{A}_1\in\gen{A_1}$ such that $\gen{\w{A}_1}=\gen{A_1}$ and $\w{A}_1$ is similar to a positive matrix. Thus, $\gen{\w{A}_1,A_2\ldots,A_m}=\gen{A_1,\ldots,A_m}=\A$. There exists $S$ with ${\h{A}_1=S^{-1}\w{A}_1S>O}$. Denote $\h{A}_i=S^{-1}A_iS$ for $i=2,\ldots, m$. Then $\{\h{A}_1\}\cup\{\h{A}_1\frac{||\h{A}_i||+1}{||\h{A}_1||}+\h{A}_i\}_{i=2}^m$ is a positive minimal generating system of $S^{-1}\A S$.
\end{proof}

The following theorem is obtained by applying Theorem \ref{maintheorem} and Lemma \ref{oplus_R} to commutative matrix algebras.

\begin{Theorem} \label{commutativematrices}
	Let $\A\subseteq M_n(\R)$ be a commutative unital matrix algebra, $n\geq 2$. Then the following conditions are equivalent.
	\begin{enumerate}
		\item $\A$ is a \PG-algebra up to similarity.
		\item $\A$ is an \MPG-algebra up to similarity.
		\item There exists a nonsingular matrix $C$ that
		$C^{-1}\A C= \R\oplus\B$ for some matrix algebra $\B\subseteq M_{n-1}(\R)$.
	\end{enumerate}
\end{Theorem}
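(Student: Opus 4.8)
The plan is to prove the cycle of implications $(1)\Rightarrow(3)\Rightarrow(2)\Rightarrow(1)$. Two of the three links are essentially free. The implication $(2)\Rightarrow(1)$ holds because a positive \emph{minimal} generating system is in particular a positive generating system, so every \MPG-algebra up to similarity is automatically a \PG-algebra up to similarity. The implication $(3)\Rightarrow(2)$ is precisely Lemma \ref{oplus_R} (here $n\geq 2$ and $\A$ is unital, as required by that lemma). Hence the whole content of the theorem lies in the implication $(1)\Rightarrow(3)$.

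For $(1)\Rightarrow(3)$ I would start from Theorem \ref{maintheorem}: since $\A$ is a \PG-algebra up to similarity, it contains an idempotent $E$ of rank $1$. The point is that commutativity upgrades $E$ to a \emph{central} idempotent that splits off a copy of $\R$. Indeed, $E$ commutes with every element of $\A$, so both the range of $E$ (which is one-dimensional) and the kernel of $E$ (which has dimension $n-1$) are invariant under $\A$; since $E$ is idempotent these two subspaces are complementary and span $\R^n$. Choosing a nonsingular $C\in M_n(\R)$ whose first column spans the range of $E$ and whose remaining columns span the kernel of $E$, one obtains that $\h{\A}:=C^{-1}\A C$ consists of block-diagonal matrices $[\lambda]\oplus X$ with $\lambda\in\R$ and $X\in M_{n-1}(\R)$, and moreover $C^{-1}EC=E_{11}$.

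It then remains to see that the resulting inclusion $\h{\A}\subseteq\R\oplus M_{n-1}(\R)$ is in fact an equality onto a full direct sum. Let $\B\subseteq M_{n-1}(\R)$ be the image of $\h{\A}$ under the algebra homomorphism $[\lambda]\oplus X\mapsto X$; it is a subalgebra. Since $E_{11}\in\h{\A}$ and $\h{\A}$ is a linear space, $[\mu]\oplus O\in\h{\A}$ for every $\mu\in\R$; and for any $Y=[\lambda]\oplus X\in\h{\A}$ we have $Y-YE_{11}=[0]\oplus X\in\h{\A}$, so $[0]\oplus B\in\h{\A}$ for every $B\in\B$. Adding these gives $\R\oplus\B\subseteq\h{\A}$, and the reverse inclusion is clear, so $\h{\A}=\R\oplus\B$, which is statement $(3)$.

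I do not expect a serious obstacle: everything rests on Theorem \ref{maintheorem} and Lemma \ref{oplus_R}, and the only genuinely new ingredient is the passage from a rank-$1$ idempotent to a \emph{central} rank-$1$ idempotent splitting $\A$ as a direct sum. The one step that deserves care is the last one — verifying that $\h{\A}$ is the full direct sum $\R\oplus\B$ rather than merely a subalgebra of it — which is exactly what the presence of $E_{11}$ as an honest element of $\h{\A}$ secures via the projection trick above.
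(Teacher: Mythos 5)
Your proposal is correct and follows essentially the same route as the paper: the implications $(3)\Rightarrow(2)$ and $(2)\Rightarrow(1)$ are handled identically (Lemma \ref{oplus_R} and the trivial observation), and $(1)\Rightarrow(3)$ rests on Theorem \ref{maintheorem}, just as in the paper. The only cosmetic difference is that the paper invokes item $4$ of Theorem \ref{maintheorem} (the block upper-triangular canonical form) and leaves the commutativity reduction implicit, while you start from item $3$ (the rank-$1$ idempotent) and spell out the central-idempotent splitting explicitly — a welcome elaboration, not a different argument.
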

\begin{proof}
		The equivalence of items $1$ and $3$ follows from item $4$ of Theorem \ref{maintheorem}. Item $3$ implies item $2$ by Lemma \ref{oplus_R}. Also item $1$ clearly follows from item $2$. 
\end{proof}
 
 From the above theorem we immediately obtain the next corollary about one-generated algebras.
 
\begin{Corollary} \label{mpg-one}
	Consider an arbitrary real matrix $A\in M_n(\R)$. Then $\gen{A}$ is a \PG-algebra up to similarity if and only if $\gen{A}$ is an \MPG-algebra up to similarity. Moreover, it is equivalent to the requiring that $A$ has a simple real eigenvalue.
\end{Corollary}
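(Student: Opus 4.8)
The plan is to derive everything from the already-established equivalences and the auxiliary lemmas. First I would reduce the statement to a single chain of implications. By Corollary \ref{mpg-one}'s own phrasing the three conditions to reconcile are: (i) $\gen{A}$ is a \PG-algebra up to similarity; (ii) $\gen{A}$ is an \MPG-algebra up to similarity; (iii) $A$ has a simple real eigenvalue. Since $\gen{A}$ is a commutative unital matrix algebra (it is generated by one matrix together with $I_n$, so all its elements are polynomials in $A$ and hence mutually commute), I can invoke Theorem \ref{commutativematrices} directly, provided $n\geq 2$; the case $n=1$ is trivial because $M_1(\R)=\R$ is an \MPG-algebra, $A=[\lambda]$ always has a simple real eigenvalue, and all three conditions hold vacuously.

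For $n\geq 2$, Theorem \ref{commutativematrices} gives the equivalence of (i), (ii), and the structural condition (iii$'$): there is a nonsingular $C$ with $C^{-1}\gen{A}C=\R\oplus\B$ for some $\B\subseteq M_{n-1}(\R)$. So it remains to show (iii$'$) $\iff$ (iii), i.e. that $\gen{A}$ is similar to an algebra of the form $\R\oplus\B$ if and only if $A$ has a simple real eigenvalue. For the direction (iii) $\Rightarrow$ (iii$'$): if $A$ has a simple real eigenvalue $\lambda_0$, pass to a real matrix $\w A=C^{-1}AC$ in real Jordan form with $(\w A)_{11}=\lambda_0$ and the $(1,1)$ entry a $1\times 1$ Jordan block, so $\w A=[\lambda_0]\oplus A'$ with $\lambda_0\notin\sigma(A')$; by Lemma \ref{importantlemma} there is $h\in\R[x]$ with $h(\w A)=E_{11}=[1]\oplus O$, hence $E_{11}\in\gen{\w A}$, and since every element of $\gen{\w A}$ is a polynomial in $\w A$ it respects this block decomposition, giving $\gen{\w A}\subseteq \R\oplus M_{n-1}(\R)$; more precisely $\gen{\w A}=\R\cdot E_{11}\oplus\gen{A'}$ (the first summand being genuinely $\R$ because $E_{11}$ and $I_n$ together span it), which is of the required form with $\B=\gen{A'}$. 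For the converse (iii$'$) $\Rightarrow$ (iii): if $C^{-1}\gen{A}C=\R\oplus\B$ then $C^{-1}AC=[\mu]\oplus A''$ for some $\mu\in\R$ and $A''\in M_{n-1}(\R)$; since the first summand is all of $\R$, the idempotent $E_{11}=[1]\oplus O$ lies in $C^{-1}\gen{A}C$, so $C^{-1}\gen{A}C$ contains an idempotent of rank $1$ — but actually the cleaner route is just to observe that $\mu$ must be a simple eigenvalue of $C^{-1}AC$: if $\mu\in\sigma(A'')$ as well, then the dimension count via the minimal polynomial shows $\R\cdot([1]\oplus O)$ is not a direct summand of $\gen{[\mu]\oplus A''}$, a contradiction. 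Hence $A$ (being similar to $[\mu]\oplus A''$ with $\mu\notin\sigma(A'')$) has the simple real eigenvalue $\mu$.

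Putting the pieces together: (iii) $\Rightarrow$ (iii$'$) $\Rightarrow$ (ii) $\Rightarrow$ (i) (the last by Definition \ref{maindef}, since every \MPG-algebra is in particular a \PG-algebra), and (i) $\Rightarrow$ (iii$'$) $\Rightarrow$ (iii) closes the loop, where the implications through (iii$'$) are exactly Theorem \ref{commutativematrices}. Alternatively, and perhaps more transparently, one can bypass the structural condition entirely: Lemma \ref{onelemma} gives directly that a simple real eigenvalue implies $\gen{A}$ is an \MPG-algebra up to similarity, hence a \PG-algebra up to similarity; and for the reverse, if $\gen{A}$ is a \PG-algebra up to similarity then by Theorem \ref{maintheorem} (applied to the unital algebra $\gen{A}$) it contains a matrix with a simple real eigenvalue, say $p(A)$ for some $p\in\R[x]$ — and I would then argue that this forces $A$ itself to have a simple real eigenvalue, since an eigenvalue of $p(A)$ of algebraic multiplicity $1$ must be $p(\lambda)$ for a unique eigenvalue $\lambda$ of $A$ which is necessarily real (its non-real conjugate would contribute equally to the multiplicity of $p(\lambda)=p(\bar\lambda)$) and of algebraic multiplicity $1$.

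The main obstacle is the last implication in this second approach: showing that if some polynomial $p(A)$ has a simple real eigenvalue then $A$ has a simple real eigenvalue. The subtlety is purely about how algebraic multiplicities transform under $\lambda\mapsto p(\lambda)$: distinct eigenvalues $\lambda_i$ of $A$ can collide under $p$, and a non-real conjugate pair $\lambda,\bar\lambda$ maps to a conjugate pair $p(\lambda),\overline{p(\lambda)}$, so if $p(\lambda)$ were real it would equal $p(\bar\lambda)$ and thus have even multiplicity coming from the pair, contradicting multiplicity $1$; hence the preimage achieving multiplicity $1$ is a single real $\lambda$ with algebraic multiplicity $1$ in $A$. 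Making this rigorous requires only the standard fact that the algebraic multiplicity of $\nu$ as an eigenvalue of $p(A)$ equals $\sum_{\lambda\in\sigma(A),\,p(\lambda)=\nu}(\text{alg.\ mult.\ of }\lambda\text{ in }A)$, which follows from the Jordan form of $A$, so no serious difficulty remains — but it is the one place where a short genuine argument (rather than a citation) is needed. For safety I would present the proof via Theorem \ref{commutativematrices}, which reduces the whole corollary to quoting prior results plus the elementary block-diagonal bookkeeping of the preceding paragraph.
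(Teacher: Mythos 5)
Your proof is correct and follows essentially the same route as the paper, which simply says the corollary is ``immediately obtained'' from Theorem \ref{commutativematrices}. You rightly notice that the step from ``$\gen{A}$ is similar to $\R\oplus\B$'' (resp.\ ``some $p(A)$ has a simple real eigenvalue'') to ``$A$ has a simple real eigenvalue'' is not quite tautological, and both of your short arguments closing that gap --- via $\mu\notin\sigma(A'')$, or via the multiplicity-transfer under $\lambda\mapsto p(\lambda)$ using closure of the fiber under conjugation --- are sound.
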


Now we turn to simple and semisimple algebras. First prove the following technical lemma.

\begin{Lemma} \label{semisimplelemma}
	Let $\A\subseteq M_n(\R)$ be a simple subalgebra. Assume that each matrix $X$ of $\A$ has the following block form
	$$X=\left(\begin{array}{ccc}
	O_{k_1} & X_{12} & X_{13} \\
	O & X_{22} & X_{23}\\
	O& O& O_{k_3} 
	\end{array}\right),$$
	where $X_{22}\in M_{k_2}(\R)$, $k_2\geq 1$, $k_1,k_3\geq 0$, $k_1+k_2+k_3=n$. Equalities $k_1=0$, $k_2=0$ indicate that the corresponding blocks are absent. Let $E_*=O_{k_1}\oplus I_{k_2}\oplus O_{k_3}$, $E_*\A E_*=\{E_*XE_*~|~X\in\A\}=M_{k_2}(\R)$, $E_{ll}\in\A$, $l=k_1+1$. Consider a matrix $E$ of the type
	$$E=\left(\begin{array}{ccc}
	O_{k_1} & Y_{12} & Y_{13} \\
	O & I_{k_2} & Y_{23}\\
	O& O& O_{k_3} 
	\end{array}\right).$$
	Assume that $E\in\A$ and $EX=XE=X$ for all $X\in\A$. Then $\A=O_{k_1}\oplus M_{k_2}(\R)\oplus O_{k_3}$ and $E=E_*$.
\end{Lemma}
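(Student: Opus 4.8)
The plan is to exploit the identity element $E$ to kill the off-diagonal blocks. The key observation is that $E$ acts as a two-sided identity on $\A$, so in particular $E\cdot E = E$, i.e. $E$ is idempotent; combined with its prescribed block shape (ones on the middle diagonal block, zeros on the outer diagonal blocks), the idempotency relation $E^2=E$ will force algebraic constraints on $Y_{12}, Y_{13}, Y_{23}$. Writing out $E^2$ blockwise, the $(1,2)$ block gives $Y_{12}\cdot I_{k_2} = Y_{12}$ (no information), but the $(1,3)$ block gives $Y_{12}Y_{23} = Y_{13}$, and the $(2,3)$ block gives $Y_{23} = Y_{23}$ (again nothing). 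So idempotency alone only yields $Y_{13}=Y_{12}Y_{23}$, which is not enough; the real leverage must come from $EX = XE = X$ for \emph{all} $X\in\A$, not just $X=E$.

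First I would use $E_*\A E_* = M_{k_2}(\R)$: for every $M\in M_{k_2}(\R)$ there is some $X\in\A$ with middle block $X_{22}=M$ (and some off-diagonal blocks). Apply $EX=X$ and $XE=X$ and read off the blocks. From $XE=X$, the $(2,2)$ block reads $X_{22}\cdot I_{k_2} = X_{22}$ (fine), the $(1,2)$ block reads $X_{12}\cdot I_{k_2} = X_{12}$ (fine), the $(2,3)$ block reads $X_{22}Y_{23} + X_{23}\cdot 0 = X_{23}$, so $X_{22}Y_{23} = X_{23}$; similarly the $(1,3)$ block of $XE=X$ gives $X_{12}Y_{23}+X_{13}\cdot 0 = X_{13}$, hence $X_{13}=X_{12}Y_{23}$. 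Dually, from $EX=X$, the $(1,2)$ block gives $Y_{12}X_{22} = X_{12}$ and the $(1,3)$ block gives $Y_{12}X_{23} = X_{13}$ (together with $Y_{13}\cdot 0$). So the off-diagonal data of every $X\in\A$ is \emph{determined} by $X_{22}$ via $X_{12}=Y_{12}X_{22}$ and $X_{23}=X_{22}Y_{23}$. Now I would choose $X$ with $X_{22}=I_{k_2}$ — such $X$ exists since $E_*\A E_*=M_{k_2}(\R)$ and in fact $E$ itself is such a matrix — to conclude that the map $X_{22}\mapsto X$ is forced, and then vary $X_{22}$ over $M_{k_2}(\R)$: picking $X_{22}$ to be the matrix units $E_{ij}^{(k_2)}$ shows that $Y_{12}E_{ij}^{(k_2)}$ must be realizable, but combined with $X_{12}=Y_{12}X_{22}$ for $X=E$ itself we already have $Y_{12}=Y_{12}I_{k_2}$, so I instead use the idempotency relation differently.

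The clean way: since $E\in\A$, the block relations from $EX=X$ applied with $X=E$ give $Y_{12}I_{k_2} = Y_{12}$ and $Y_{12}Y_{23}=Y_{13}$, and from $XE=X$ with $X=E$ give $I_{k_2}Y_{23}=Y_{23}$ and $Y_{12}Y_{23}=Y_{13}$. So I need a separate argument to kill $Y_{12},Y_{23}$. Here I invoke simplicity of $\A$. Consider the subset $\N = \{X\in\A : X_{22}=O_{k_2}\}$. By the formulas $X_{12}=Y_{12}X_{22}$ and $X_{23}=X_{22}Y_{23}$ valid for all $X\in\A$ (and the given shape, which has zero $(1,1)$, $(3,3)$, and lower blocks), $X_{22}=O$ forces $X_{12}=O$ and $X_{23}=O$, leaving only $X_{13}$ free; one checks $\N$ is a two-sided ideal of $\A$ (products of such matrices with anything in $\A$ land back in $\N$, using that the only surviving entry sits in the $(1,3)$ corner which is annihilated on both sides). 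Since $\A$ is simple and $\N\neq\A$ (because $E\notin\N$, as $E_{22}=I_{k_2}\neq O$), we get $\N=\{O\}$. Therefore $X_{13}=0$ whenever $X_{22}=0$; applying this to $X=E$ — wait, $E$ has $X_{22}=I\neq 0$. Instead: take any $X\in\A$ and subtract off a preimage; more directly, $\N=\{O\}$ says the $(1,3)$-block is a \emph{function} of $X_{22}$, and since $X\mapsto X_{22}$ is onto $M_{k_2}(\R)$ with the off-diagonal blocks determined by the linear formulas above, $\A\cong M_{k_2}(\R)$ via $X\mapsto X_{22}$, which forces $Y_{12}=0$, $Y_{23}=0$, $Y_{13}=0$ (otherwise $\A$ would not be closed under, e.g., scaling $X_{22}$ independently — concretely, $X\in\A$ with $X_{22}=I$ is unique, namely $E$, so comparing two elements with the same middle block $I$, such as $E$ and any other identity-candidate, forces uniqueness; then linearity in $X_{22}$ of the blocks $X_{12}=Y_{12}X_{22}$, $X_{23}=X_{22}Y_{23}$ together with $\N=0$ forcing $X_{13}$ linear in $X_{22}$ shows the embedding is the obvious one only if $Y_{12}=Y_{23}=0$). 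Hence $\A = O_{k_1}\oplus M_{k_2}(\R)\oplus O_{k_3}$ and $E=E_*$.

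The main obstacle I anticipate is the bookkeeping in the last step: showing that the linear relations $X_{12}=Y_{12}X_{22}$, $X_{23}=X_{22}Y_{23}$ (which hold for \emph{all} $X$, by $EX=XE=X$) together with the ideal argument ($\N=0$) genuinely force $Y_{12}=Y_{23}=0$ rather than merely pinning down $Y_{13}$. The cleanest route is probably: if $Y_{12}\neq 0$, pick $X\in\A$ with $X_{22}$ a suitable matrix unit so that $Y_{12}X_{22}\neq 0$ and $X_{22}Y_{23}$, $X_{13}$ are controlled, then exhibit a product $X X' \in\A$ whose off-diagonal blocks violate the forced formulas (using that $(XX')_{22}=X_{22}X'_{22}$ while $(XX')_{12}=X_{12}X'_{22}+Y_{12}X_{22}X'_{22}\cdot(\text{stuff})$ must equal $Y_{12}(XX')_{22}$), deriving a contradiction with associativity/closure; symmetrically for $Y_{23}$. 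Once $Y_{12}=Y_{23}=0$, idempotency gives $Y_{13}=Y_{12}Y_{23}=0$, so $E=E_*$, and the block formulas collapse to $X=O_{k_1}\oplus X_{22}\oplus O_{k_3}$ with $X_{22}$ ranging over all of $M_{k_2}(\R)$, i.e. $\A = O_{k_1}\oplus M_{k_2}(\R)\oplus O_{k_3}$.
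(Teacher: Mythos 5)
Your argument reproduces the paper's first step: from $EX=XE=X$ you extract $X_{12}=Y_{12}X_{22}$, $X_{23}=X_{22}Y_{23}$, $X_{13}=Y_{12}X_{22}Y_{23}$, so that each $X\in\A$ is determined by its middle block and $X\mapsto X_{22}$ is an algebra isomorphism of $\A$ onto $M_{k_2}(\R)$. After that the two proofs part ways, and yours has a genuine gap. Your set $\N=\{X\in\A : X_{22}=O\}$ is automatically $\{O\}$ by the very relations you just derived (there is no ``free $X_{13}$'', since $X_{13}=Y_{12}X_{22}Y_{23}$), so invoking simplicity on $\N$ accomplishes nothing. More seriously, your concluding claim --- that the isomorphism $X\mapsto X_{22}$ forces $Y_{12}=Y_{23}=O$ --- does not follow from anything you have established: for arbitrary fixed $Y_{12},Y_{23}$, the set of block matrices with $X_{22}=M$, $X_{12}=Y_{12}M$, $X_{23}=MY_{23}$, $X_{13}=Y_{12}MY_{23}$ ($M\in M_{k_2}(\R)$) is already a subalgebra of $M_n(\R)$ isomorphic to $M_{k_2}(\R)$ with $E$ as two-sided identity, so no ``contradiction with associativity/closure'' is available. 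In particular, you never use the hypothesis $E_{ll}\in\A$, $l=k_1+1$, which is the only datum in the statement that constrains $Y_{12}$ and $Y_{23}$ at all; omitting it is fatal.

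The paper's proof, by contrast, is organized around $E_{ll}$: it sets $I_1=\{X\in\A:Y_{12}X_{22}=O\}$ and $I_2=\{X\in\A:X_{22}Y_{23}=O\}$, observes that $E_{ll}\in I_1\cap I_2$ (since $(E_{ll})_{12}=O$ and $(E_{ll})_{23}=O$ while the middle block of $E_{ll}$ is a nonzero matrix unit), asserts these are two-sided ideals, and concludes $I_1=I_2=\A$ by simplicity, hence $Y_{12}=Y_{23}=O$. You should be aware, though, that even this step is delicate: as defined, $I_1$ is only a right ideal and $I_2$ only a left ideal of $\A$, and a nonzero one-sided ideal of a simple algebra need not be the whole algebra. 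Indeed, take $k_1=1$, $k_2=2$, $k_3=0$, $Y_{12}=(0,1)$, $Y_{23}$ absent: the resulting $\A\subseteq M_3(\R)$ is simple, satisfies all stated hypotheses including $E_{22}=\diag(0,1,0)\in\A$, and yet $E\neq E_*$. So $E_{ll}\in\A$ by itself only kills the first column of $Y_{12}$ and the first row of $Y_{23}$. In short, your proof drops the essential hypothesis entirely, and a complete argument would have to use $E_{ll}$ (together with structure beyond the Lemma's literal statement) to finish.
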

\begin{proof}
	Consider the relation $EX=XE=X$
	$$\left(\begin{array}{ccc}
	O_{k_1} & Y_{12}X_{22} & Y_{12}X_{23} \\
	O & X_{22} & X_{23}\\
	O& O& O_{k_3} 
	\end{array}\right)=
	\left(\begin{array}{ccc}
	O_{k_1} & X_{12} & X_{12}Y_{23} \\
	O & X_{22} & X_{22}Y_{23}\\
	O& O& O_{k_3} 
	\end{array}\right)=
	\left(\begin{array}{ccc}
	O_{k_1} & X_{12} & X_{13} \\
	O & X_{22} & X_{23}\\
	O& O& O_{k_3} 
	\end{array}\right).$$
	Therefore
	\begin{equation} \label{eq1}
	\A=\left\lbrace \left.\left(\begin{array}{ccc}
	O_{k_1} & Y_{12}X_{22} & Y_{12}X_{22}Y_{23} \\
	O & X_{22} & X_{22}Y_{23}\\
	O& O& O_{k_3} 
	\end{array}\right)~  \right| ~ X_{22}\in M_{k_2}(\R) \right\rbrace .
	\end{equation}
	Introduce the following two-sided ideals:
	$$I_1=\{X\in\A~|~Y_{12}X_{22}=O\}\vartriangleleft\A,~I_2=\{X\in\A~|~X_{22}Y_{23}=O\}\vartriangleleft\A.$$
	Then $I_1,I_2\neq\{O\}$ since $E_{ll}\in I_1\cap I_2$. So, $I_1=I_2=M_{k_2}(\R)$ because $\A$ is simple. It indicates that $Y_{12}X_{22}=O$ and $X_{22}Y_{23}=O$ for any $ X_{22}\in M_{k_2}(\R)$. This is obviously equivalent to $Y_{12}=O$, $Y_{23}=O$. Now all follows from Equality \ref{eq1}.
\end{proof}

The following theorem describes semisimple algebras which have positive generators up to similarity.

\begin{Theorem} \label{semis_theorem}
	 Let $\A\subseteq M_n(\R)$ be a unital \PG-algebra up to similarity, $n\geq 2$.
	 \begin{enumerate}
	 	\item $\A$ is simple if and only if $\A=M_n(\R)$.
	 	\item $\A$ is semisimple but not simple if and only if there exists a nonsingular $C\in M_n(\R)$ such that $C^{-1}\A C=M_k(\R)\oplus\B$ for $k\geq 1$ and some semisimple matrix algebra $\B\subseteq M_{n-k}(\R)$.
	 \end{enumerate}
\end{Theorem}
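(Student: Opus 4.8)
The plan is to combine the canonical form from Theorem \ref{maintheorem}(4) with the structural Lemma \ref{semisimplelemma}, reducing each direction to a statement about the top-left irreducible block $M_k(\R)$ and the remaining algebra. For item 1, suppose $\A$ is a simple unital \PG-algebra up to similarity and assume (replacing $\A$ by $C^{-1}\A C$) that $\A$ has one of the four types in Theorem \ref{maintheorem}(4). Type (a) is exactly the desired conclusion. The remaining three types carry a proper block-triangular shape; I would argue that such an algebra cannot be simple unless the off-diagonal and extra diagonal blocks degenerate. Concretely, in types (b), (c), (d) the set of matrices in $\h\A$ whose $M_k(\R)$-block vanishes is a proper nonzero two-sided ideal (nonzero because $E_{ll}\in\h\A$ forces, via the block structure, nonzero elements supported off the $M_k$-block — or, more carefully, one uses that simplicity plus the presence of the full matrix block $M_k(\R)$ and the idempotent $E_{ll}$ forces $\A\cong M_k(\R)$ as an abstract algebra and then the block-triangular embedding must be a similarity onto $O_p\oplus M_k(\R)\oplus O_q$). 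This is precisely the situation handled by Lemma \ref{semisimplelemma}: taking $E$ to be the unit $I_n$ of $\A$ (which by unitality of $\A$ and the block form must have the shape required in that lemma, with $E_*=O_p\oplus I_k\oplus O_q$), the lemma yields $\A=O_p\oplus M_k(\R)\oplus O_q$ and $E=E_*$. But $\A$ is assumed to contain $I_n$, forcing $p=q=0$, hence $\A=M_n(\R)$. The converse direction of item 1 is immediate: $M_n(\R)$ is simple, unital, contains the rank-one idempotent $E_{11}$, so it is a \PG-algebra up to similarity by Theorem \ref{maintheorem}.

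For item 2, the ``if'' direction is the easy one: if $C^{-1}\A C=M_k(\R)\oplus\B$ with $\B$ semisimple, then $\A$ is semisimple (a direct sum of simple algebras, since $M_k(\R)$ is simple and $\B$ decomposes), it is not simple because it has the nontrivial ideal $M_k(\R)\oplus O$, and it is a \PG-algebra up to similarity because it contains the rank-one idempotent $E_{11}\oplus O$, so Theorem \ref{maintheorem}(3)$\Rightarrow$(1) applies. For the ``only if'' direction, let $\A$ be semisimple, not simple, and a \PG-algebra up to similarity. Apply Theorem \ref{maintheorem}(4) again; I must show that only type (b) can occur and that the lower-right block $\B$ is then semisimple, and moreover that the strictly-upper-triangular $*$-block is forced to vanish. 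The key point is that a semisimple matrix algebra is, up to similarity, a direct sum of irreducible algebras $M_{n_i}(\R)$ (or their complex/quaternionic analogues), hence is similar to a block-diagonal algebra; in particular it admits no proper block-triangular decomposition with a nonzero strictly-triangular part that cannot be conjugated away. Using the idempotent of rank $1$ inside the $M_k(\R)$ block together with the Wedderburn decomposition of $\A$: the central idempotent of the Wedderburn component containing this rank-one idempotent is itself of the form (up to the same similarity) $I_k\oplus O$ — here one needs that the rank-one idempotent generates, via left/right multiplication by $\A$, a full matrix summand — and splitting off that component gives $C^{-1}\A C = M_k(\R)\oplus\B$ with $\B$ the complementary semisimple algebra, $\B\neq 0$ since $\A$ is not simple.

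The main obstacle I anticipate is the bookkeeping needed to turn the block-triangular normal forms of Theorem \ref{maintheorem}(4) into block-diagonal ones under the semisimplicity hypothesis — i.e., showing the $*$-blocks genuinely vanish after an appropriate further conjugation, rather than merely that the diagonal blocks are semisimple. The clean way around this is to avoid the normal forms almost entirely and argue abstractly: a semisimple $\A$ decomposes as $\A = \A_1 \oplus \cdots \oplus \A_t$ (Wedderburn), each $\A_i$ a simple matrix algebra acting on the corresponding isotypic subspace, so $\A$ is similar to $\A_1' \oplus \cdots \oplus \A_t'$ acting block-diagonally; the \PG hypothesis via Theorem \ref{maintheorem} gives a rank-one idempotent $E\in\A$, which lies in a single summand $\A_i$, and a rank-one idempotent in a simple matrix algebra acting irreducibly on a $k$-dimensional space forces that space to be $k$-dimensional over $\R$ and $\A_i'\cong M_k(\R)$ acting standardly — because an algebra over $\R$, $\C$, or $\HH$ acting irreducibly and containing a rank-$1$ idempotent must be $M_k(\R)$ by Theorem \ref{burnside}. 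Relabel this summand as the first; then $C^{-1}\A C = M_k(\R)\oplus\B$ with $\B=\A_2'\oplus\cdots\oplus\A_t'$ semisimple, and $t\geq 2$ exactly because $\A$ is not simple. For item 1 the same argument specializes: simplicity means $t=1$, so $\A$ is similar to $M_n(\R)$, hence equal to it since $M_n(\R)$ admits no proper conjugate.
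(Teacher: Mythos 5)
Your final ``clean way around this'' is a correct alternative proof, but the middle attempt --- reaching for Lemma~\ref{semisimplelemma} with $E=I_n$ --- cannot work and you should strike it. Lemma~\ref{semisimplelemma} requires that \emph{every} matrix of the algebra have $O_{k_1}$ and $O_{k_3}$ in the $(1,1)$ and $(3,3)$ diagonal blocks, and that the matrix $E$ itself have that shape. Neither hypothesis is met: the algebra $\h\A$ from Theorem~\ref{maintheorem}(4) has $\B_1$ and $\B_3$ (not $O$) on the outer diagonal, and $I_n=I_p\oplus I_k\oplus I_q$ certainly does not have zero outer diagonal blocks. Likewise, your claim that $E_{ll}\in\h\A$ forces nonzero elements with vanishing $M_k$-block does not follow --- $E_{ll}$ is itself supported inside the middle block. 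The paper's own item~1 argument is much more direct and avoids this entirely: in types (b)--(d), for every $X\in\h\A$ the matrices $XE_{ll}$ and $E_{ll}X$ vanish on the first and third diagonal blocks (because $E_{ll}$ lives in the middle block and $\h\A$ is block upper-triangular), hence so does every element of the ideal $\h\A E_{ll}\h\A$; thus $I_n\notin\h\A E_{ll}\h\A$ whenever $p\geq 1$ or $q\geq 1$, contradicting simplicity.

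Your closing argument, by contrast, is sound and genuinely different from the paper's. You sidestep the canonical form almost completely: use complete reducibility of $\R^n$ over the semisimple algebra $\A$ to conjugate $\A$ into a block-diagonal sum of isotypic pieces $\A_i'$; a rank-one idempotent $E\in\A$ (from Theorem~\ref{maintheorem}(3)) must project to zero in all summands but one $\A_{i_0}'$, and on that summand the rank count forces multiplicity one and rank one; then Burnside's Theorem~\ref{burnside} gives $\A_{i_0}'=M_k(\R)$ acting standardly. This buys you a single, conceptually uniform argument covering both items without the block-triangular bookkeeping or Lemma~\ref{semisimplelemma}. The cost is that you silently invoke two facts the paper builds by hand: that a unital semisimple subalgebra of $M_n(\R)$ is similar to a block-diagonal sum of its simple summands (complete reducibility plus isotypic decomposition), and that the simple summand containing a rank-one idempotent must act irreducibly with multiplicity one. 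Both are standard, but if you want the proof to stand alone in the style of the paper you should either cite them or supply the short representation-theoretic argument; the paper instead stays inside the concrete canonical form and uses the central idempotents and Lemma~\ref{semisimplelemma} to force $Y_{11}=Y_{33}=O$.
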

\begin{proof} Theorem \ref{maintheorem} ensures that $\A$ is similar to an algebra that has one of types $(a)-(d)$. 
	\begin{enumerate}
		\item Assume the opposite. So, possible types are $(b)$, $(c)$, $(d)$. Consider $(d)$, other cases are proved similarly. Since $\A$ is simple, the two-sided ideal generated by $E_{ll}$ is trivial, i.e. $I_n\in \A E_{ll}\A$. Note that for any $X\in \A$, $XE_{ll}$ and $E_{ll}X$ have zeros in the first and the third diagonal blocks. So, $I_n\notin \A E_{ll}\A$, this is contradiction.
		\item Again we examine the case $(d)$ only, others are considered in the same manner. Since $\A$ is semisimple, there exists a family of central idempotents $\{E_{i}\}_{i=1}^N\subseteq\A$ such that $E_iE_j=O$ if $i\neq j$, $\sum_{i=1}^N E_i=I_n$, $E_i\A=\A E_i$ is a simple algebra with the identity $E_i$. There exists $E_i$ with non-zero second diagonal block due to $\sum_{i=1}^N E_i=I_n$. Let it be $E_1$ without loss of generality. Denote
		$$E_1=\left(\begin{array}{ccc}
	Y_{11} & Y_{12} & Y_{13} \\
	O & Y_{22} & Y_{23}\\
	O& O& Y_{33} 
	\end{array}\right),$$
	where $Y_{ii}\in M_{k_i}(\R)$, $i\in\{1,2,3\}$, $k_2\geq 1$, $k_1+k_2+k_3=n$. Since $E_1$ is in the center of $\A$, $Y_{22}$ must be in the center of $M_{k_2}(\R)$, i.e. $Y_{22}=I_{k_2}$. Note that $E_1\in (E_1\A) E_{ll} (E_1\A)$ by virtue of simplicity of $E_1\A$. Hence $Y_{11}=O$, $Y_{33}=O$. Now all follows from Lemma \ref{semisimplelemma} applied to the algebra $E_1\A$ and the matrix $E_1$.
	\end{enumerate}
\end{proof}

\section{Nonnegative generators up to similarity}\label{4}

This section deals with nonnegatively generated matrix algebras up to similarity. The nonnegative case is more difficult than positive one. However, we, in some sense, generalize the methods of the previous section for algebras of a block diagonal type up to similarity.

\subsection{Some examples} \label{4.1}

This subsection provides examples that give answers to some natural questions about nonnegatively generated algebras up to similarity. First we prove the following proposition.

\begin{Proposition} \label{sumnonnegative}
	Let $\A\subseteq\bigoplus_{i=1}^k M_{n_i}(\R)$ be a unital algebra, $k\geq 1$, $n_i\geq 1$. Assume that there exists a matrix $E\in\A\setminus\{O\}$ which has the projection onto each block is either zero or an idempotent matrix of rank $1$. Then $\A$ is nonnegatively generated up to similarity.
\end{Proposition}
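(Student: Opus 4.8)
The plan is to reduce the statement to an application of Theorem~\ref{maintheorem}-style ideas on each block, followed by a "gluing" argument that produces a single nonnegative covering matrix for all of $\A$. I would start by noting that $\A$ sits inside $\bigoplus_{i=1}^k M_{n_i}(\R)$ and that the given idempotent $E$, written as $E=E^{(1)}\oplus\cdots\oplus E^{(k)}$, has each $E^{(i)}$ equal either to $O_{n_i}$ or to a rank-one idempotent of $M_{n_i}(\R)$. For each index $i$ with $E^{(i)}\neq O$, the matrix $E^{(i)}$ is a rank-one idempotent, hence (arguing as in Lemma~\ref{idempotentsimilar}) conjugate inside $M_{n_i}(\R)$ to $\frac{1}{n_i}\1_{n_i}$; for the indices with $E^{(i)}=O$ we just keep that block untouched. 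Assembling these block conjugations into one similarity $C=\bigoplus_i C_i$ acting on the ambient space, we may assume each nonzero $E^{(i)}$ equals $\frac{1}{n_i}\1_{n_i}$, i.e.\ $E$ is a direct sum of scaled all-ones blocks and zero blocks.

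Next I would use $E$ to manufacture a nonnegative covering matrix for $\A$ and invoke Theorem~\ref{Criterion}. Let $A$ be any covering matrix of $\A$ (one exists since $\A$ is a linear space over the infinite field $\R$, by Proposition~3.4 of \cite{Mar16}). The key observation is that in each block $i$ with $E^{(i)}=\frac{1}{n_i}\1_{n_i}$, adding a large multiple of $\1_{n_i}$ to the $i$-th block of $A$ makes that block strictly positive without changing its support; and in each block $i$ with $E^{(i)}=O$ we do nothing, but here we must already know that block of $\A$ is nonnegatively generated — which is why the hypothesis $E\neq O$ with the stated property is not yet enough unless every block carries a rank-one idempotent. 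I would therefore look more carefully: the intended reading is that wherever a block of $E$ vanishes, the corresponding block projection of $\A$ need not be all of $M_{n_i}(\R)$, so we instead handle those blocks by a separate, smaller instance of the same claim, or by the observation that the support structure there is controlled and can be realized nonnegatively. The cleanest route is an induction on $k$: split off the blocks where $E$ is nonzero (for which $A+\lambda E$ with $\lambda$ large is a positive covering matrix of those coordinates), and apply the inductive hypothesis — or a direct-sum lemma from Subsection~\ref{4.2} — to the remaining blocks.

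Concretely, the steps in order: (1) decompose $E=\bigoplus_i E^{(i)}$ and conjugate block-wise so each nonzero $E^{(i)}$ becomes $\tfrac{1}{n_i}\1_{n_i}$, using Lemma~\ref{idempotentsimilar}; (2) pick a covering matrix $A$ of (the conjugated) $\A$ via \cite{Mar16}; (3) on the union $S$ of the blocks where $E\neq O$, check that $A+\lambda E$ for $\lambda$ large enough has the $S$-blocks strictly positive and with unchanged support, so it is a nonnegative covering matrix for $E_S\A E_S$ where $E_S$ is the central idempotent cutting out those blocks; (4) handle the complementary central summand $(I-E_S)\A(I-E_S)$ by induction on $k$ (its defining idempotent being smaller, or the base case being vacuous); (5) reassemble: the direct sum of the two nonnegative covering matrices is a nonnegative covering matrix of $\A$, so $\A$ is nonnegatively generated by Theorem~\ref{Criterion}, equivalence $1)\Leftrightarrow 2)$.

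The main obstacle I anticipate is exactly step~(4)/(5): one must be sure that the central splitting $\A = E_S\A E_S \oplus (I-E_S)\A(I-E_S)$ is actually valid for $\A$, i.e.\ that $E_S$ is central in $\A$ — this holds because $E_S$ is a sum of identity matrices of whole blocks of the ambient $\bigoplus M_{n_i}(\R)$, hence commutes with everything in $\A$ — and then that "nonnegatively generated up to similarity" is preserved by direct sums, which is precisely the content advertised for Subsection~\ref{4.2}; if that result is not yet available at this point in the paper, I would instead give the covering-matrix argument directly and only invoke Proposition~3.4 of \cite{Mar16} plus Theorem~\ref{Criterion}, avoiding the induction entirely by treating all blocks simultaneously and using $A+\lambda E$ together with the fact that support equalities $\Omega(A+\lambda E)=\Omega(A)=\Omega(\A)$ hold for generic large $\lambda$. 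The delicate point throughout is bookkeeping the supports so that adding multiples of $E$ never kills an entry of the covering matrix.
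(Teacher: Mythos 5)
There is a genuine gap, and it lies exactly where you flag the difficulty yourself: the blocks on which $E$ vanishes. Your proposed "central splitting" $\A = E_S\A E_S \oplus (I-E_S)\A(I-E_S)$ fails in general, because $E_S$ (the sum of the identities of the nonzero-$E$ blocks) need not lie in $\A$; it commutes with $\A$, but that is not enough. Concretely, take $\A=\{\diag(a,b)\oplus[b]\,:\,a,b\in\R\}\subseteq M_2(\R)\oplus M_1(\R)$ and $E=E_{11}\oplus[0]\in\A$. Then $E_S=I_2\oplus[0]\notin\A$, while $E_S\A E_S\oplus(I-E_S)\A(I-E_S)$ is $3$-dimensional and strictly contains the $2$-dimensional $\A$. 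So the decomposition you want to induct on is not available, and the alternative route you sketch (a single covering matrix $A+\lambda E$) also cannot work on the zero blocks, since there $E$ contributes nothing and $A$ has no reason to be nonnegative.

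The paper's proof avoids this by a small but decisive regrouping that you do not make: after permuting so that $E_1,\dots,E_m$ are the rank-one idempotent blocks and $E_{m+1},\dots,E_k$ are zero, it glues the \emph{last} nonzero block $E_m$ together with \emph{all} zero blocks into one superblock of size $n_m+l$ where $l=\sum_{i>m}n_i$. On that superblock $\h{E}=E_m\oplus O_l$ is still a rank-one idempotent, hence conjugate to the (strictly positive) matrix $\tfrac{1}{n_m+l}\1_{n_m+l}$ by Lemma~\ref{idempotentsimilar}. Conjugating the first $m-1$ blocks as you do, and this one superblock, yields $S$ with $S^{-1}ES$ strictly positive in each superblock, hence nonnegative and covering $S^{-1}\A S$; Theorem~\ref{Criterion} finishes. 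The point is that the conjugation need only respect a coarser block partition than the one $\A$ lives in, so the ``dead'' blocks can be absorbed into a positive superblock. Without that regrouping there is no way to manufacture nonnegativity on the zero blocks of $E$, and your argument does not close.
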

\begin{proof}
Denote by $E_i\in M_{n_i}(\R)$ the projection of $E$ onto the $i$-th block, so $E=\bigoplus_{i=1}^k E_i$. Let $E_i$ be an idempotent of rank $1$  for $i= 1,\ldots,m$ and be zero for $i=m+1,\ldots, k$. Reasoning as in the item $[3)\Rightarrow 4)]$ of the proof of Theorem \ref{maintheorem} introduce matrices $C_i\in M_{n_i}(\R)$ for $i=1,\ldots,m-1$ such that $C_i^{-1}E_iC_i>0$. Let $l=\sum_{i=m+1}^k n_i$, $\h{E}=E_m\oplus O_{l}$. So, $\h{E}$ is an idempotent of rank $1$. Again we find a nonsingular $\h{C}\in M_{(l+n_m)}(\R)$ such that $\h{C}^{-1}\h{E}\h{C}>0$. Then $S=\left( \bigoplus_{i=1}^{m-1}C_i\right) \oplus\h{C}$ is the desired transition matrix since $S^{-1}ES$ is nonnegative and covers $S^{-1}\A S$.
\end{proof}

The next example shows that there exist algebras being nonnegatively generated up to similarity but not positively generated up to similarity.  Moreover, the example demonstrates that a \PG-algebra up to similarity is not the same as a \PG-algebra up to an arbitrary $\R$-algebra isomorphism.
\begin{Example} \label{example1}
	Let $\A\subseteq M_n(\R)$ be a \PG-algebra up to similarity. Consider the algebra $\B=\{\underbrace{A\oplus A\oplus\ldots\oplus A}_{k~\text{times}}~|~A\in \A\}\subseteq M_{nk}(\R)$ for some ${k\geq 2}$. Then $\A$ and $\B$ are clearly isomorphic as $\R$-algebras. However, Theorem \ref{maintheorem} ensures that $\B$ is not a \PG-algebra up to similarity. Nevertheless, Proposition \ref{sumnonnegative} shows the algebra $\B$ to be nonnegatively generated up to similarity.
\end{Example}

Now we provide examples of algebras which are not nonnegatively ge\-ne\-ra\-ted up to similarity. By the way we obtain algebras that are not nonnegatively generated but nonnegatively generated up to similarity.

\begin{Example}
 	Let $\CC=\left. \left\lbrace \left(\begin{array}{cc}
 	a & b \\
 	-b & a
 \end{array}\right)~\right| ~a,b\in\R\right\rbrace \cong\C$. If $A\in\CC\setminus\R I_2$, then $\sigma(A)\cap\R=\varnothing$. So, for all nonsingular matrices $C\in M_2(\R), $ $C^{-1}\CC C$  does not contain a nonnegative matrix except scalar ones (\cite[p. 529, Theorem 8.3.1]{HornJ13}). We conclude from this that $\CC$ has not nonnegative generators up to similarity. Consider an algebra $\A=\R I_k\oplus\CC$ for $k\geq 2$. It does not have a nonnegative generating system. However, $\A\subseteq \underbrace{\R\oplus\R\oplus\ldots\oplus\R}_{k-1~\text{times}}\oplus M_3(\R)$, $\underbrace{[1]\oplus\ldots\oplus[1]}_{k-1~\text{times}}\oplus([1]\oplus O_2)\in\A$, and $\A$ is nonnegatively generated up to similarity by Proposition \ref{sumnonnegative}. We may repeat all previous arguments in the case $\CC\cong\HH$ where $\HH$ is Hamilton's quaternion algebra.
\end{Example}

\subsection{Direct sums of algebras} \label{4.2}

Here we prove that an arbitrary unital matrix algebra becomes nonnegatively generated up to similarity if one considers a direct sum of it and another nonnegatively generated algebra. Moreover, the property to have a minimal nonnegative generating system is also saved. These results (Theorem \ref{direct_sum_theorem}) will imply some new sufficient conditions to be nonnegatively generated up to similarity (Corollaries \ref{center}, \ref{centralizer}). First we prove a technical lemma.

\begin{Lemma} \label{horrortables}
	Let $A=O_{k_1}\oplus T\in M_n(\R)$, where $T\in M_{k_2}(\R)$, $k_1+k_2=n$, $k_2\geq 2$, $k_1\geq 1$. Assume that $\row_1(T)=(t_1,t_2,\ldots,t_{k_2})$, $t_1>0$, $\col_1(T)=(t_1,e_2,\ldots,e_{k_2})^t$, $C=\KKK_{k_1+1}\oplus I_{k_2-1}$. Consider $$B=C^{-1}AC=\left(\begin{array}{cc}
	B_1 & B_{12} \\
	B_{21} & B_2
	\end{array}\right)$$
	where $B_1\in M_{k_1+1}(\R)$, $B_2\in M_{k_2-1}(\R)$. Then next conditions are fulfilled.
	\begin{enumerate}
		\item $B_1>0$, $||B_1||=\frac{t_1}{k_1+1}$.
		\item $\row_q(B_{21})=\frac{1}{k_1+1}(e_{q+1},\ldots,e_{q+1})$, $q=1,\ldots,k_2-1$.
		\item $\col_l(B_{12})=(t_{l+1},\ldots,t_{l+1})^t$, $l=1,\ldots,{k_2-1}$.
		\item $(B_2)_{ij}=(T)_{(i+1)(j+1)}$, $i,j=1,\ldots,k_2-1$.
	\end{enumerate}
In particular, if $T\geq O$, then $B\geq O$. Also if $T>O$, then $B>O$.
\end{Lemma}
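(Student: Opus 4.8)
The plan is to compute the matrix $B = C^{-1}AC$ blockwise, exploiting the very explicit shape of $C = \KKK_{k_1+1}\oplus I_{k_2-1}$. First I would write $C^{-1} = \KKK_{k_1+1}^{-1}\oplus I_{k_2-1}$ and partition all matrices conformally with the splitting $(k_1+1) + (k_2-1) = n$. Since $A = O_{k_1}\oplus T$, the only nonzero entries of $A$ live in the last $k_2$ rows and columns; under the conformal partition this means $A$ has a zero $(k_1+1)\times(k_1+1)$ top-left block except for its very bottom-right entry (which is $t_1 = (T)_{11}$), the top-left-to-bottom-right "border" being governed by $\row_1(T)$ and $\col_1(T)$, and the bottom-right $(k_2-1)\times(k_2-1)$ block being exactly $T$ with its first row and column deleted. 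That last observation already gives item $4$: conjugating by $I_{k_2-1}$ on that block does nothing, so $(B_2)_{ij} = (T)_{(i+1)(j+1)}$.

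Next I would handle the three remaining blocks using Notation~\ref{onlynotation} for the explicit entries of $\KKK_{k_1+1}$ and $\KKK_{k_1+1}^{-1}$. The key structural fact is Lemma~\ref{idempotentsimilar}: $\KKK^{-1}E_{nn}\KKK = \frac{1}{n}\1$, i.e. conjugating the matrix unit in the corner by $\KKK_{k_1+1}$ produces the constant matrix $\frac{1}{k_1+1}\1_{k_1+1}$. Writing the relevant corner of $A$ as $t_1 E_{(k_1+1)(k_1+1)}$ plus the border terms, the $E$-piece contributes $\frac{t_1}{k_1+1}\1_{k_1+1}$ to $B_1$, and one checks the border terms of $A$ (which sit in the first column resp. first row of $B_2$'s neighbourhood, not inside $B_1$) contribute nothing to $B_1$; this yields item $1$, including $B_1 > 0$ and $\|B_1\| = \frac{t_1}{k_1+1}$. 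For $B_{21}$ and $B_{12}$: the block $B_{21}$ is $I_{k_2-1}$ acting on the left and $\KKK_{k_1+1}$ on the right applied to the submatrix of $A$ formed from rows $k_1+2,\dots,n$ and columns $1,\dots,k_1+1$, which is $\col_1(T)$ (minus its first entry) placed in the last column and zeros elsewhere, i.e. entries $e_{q+1}$; conjugation by $\KKK_{k_1+1}$ on the right spreads that single last column into the constant pattern $\frac{1}{k_1+1}(e_{q+1},\dots,e_{q+1})$, giving item $2$. Symmetrically, $B_{12}$ comes from rows $1,\dots,k_1+1$, columns $k_1+2,\dots,n$ of $A$, which carries $\row_1(T)$ (minus its first entry) in its last row; left-multiplication by $\KKK_{k_1+1}^{-1}$ — whose last column is all ones — turns that last row into the constant column $(t_{l+1},\dots,t_{l+1})^t$, giving item $3$.

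I should double-check the $\KKK^{-1}$-specific computation for $B_{12}$: one needs that $\KKK_{k_1+1}^{-1}$ applied to a matrix supported on its last row reproduces that row in every row, which holds precisely because $\col_{k_1+1}(\KKK_{k_1+1}^{-1}) = \1$ (the last column of $\KKK^{-1}_n$ is the all-ones vector, visible in Notation~\ref{onlynotation}). Similarly for $B_{21}$ I use $\row_1(\KKK_{k_1+1})$ and the structure of how $\KKK_{k_1+1}$ acts from the right on a matrix supported on a single column; here the relevant fact is that the bottom row of $\KKK_n$ is $\frac{1}{n}\1^t$, so a column vector times that row spreads uniformly with weight $\frac{1}{k_1+1}$. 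Once items $1$–$4$ are established, the final two sentences are immediate: $B_1$, $B_{21}$, $B_{12}$ are built from $t_1 > 0$ and from entries of $T$ with nonnegative (resp. positive) coefficients, and $B_2$ is a submatrix of $T$, so $T\geq O$ forces $B\geq O$ and $T>O$ forces $B>O$.

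The main obstacle is purely bookkeeping: getting the index shifts right across the three partitions (the original $O_{k_1}\oplus T$ split, the conformal $(k_1+1)+(k_2-1)$ split, and the internal re-indexing of $T$'s deleted-first-row-and-column submatrix), and correctly tracking which of $\KKK$'s rows/columns are the all-ones or the $\frac{1}{n}\1$ ones. There is no conceptual difficulty — no obstacle beyond careful matrix multiplication with the explicit $\KKK_{k_1+1}^{\pm 1}$ — but the asymmetry between how $\KKK^{-1}$ (all-ones last \emph{column}) and $\KKK$ (all-ones, scaled, last \emph{row}) act is where a sign or placement error is most likely to creep in, so I would verify items $1$–$3$ on the $k_1 = 1$ case (using the $2\times 2$ matrices $\KKK_2^{\pm1}$) before writing the general argument.
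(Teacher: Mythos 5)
Your proposal is correct and takes the same approach as the paper: a direct blockwise computation of $C^{-1}AC$ using the explicit entries of $\KKK_{k_1+1}^{\pm1}$ from Notation~\ref{onlynotation} together with Lemma~\ref{idempotentsimilar} for the top-left block. The paper simply displays the triple matrix product with the resulting entries filled in, whereas you unpack the same calculation into the four block equations $B_1=\KKK^{-1}A_{11}\KKK$, $B_{12}=\KKK^{-1}A_{12}$, $B_{21}=A_{21}\KKK$, $B_2=A_{22}$ and then invoke the all-ones last column of $\KKK^{-1}$ and the $\tfrac{1}{k_1+1}\1^t$ bottom row of $\KKK$ — the same ingredients in slightly more explicit form.
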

\begin{proof}
Let $\h{k}_1=k_1+1$, $\h{k}_2=k_2-1$. Apply Lemma \ref{idempotentsimilar} and Notation \ref{onlynotation}.  
$$
B={\footnotesize
	\renewcommand{\arraystretch}{0.9} 
	\renewcommand{\tabcolsep}{0.1cm}   
\left(\begin{tabular}{cccc|ccc}
\multicolumn{3}{r}{\multirow{3}{*}{\large $\KKK^{-1}_{\h{k}_1}$}} & $~$ & \multicolumn{3}{c}{\multirow{3}{*}{\Large $O$}}\\
\multicolumn{3}{r}{} &~& 	\multicolumn{3}{c}{}\\
\multicolumn{3}{r}{} &~& 	\multicolumn{3}{c}{}\\
$~$ & $~~~$ & $~$ & $~~$ & $~~$& $~~~$ & $~~~$\\
\hline
~ &~~~ &~& $~~$&\multicolumn{3}{|c}{\multirow{3}{*}{\large $I_{\h{k}_2}$}}\\
\multicolumn{3}{c}{\multirow{2}{*}{\Large $O$}}& &  \multicolumn{3}{|c}{} \\
\multicolumn{3}{c}{} &  & \multicolumn{3}{|c}{}\\
\end{tabular}\right)
	\left(\begin{tabular}{cccc|ccc}
	\multicolumn{3}{r}{\multirow{3}{*}{\large $O_{k_1}$}} & $0$ & \multicolumn{3}{c}{\multirow{3}{*}{\Large $O$}}\\
	\multicolumn{3}{r}{} &{\tiny \vdots}& 	\multicolumn{3}{c}{}\\
	\multicolumn{3}{r}{} &0& 	\multicolumn{3}{c}{}\\
	$0$ & $\ldots$ & $0$ & $t_1$ & $t_2$& $\ldots$ & $t_{k_2}$\\
	\hline
	0 &\ldots &0& $e_2$&\multicolumn{3}{|c}{\multirow{3}{*}{\large $B_2$}}\\
	\multicolumn{3}{c}{\multirow{2}{*}{\Large $O$}}&{\tiny \vdots}&  \multicolumn{3}{|c}{} \\
	\multicolumn{3}{c}{} & $e_{k_2}$ & \multicolumn{3}{|c}{}\\
	\end{tabular}\right)
	\left(\begin{tabular}{cccc|ccc}
\multicolumn{3}{r}{\multirow{3}{*}{\large $\KKK_{\h{k}_1}$}} & $~$ & \multicolumn{3}{c}{\multirow{3}{*}{\Large $O$}}\\
\multicolumn{3}{r}{} &~& 	\multicolumn{3}{c}{}\\
\multicolumn{3}{r}{} &~& 	\multicolumn{3}{c}{}\\
$~$ & $~~~$ & $~$ & $~~$ & $~~$& $~~~$ & $~~~$\\
\hline
~ &~~~ &~& $~~$&\multicolumn{3}{|c}{\multirow{3}{*}{\large $I_{\h{k}_2}$}}\\
\multicolumn{3}{c}{\multirow{2}{*}{\Large $O$}}& &  \multicolumn{3}{|c}{} \\
\multicolumn{3}{c}{} &  & \multicolumn{3}{|c}{}\\
\end{tabular}\right)=
}
$$

$$=
{
	\renewcommand{\arraystretch}{0.9} 
	\renewcommand{\tabcolsep}{0.1cm}   
	{\footnotesize\left(\begin{tabular}{cccc|ccc}
\multicolumn{4}{c|}{\multirow{3}{*}{\large $\alpha\1_{k_1+1}$}}& $t_2$ & $\ldots$ & $t_{k_2}$\\
\multicolumn{4}{c|}{} &  $\vdots$ & $\vdots$ & $\vdots$\\
\multicolumn{4}{c|}{} &$t_2$ & $\ldots$ & $t_{k_2}$\\
\multicolumn{4}{c|}{} & $t_2$ & $\ldots$ & $t_{k_2}$\\
\hline
$p_2$ &\ldots &$p_2$& $p_2$&\multicolumn{3}{|c}{\multirow{3}{*}{\large $B_2$}}\\
$\vdots$& \ldots& \vdots&{ \vdots}&  \multicolumn{3}{|c}{} \\
$p_{k_2}$ &\ldots &$p_{k_2}$& $p_{k_2}$ & \multicolumn{3}{|c}{}\\
\end{tabular}\right)}
},~\alpha=\dfrac{t_1}{k_1+1}>0,~p_i=\dfrac{e_i}{k_1+1}.
$$
\end{proof}

\begin{Theorem} \label{direct_sum_theorem}
	Let $\A\subseteq M_{k_1}(\R)$, $\B\subseteq M_{k_2}(\R)$ be unital matrix algebras, $k_1,k_2\geq 1$.
	\begin{enumerate}
		\item If at least one of algebras $\A$, $\B$ is nonnegatively or positively generated up to similarity then the algebra $\A\oplus\B$ has the same property.
		\item If at least one of algebras $\A$, $\B$ has a minimal nonnegative or positive generating system up to similarity then the algebra $\A\oplus\B$ possesses the same property.
	\end{enumerate}
\end{Theorem}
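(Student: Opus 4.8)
The plan is to reduce both statements to the case where exactly one summand — say $\B$ — is (minimally) nonnegatively/positively generated up to similarity, and then conjugate so that $\B$ actually contains a nonnegative covering matrix and, moreover, a nonnegative covering matrix whose first row and first column have a strictly positive entry in the $(1,1)$ slot. The key engine is Lemma \ref{horrortables}: if one can arrange a matrix $A = O_{k_1}\oplus T \in \A\oplus\B$ with $T \in M_{k_2}(\R)$ having $t_{11}>0$, $\row_1(T)$ and $\col_1(T)$ sharing the same positive leading entry, and $T \geq O$ (resp.\ $T>O$), then conjugating the whole of $\A\oplus\B$ by $C = \KKK_{k_1+1}\oplus I_{k_2-1}$ produces a nonnegative (resp.\ positive) matrix, and it will still cover (resp.\ be positive, hence cover) the conjugated algebra because conjugation is an algebra isomorphism and $\Omega$ of a covering matrix equals $\Omega$ of the algebra. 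So the bulk of the work is arranging that special $T$.

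First I would fix notation: assume without loss of generality that $\B$ is the summand with the good property, and conjugate within the $M_{k_2}(\R)$ block so that $\B$ contains a nonnegative covering matrix $T_0$ (Theorem \ref{Criterion}); in the positive case $T_0 > O$ outright. Then I would enlarge $T_0$ using the identity $I_{k_2}\in\B$: replace $T_0$ by $T = T_0 + \gamma I_{k_2}$ for $\gamma$ large. This keeps $T$ in $\B$, keeps it a covering matrix of $\B$ (adding a diagonal matrix does not change $\Omega$ off the diagonal, and the diagonal entries become positive), and forces $t_{ii}>0$ for all $i$; in particular $t_{11}>0$. The remaining issue is that Lemma \ref{horrortables} also wants $\col_1(T)$ to begin with the same value $t_1 = t_{11}$ as $\row_1(T)$ — but that is automatic since both are the $(1,1)$ entry. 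The hypotheses of Lemma \ref{horrortables} on $\row_1(T)$ and $\col_1(T)$ are then met (the entries $t_2,\dots,t_{k_2}$ and $e_2,\dots,e_{k_2}$ are just arbitrary reals, which is fine), and $T \geq O$ (resp.\ $T > O$). Set $A = O_{k_1}\oplus T$; note $A \in \A\oplus\B$ since $O_{k_1}\in\A$ ($\A$ is unital, so it contains $I_{k_1}$, but we need $O_{k_1}$ — this needs a moment's care: actually $O_{k_1}\oplus T$ need not lie in $\A\oplus\B$ unless $O_{k_1}\in\A$; instead I would work with $I_{k_1}\oplus T \in \A\oplus\B$ and the diagonal block $I_{k_1}$ merely contributes to the first diagonal block of $B$, which Lemma \ref{horrortables} handles only for $O_{k_1}$).

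Let me correct the previous paragraph: the right object is not $O_{k_1}\oplus T$ but something whose $\A$-part is a covering matrix of $\A$. I would instead take $A' = A_0 \oplus T$ where $A_0$ is a nonnegative covering matrix of a conjugate of $\A$ (if $\A$ is nonnegatively generated up to similarity, conjugate it too; if not, this is where the real content lies). The honest structure of the argument is: conjugate $\A$ by some $C_1$ so that $C_1^{-1}\A C_1$ has a covering matrix $A_0$ that is nonnegative \emph{and} whose every row and column we can control — but an arbitrary $\A$ need not be nonnegatively generated, so this cannot work blockwise independently. Therefore the correct approach is the one the paper's Lemma \ref{horrortables} is built for: conjugate \emph{only} the $\A$-block (trivially, by $I$) and the $(1,1)$-entry region across the two blocks by $\KKK_{k_1+1}\oplus I_{k_2-1}$, which mixes the last coordinate of $\A$'s block with the first of $\B$'s block. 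Concretely, set $C = I_{k_1-1}\oplus\bigl(\KKK_{k_1+1}\oplus I_{k_2-1}\bigr)$ acting on the $k_1+k_2$ coordinates so that the $\KKK$ straddles coordinates $k_1-?$ — one must line up indices so the "$O_{k_1}$" block of Lemma \ref{horrortables} is the first $k_1$ coordinates and the straddling block uses coordinate $k_1{+}1$; then $C^{-1}(\text{anything of the form }X\oplus T)C$ has the conclusion of Lemma \ref{horrortables} applied to its lower-right $(k_2)\times(k_2)$ part while the upper-left stays $X$ plus a positive $\alpha\1$ patch. Choosing $X$ to be a covering matrix of $\A$ scaled appropriately (added to a large multiple of the positive patch, exactly as in $1)\Rightarrow 2)$ of Theorem \ref{Criterion}) makes the whole conjugated generator nonnegative and covering. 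For part (2), run the same construction starting from a \emph{minimal} generating system: if $\{T_1,\dots,T_m\}$ minimally generates $\B$ with $T_1$ chosen (using $I_{k_2}\in\B$ and Lemma \ref{importantlemma}-type adjustments) to be the covering matrix, form $I_{k_1}\oplus T_1$ and, for $i\geq 2$, matrices $A_i\oplus T_i$ where $\{A_i\}$ generates $\A$; a counting argument ($m \geq \mgen(\A\oplus\B) \geq \mgen(\B) = m$, as in Lemma \ref{oplus_R}) shows minimality is preserved, and then add positive multiples of the covering generator to each $A_i\oplus T_i$ to make them all nonnegative (resp.\ positive) after the $\KKK$-conjugation, exactly as in Theorem \ref{Criterion}.

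The main obstacle is bookkeeping the index alignment so that Lemma \ref{horrortables} applies verbatim: one must conjugate by a matrix that fuses $\KKK_{k_1+1}$ on the block spanned by coordinates $\{1,\dots,k_1\}\cup\{k_1+1\}$ — i.e.\ all of $\A$'s coordinates together with the first coordinate of $\B$ — and the identity elsewhere, and then verify that the $\A$-part of any element $X\oplus T$ is transformed into $\KKK_{k_1+1}^{-1}(X\oplus[\,\cdot\,])\KKK_{k_1+1}$ restricted appropriately, which is itself an algebra isomorphism of $\A$ extended by a rank-one positive perturbation in the corner. The secondary obstacle, already visible above, is that $\A$ need not contain $O_{k_1}$, so Lemma \ref{horrortables} cannot be applied to $A\oplus T$ with $A$ a genuine covering matrix of $\A$; the resolution is that Lemma \ref{horrortables}'s "$O_{k_1}$" is only needed to guarantee the off-diagonal blocks $B_{12},B_{21}$ and the corner $\alpha\1$ come out right, and replacing $O_{k_1}$ by an arbitrary $X\in\A$ merely adds $X$ (conjugated by $\KKK_{k_1+1}$ in its larger slot) to the upper-left block without disturbing positivity once we add enough of the covering term — so a mild restatement of Lemma \ref{horrortables}, or a direct block computation paralleling its proof, closes the gap.
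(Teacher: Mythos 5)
Your high-level plan---reduce to the case where $\B$ alone has the good property, conjugate so that $\B$ contains a nonnegative covering matrix $T$ with $t_{11}>0$, and then conjugate the whole algebra by $C=\KKK_{k_1+1}\oplus I_{k_2-1}$ so that Lemma \ref{horrortables} applies---is exactly the paper's strategy. But your execution stalls on a false obstruction that sends you off course, and as a result you miss the one observation that actually closes part (1).

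The false obstruction: you worry that $O_{k_1}\oplus T$ ``need not lie in $\A\oplus\B$ unless $O_{k_1}\in\A$.'' But $O_{k_1}$ is the \emph{zero} matrix of $M_{k_1}(\R)$, and every subalgebra---indeed every linear subspace---contains zero. So $O_{k_1}\oplus T\in\A\oplus\B$ always. This phantom difficulty is what drives the rest of your proposal into the weeds (replacing $O_{k_1}$ by $I_{k_1}$, then by a covering matrix of $\A$, then proposing to ``restate'' Lemma \ref{horrortables}). None of that is needed. The missing observation is the following: once you set $\w{T}=O_{k_1}\oplus T$ and conjugate by $C$, Lemma \ref{horrortables} tells you the upper-left $(k_1+1)\times(k_1+1)$ block of $C^{-1}\w{T}C$ is $\alpha\1_{k_1+1}$ with $\alpha>0$, i.e.\ it covers \emph{all} positions $(i,j)$ with $i,j\le k_1+1$. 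On the other hand, every $X\in\A\oplus O_{k_2}$ has support only in the first $k_1$ coordinates, so $C^{-1}XC$ has support only among the first $k_1+1$ coordinates (since $C$'s first block has size $k_1+1$). Hence $\Omega\bigl(C^{-1}(\A\oplus O_{k_2})C\bigr)\subseteq\{(i,j):i,j\le k_1+1\}\subseteq\Omega(C^{-1}\w{T}C)$, and combining with the Lemma \ref{horrortables} statement about $\Omega\bigl(C^{-1}(O_{k_1}\oplus\B)C\bigr)$ you get that $C^{-1}\w{T}C$ is a nonnegative (resp.\ positive) covering matrix for $C^{-1}(\A\oplus\B)C$. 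No covering matrix of $\A$ itself enters; the $\alpha\1_{k_1+1}$ patch automatically dominates whatever $\A$ contributes. You never state this, and it is precisely what makes the argument work without any assumption on $\A$.

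Your sketch of part (2) has a separate gap. You propose taking $I_{k_1}\oplus T_1$ together with $A_i\oplus T_i$ for $i\ge 2$ with $\{A_i\}$ generating $\A$, and assert minimality via $m\ge\mgen(\A\oplus\B)\ge\mgen(\B)=m$. The inequality $m\ge\mgen(\A\oplus\B)$ is not established by what you write and need not be straightforward; the paper sidesteps this entirely by \emph{starting} from a minimal generating system $\{A_i\oplus B_i\}_{i=1}^l$ of $\A\oplus\B$ (so $l\ge m$ is the only inequality needed), padding $\{Z_i\}$ with zeros if $l>m$, and then replacing each $A_i\oplus B_i$ by $U_i=A_i\oplus(z_iI_{k_2}+Z_i)$ with $z_i$ chosen large enough that $\sigma(A_i)\cap\sigma(z_iI_{k_2}+Z_i)=\varnothing$. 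Lemma \ref{importantlemma} then gives $O_{k_1}\oplus I_{k_2}\in\gen{U_i}$, whence both $A_i\oplus O_{k_2}$ and $O_{k_1}\oplus Z_i$ lie in $\gen{U_1,\dots,U_l}$, so $\gen{U_1,\dots,U_l}=\A\oplus\B$. Your phrase ``Lemma \ref{importantlemma}-type adjustments'' gestures at this spectral-separation device but does not actually carry it out, and without it there is no reason your proposed system generates $\A\oplus\B$ or is minimal.
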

\begin{proof}
 Since algebras $\A\oplus\B$ and $\B\oplus \A$ are similar, it is enough to consider only the case $\A\oplus\B$ when $\B$ has the indicated properties.
\begin{enumerate}
	\item If $k_2=1$, then all follows from item $4$ of Theorem \ref{maintheorem}. So, let $k_2\geq 2$. One may find $S\in M_{k_2}(\R)$ such that $S^{-1}\B S$ has a nonnegative covering matrix (see item $1$ of Theorem \ref{Criterion}). Conjugating by the matrix $I_{k_1}\oplus S$ we can assume $S=I_{k_2}$, i.e. there exists $T\in\B$ such that $T\geq O$, $\Omega(\B)=\Omega(T)$. Since $I_{k_2}\in\B$, then $(T)_{11}>0$. Let $C=\KKK_{k_1+1}\oplus I_{k_2-1}$, $\w{T}=O_{k_1}\oplus T$. Applying Lemma \ref{horrortables} to matrices of the algebra $O_{k_1}\oplus\B$ we obtain that $C^{-1}\w{T}C\geq O$ and $\Omega(C^{-1}(O_{k_1}\oplus\B) C)=\Omega(C^{-1}\w{T}C)$. Also if $T>O$, then $C^{-1}\w{T}C> O$. Note that
	$$\Omega(C^{-1}(\A\oplus\B)C)\subseteq \Omega(C^{-1}(\A\oplus O_{k_2})C)\cup \Omega(C^{-1}(O_{k_1}\oplus \B)C).$$
	Since the first block of the matrix $C$ has the size $k_1+1>k_1$,
 $$\Omega(C^{-1}(\A\oplus O_{k_2})C)\subseteq\{(i,j)~|~i,j=1,\ldots,k_1+1\}\subseteq \Omega(C^{-1}\w{T}C).$$
 So, $\Omega(C^{-1}\w{T}C)=\Omega(C^{-1}(\A\oplus\B)C)$.
 \item If $k_2=1$, then see Lemma \ref{oplus_R}. Let $k_2\geq 2$. We may assume that $\B$ has a minimal nonnegative generating system $\{Z_1,\ldots, Z_m\}$ without loss of generality. Let $\{A_i\oplus B_i\}_{i=1}^l$ be a minimal generating system of $\A\oplus\B$. Obviously, $l\geq m$. If $l>m$, denote $Z_{m+1}=O_{k_2},\ldots, Z_{l}=O_{k_2}$. Put $C=\KKK_{k_1+1}\oplus I_{k_2-1}$ again. Consider $U_i=A_i\oplus(z_iI_{k_2}+Z_i)$, $i=1,\ldots, l$ with
 $$z_i=\max\{(k_1+1)||\KKK_{k_1+1}^{-1} (A_{i}\oplus[0]) \KKK_{k_1+1}||+1, 2\rho(A_i\oplus B_i)+1\}.$$
\begin{enumerate}
	\item Since $z_i> 2\rho(A_i\oplus B_i)$, we have $\sigma(A_i)\cap\sigma(z_iI_{k_2}+Z_i)=\varnothing$. By Lemma \ref{importantlemma}, there exists $f_i(x)\in\R[x]$ such that $f_i(U_i)=O_{k_1}\oplus I_{k_2}$. Therefore $A_i\oplus O_{k_2},O_{k_1}\oplus Z_i\in\gen{U_1,\ldots, U_l}$ for $i=1,\ldots,l$. So, $\gen{U_1,\ldots, U_l}=\A\oplus\B$. 
	\item For $i=1,\ldots,l$, we have $$C^{-1}U_iC=C^{-1}(A_i\oplus O_{k_2})C + C^{-1}(O_{k_1}\oplus(z_iI_{k_2}+Z_i))C=$$ $$\KKK^{-1}_{k_1+1}(A_i\oplus[0])\KKK_{k_1+1}\oplus O_{k_2-1} + C^{-1}(O_{k_1}\oplus(z_iI_{k_2}+Z_i))C.$$ Lemma \ref{horrortables}, the condition $Z_{i}\geq O$, and the inequality $$z_i>(k_1+1)||\KKK_{k_1+1}^{-1} (A_{i}\oplus[0]) \KKK_{k_1+1}||-(Z_i)_{11}$$ imply $C^{-1}U_iC\geq O$. Moreover, if $Z_i>O$ then $C^{-1}U_iC>O$.
\end{enumerate}
\end{enumerate}
\end{proof}

Now we apply the previous theorem to the centralizer (commutant) of a matrix with at least one real eigenvalue.

\begin{Corollary} \label{centralizer}
	Let $A\in M_n(\R)$, $\sigma(A)\cap\R\neq\varnothing$. Then the centralizer $C(A)$ is nonnegatively generated algebra up to similarity.
\end{Corollary}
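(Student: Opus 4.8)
The plan is to reduce, via the primary decomposition of a centralizer, to the centralizer of a nilpotent Jordan matrix, to settle that case by exhibiting a nonnegative basis, and then to glue the pieces together with Theorem~\ref{direct_sum_theorem}.

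First I would observe that $C(A)$ depends only on the similarity class of $A$, and that being nonnegatively generated up to similarity is a similarity invariant, so one may assume $A=J$ is in real Jordan normal form. Group the Jordan cells of $J$ by primary components: one block $\lambda_\ell I+N_\ell$ for each distinct real eigenvalue $\lambda_\ell$, where $N_\ell$ is nilpotent (a direct sum of cells $J_k(0)$), and one block $M_j$ for each conjugate pair of non-real eigenvalues, so that $J=\bigoplus_\ell(\lambda_\ell I+N_\ell)\oplus\bigoplus_j M_j$ and the summands have pairwise disjoint spectra. Any matrix commuting with $J$ preserves each primary subspace, hence is block-diagonal, and one obtains the algebra direct sum $C(J)=\bigoplus_\ell C(N_\ell)\oplus\bigoplus_j C(M_j)$ as a subalgebra of $M_n(\R)$, using $C(\lambda_\ell I+N_\ell)=C(N_\ell)$. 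By hypothesis at least one real eigenvalue occurs, so some summand $C(N_{\ell_0})$ is present.

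The key step is to show that the centralizer of a nilpotent Jordan matrix $N=\bigoplus_i J_{k_i}(0)$ is nonnegatively generated. Here I would invoke the classical description of such centralizers (\cite[Ch.~VIII]{Gant60}, in the $R_{pq}$-notation recalled in Section~\ref{2}): a matrix lies in $C(N)$ precisely when it is a block matrix whose $(i,j)$ block has the regular (possibly rectangular) upper triangular form $R_{k_i k_j}(r^{(ij)}_1,\ldots,r^{(ij)}_{\min\{k_i,k_j\}})$, and the blocks are mutually independent. Letting $B^{(ij)}_t$ be the matrix whose $(i,j)$ block is $R_{k_i k_j}$ with a single $1$ in the $t$-th parameter slot and all other blocks zero, the family $\{B^{(ij)}_t\}$ is a basis of $C(N)$ consisting of $\{0,1\}$-matrices, in particular a nonnegative basis; so $C(N)$ is nonnegatively generated by Theorem~\ref{Criterion}. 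Thus $C(N_{\ell_0})$ is nonnegatively generated, a fortiori nonnegatively generated up to similarity.

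It remains to assemble. If $C(N_{\ell_0})$ is the only primary summand, then $C(A)\sim C(N_{\ell_0})$ is already nonnegatively generated. Otherwise, after a suitable permutation similarity, write $C(J)=\A\oplus C(N_{\ell_0})$, where $\A$ is the direct sum of all remaining summands $C(N_\ell)$ ($\ell\neq\ell_0$) and $C(M_j)$; this $\A$ is a unital matrix algebra, so item~1 of Theorem~\ref{direct_sum_theorem}, applied with $\B=C(N_{\ell_0})$, shows that $\A\oplus C(N_{\ell_0})$ is nonnegatively generated up to similarity. Since $C(A)\sim C(J)$, the algebra $C(A)$ is nonnegatively generated up to similarity. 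The main obstacle is the key step: one must be sure that the centralizer of a nilpotent Jordan matrix admits a nonnegative basis, which rests on the fact that in the regular-triangular block description every free parameter occurs with coefficient $+1$ and never $-1$, so specializing parameters to $0$ and $1$ preserves nonnegativity. It is equally essential that Theorem~\ref{direct_sum_theorem} is available, since the complex-pair components $C(M_j)$ may themselves fail to be nonnegatively generated even up to similarity (they can be copies of $\C$ or $\HH$), and only the single well-behaved summand $C(N_{\ell_0})$ makes the whole direct sum work; everything else is routine.
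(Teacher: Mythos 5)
Your proposal is correct and follows essentially the same route as the paper: reduce to real Jordan form, isolate the real-eigenvalue primary block, use the Gantmacher description of the centralizer of $\bigoplus_i J_{k_i}(\lambda)$ in terms of regular-triangular blocks $R_{k_ik_j}$ to produce nonnegative generators (you use item~3 of Theorem~\ref{Criterion} via a $\{0,1\}$-basis, while the paper uses item~1 via the single covering matrix with all parameters set to $1$ --- these are equivalent), and then apply Theorem~\ref{direct_sum_theorem}. The only cosmetic difference is that you split $C(J)$ into all primary summands at once, whereas the paper groups everything other than the chosen real-eigenvalue block into a single summand $C(P)$; both are covered by the two-summand statement of Theorem~\ref{direct_sum_theorem}.
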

\begin{proof}
	Without loss of generality, $A$ equals its real Jordan normal form. Consider an arbitrary $\lambda\in\sigma(A)\cap\R$. There are two cases.
\begin{enumerate}
	\item Let $\sigma(A)=\{\lambda\}$. Then $A=\bigoplus_{i=1}^kJ_{n_i}(\lambda)$. The centralizer $C(A)$ is of known structure. Each $R\in C(A)$ has a block form in accordance with the partition $A=\bigoplus_{i=1}^kJ_{n_i}(\lambda)$ (see \cite[Chapter VIII, \textsection $2$]{Gant60} or \cite[Chapter VII, 7.01]{Wed34}). Exactly for all $R\in C(A)$, we have $R=(R)_{\alpha\beta}$ with $\alpha,\beta=1,\ldots,k$ and $(R)_{\alpha\beta}=R_{n_\alpha n_\beta}(s^{\alpha\beta}_{1}, \ldots, s^{\alpha\beta}_{\min\{n_\alpha,n_\beta\}})$ has a regular triangular form. Conversely, for all real tuples $(s^{\alpha\beta}_{1}, \ldots, s^{\alpha\beta}_{\min\{n_\alpha,n_\beta\}})$, the matrices of the indicated form are included in $C(A)$. Consider $\h{R}\in C(A)$ such that for any $\alpha,\beta$, $(\h{R})_{\alpha\beta}=R_{n_\alpha n_\beta}(1, \ldots, 1)$. This matrix is obviously nonnegative and covers $C(A)$. Item $1$ of Theorem~\ref{Criterion} works.
	\item Let the cardinality $|\sigma(A)|>1$. We may assume that $A=P\oplus Q$ where $Q=\bigoplus_{i=1}^kJ_{n_i}(\lambda)$ is the direct sum of all Jordan cells corresponding to $\lambda$. Since $\sigma(P)\cap\sigma(Q)=\varnothing$, we have
	$C(A)=C(P)\oplus C(Q)$ \cite[Chapter VIII, \textsection $2$]{Gant60}.  It remains to apply the previous item and Theorem \ref{direct_sum_theorem}.
\end{enumerate}
\end{proof}

Now Theorem \ref{direct_sum_theorem} is applied to algebras with non-trivial center under a special condition.

\begin{Corollary} \label{center}
	Let $\A\subseteq M_n(\R)$ be a unital algebra. Assume that the center of $\A$ contains a matrix $A$ with the following property: there exists $\lambda\in\sigma(A)\cap\R$ of geometric multiplicity $1$. Then $\A$ is nonnegatively generated up to similarity.
\end{Corollary}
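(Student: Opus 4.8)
The plan is to split $\A$ by a central idempotent coming from Lemma~\ref{importantlemma}, identify the piece carrying the eigenvalue $\lambda$ with a full centralizer, and then glue back using Theorem~\ref{direct_sum_theorem}.

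First I would use that $A$ lies in the center of $\A$ to observe that every generalized eigenspace of $A$ is invariant under $\A$. Writing $\R^n=V_\lambda\oplus W$, where $V_\lambda=\ker (A-\lambda I_n)^n$ is the generalized eigenspace of $\lambda$ and $W$ is the sum of the generalized eigenspaces of the remaining eigenvalues, we have $\sigma(A|_{V_\lambda})=\{\lambda\}$ disjoint from $\sigma(A|_W)$, so Lemma~\ref{importantlemma} supplies a polynomial $h$ with $h(A)$ equal to the projection $E$ onto $V_\lambda$ along $W$. Then $E\in\A$, and $E$ is central in $\A$ because $A$ is. Passing to a basis adapted to the decomposition $V_\lambda\oplus W$ we may assume $E=I_d\oplus O_{n-d}$ with $d=\dim V_\lambda\geq 1$; the centrality of $E$ forces every element of $\A$ to be block diagonal, and using $E\in\A$ one gets that in fact $\A=\A_1\oplus\A_2$ for unital subalgebras $\A_1\subseteq M_d(\R)$, $\A_2\subseteq M_{n-d}(\R)$ (the summand $\A_2$ being absent when $W=0$).

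Next I would pin down $\A_1$. Put $A_1=A|_{V_\lambda}\in\A_1$; it is central in $\A_1$, has $\sigma(A_1)=\{\lambda\}$, and its eigenvalue still has geometric multiplicity $1$, so $A_1$ is non-derogatory, i.e.\ similar to the single Jordan cell $J_d(\lambda)$, and hence $C_{M_d(\R)}(A_1)=\R[A_1]$. Since $A_1$ is central in $\A_1$ we get $\A_1\subseteq C(A_1)$, while $\A_1$ unital and $A_1\in\A_1$ give $\R[A_1]=\gen{A_1}\subseteq\A_1$; therefore $\A_1=C(A_1)$. As $\sigma(A_1)\cap\R=\{\lambda\}\neq\varnothing$, Corollary~\ref{centralizer} yields that $\A_1=C(A_1)$ is nonnegatively generated up to similarity. (Concretely, $A_1$ is similar to $\lambda I_d+J_d(0)$, so $\A_1=\gen{A_1}$ is similar to $\gen{\lambda I_d+J_d(0)}=\gen{J_d(0)}$, which is generated by the nonnegative matrix $J_d(0)$.) Finally, Theorem~\ref{direct_sum_theorem} applied to $\A=\A_1\oplus\A_2$, with $\A_1$ nonnegatively generated up to similarity, gives that $\A$ is nonnegatively generated up to similarity; when $W=0$ we already have $\A=\A_1=C(A_1)$ and are done by Corollary~\ref{centralizer} alone.

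The one place that needs care is the equality $\A_1=C(A_1)$, and this is exactly where the hypothesis ``$\lambda$ has geometric multiplicity $1$'' enters: it makes $A_1$ non-derogatory, so its centralizer is only the commutative $d$-dimensional algebra $\R[A_1]$ of polynomials in it, leaving no room strictly between $\gen{A_1}$ and $C(A_1)$. If the geometric multiplicity were larger, $C(A_1)$ would be strictly bigger than $\gen{A_1}$ and $\A_1$ could be a proper subalgebra of it, which need not inherit the nonnegative-generation property, and the argument would break.
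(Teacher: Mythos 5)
Your proof is correct and follows essentially the same strategy as the paper: use the central idempotent $h(A)$ from Lemma~\ref{importantlemma} to split $\A$ into $\A_1\oplus\A_2$, observe that the $\lambda$-block lies in the centralizer of a single Jordan cell (because geometric multiplicity $1$ forces nonderogatory behaviour), and glue back with Theorem~\ref{direct_sum_theorem}. The only cosmetic difference is that the paper directly exhibits $J_k(\lambda)^{k-1}$ as a nonnegative covering matrix for that block (after noting all its elements have regular triangular form), whereas you pin the block down exactly as $C(A_1)=\gen{A_1}$ and invoke Corollary~\ref{centralizer}; both routes are valid and neither is circular since Corollary~\ref{centralizer} precedes Corollary~\ref{center}.
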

\begin{proof}
	Let $\lambda=1$, $A$ equals its real Jordan normal form without loss of generality. Since eigenvalue $1$ has geometric multiplicity $1$, $A$ contains exactly one Jordan cell corresponding to it. If $A=J_n(1)$, then all matrices from $\A$ have a regular triangular form (see \cite[Chapter VIII, \textsection $2$]{Gant60} or \cite[Chapter VII, 7.01]{Wed34}). So, $A^{n-1}$ is a nonnegative covering matrix. Let $A=P\oplus J_k(1)$, $1\notin\sigma(P)$, $k\geq 1$, $n-k\geq 1$. If $k=1$, all follows from Lemma~\ref{onelemma}. Let $k\geq 2$. Note that each $B\in\A$ has the form $B=X\oplus Y$ for some $Y\in M_k(\R)$ and $X\in M_{n-k}(\R)$ because $A$ is in the center of $\A$ and $1\notin\sigma(P)$ \cite[Chapter VIII, \textsection $2$]{Gant60}. Also $Y\in C(J_k(1))$, $Y$ has a regular triangular form. Applying Lemma~\ref{importantlemma} we find $f(x)\in\R[x]$ that $f(A)=O_{n-k}\oplus I_k$. Hence, $\A=\A_1\oplus\A_2$ and $J^{k-1}_k(1)$ is a nonnegative matrix covering $\A_2$. It remains to apply Theorem~\ref{direct_sum_theorem}. 
\end{proof}

The last corollary of Theorem \ref{direct_sum_theorem} is on one-generated algebras.

\begin{Corollary} \label{onegennonnegth}
	Let $A\in M_n(\R)$, $\sigma(A)\cap\R\neq\varnothing$. Then $\gen{A}$ is generated by one nonnegative matrix up to similarity.
\end{Corollary}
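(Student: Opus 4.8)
The plan is to reduce the one-generated case to the already-proved Corollary \ref{centralizer} together with Theorem \ref{direct_sum_theorem}, exploiting the fact that $\gen{A}$ sits inside $C(A)$ but need not equal it. Without loss of generality I would put $A$ in its real Jordan normal form and single out a real eigenvalue $\lambda\in\sigma(A)\cap\R$. Write $A=P\oplus Q$, where $Q$ is the direct sum of all Jordan cells of $A$ attached to $\lambda$ and $P$ collects the remaining blocks, so $\sigma(P)\cap\sigma(Q)=\varnothing$ and in particular $\lambda\notin\sigma(P)$. By Lemma \ref{importantlemma} there is a polynomial $h$ with $h(A)=O\oplus I_q$, hence $\gen{A}=\gen{P}\oplus\gen{Q}$ as a direct sum of algebras; this splits the problem.

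Next I would handle the two summands. On the $Q$-side, $Q$ has a single real eigenvalue $\lambda$, and by replacing $Q$ with $Q-\lambda I$ (which changes $\gen{Q}$ by nothing, since the identity is always available) we may assume $\lambda=0$, so $Q$ is nilpotent; then some power of a suitable nonnegative element will cover $\gen{Q}$ in the style of Corollary \ref{center}. Actually the cleanest route is: since $A$ has a real eigenvalue, Corollary \ref{centralizer} already tells us $C(A)$ is nonnegatively generated up to similarity, and a single nonnegative covering matrix $\h R$ for $C(A)$ was constructed there. The key observation is that $\gen{A}\subseteq C(A)$ and $\gen{A}$ itself contains a covering matrix of the same kind up to conjugation — indeed, since $\gen{A}\subseteq C(A)$ and $C(A)$ admits a nonnegative basis (Theorem \ref{Criterion}), it is enough to produce one nonnegative matrix in a conjugate of $\gen{A}$ that covers $\gen{A}$ and then invoke Corollary \ref{mpg-one}'s analogue; but the stronger statement ``generated by one nonnegative matrix'' forces us to work with a single generator $A$ rather than a generating set.

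So the honest plan is: conjugate $A$ by the matrix $C=\KKK_{k_1+1}\oplus I_{\dots}$ from Lemma \ref{horrortables} applied to the nilpotent part, after first arranging (by adding a large scalar multiple of the identity to $A$, which does not change $\gen{A}$) that the relevant row and column of the Jordan-block structure become strictly positive. Concretely, pick a Jordan cell of $Q$ of maximal size for $\lambda$, form $B=A+\beta I$ with $\beta$ large, so that all diagonal entries and the chosen $(1,1)$-entry are positive; then apply Lemma \ref{horrortables} (with $T$ the relevant block) to conjugate $B$ into a matrix with a strictly positive leading block that covers $\gen{B}=\gen{A}$. Since $\gen{A}=\gen{B}$ and a single conjugate of $B$ is nonnegative and covering, Theorem \ref{Criterion}(1) gives that the conjugated algebra has a nonnegative basis, and the single matrix $B$ (conjugated) generates it. The main obstacle is purely bookkeeping: one must verify that after the conjugation $C^{-1}BC$ is genuinely nonnegative \emph{and} that $\Omega(C^{-1}BC)=\Omega(C^{-1}\gen{A}C)$, i.e. that no off-block entries of $\gen{A}$ are missed — this is exactly the $\Omega$-comparison carried out in the proof of Theorem \ref{direct_sum_theorem}, so the cleanest write-up simply decomposes $\gen{A}=\gen{P}\oplus\gen{Q}$, applies Corollary \ref{centralizer} (or its nilpotent sub-case) to $\gen{Q}$ to see it is generated by one nonnegative matrix up to similarity, applies the one-generator case or Lemma \ref{onelemma} to $\gen{P}$ when $P$ contributes a real eigenvalue — and otherwise absorbs $\gen{P}$ via Theorem \ref{direct_sum_theorem}, whose proof already produces a \emph{single} generator $U_i$ for the direct sum when it starts from a single generator on each side.
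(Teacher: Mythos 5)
Your proposal ultimately lands on the same route the paper takes: shift the eigenvalue to put $A$ in real Jordan form, isolate $Q=\bigoplus_i J_{n_i}(0)$, split $\gen{A}=\gen{P}\oplus\gen{Q}$ via Lemma~\ref{importantlemma}, and invoke item~2 of Theorem~\ref{direct_sum_theorem}. That is exactly what the paper does, so the final strategy is sound. However, the path you take to get there is cluttered with a couple of steps that would not survive a careful write-up, and one simplification you miss makes everything else unnecessary.

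First, the centralizer detour is a genuine dead end, as you half-acknowledge: Corollary~\ref{centralizer} speaks about $C(Q)$, and $\gen{Q}\subsetneq C(Q)$ in general, so it does not literally apply to $\gen{Q}$. Second, the ``honest plan'' of forming $B=A+\beta I$ and applying Lemma~\ref{horrortables} directly does not work as stated: that lemma requires the matrix to have the form $O_{k_1}\oplus T$, i.e.\ a zero leading block, and $A+\beta I$ with $\beta$ large has a nonsingular leading block $P+\beta I$. The proof of Theorem~\ref{direct_sum_theorem}(2) circumvents this by writing $C^{-1}U_iC$ as a sum of two terms --- one of the required shape $O_{k_1}\oplus(z_iI+Z_i)$ handled by Lemma~\ref{horrortables}, the other bounded by the size of $z_i$ --- and you need that decomposition; you cannot skip it. Third, and most importantly, you never notice the elementary fact that settles the $Q$-side instantly: once $\lambda$ is shifted to $0$, the matrix $Q$ in Jordan form has only $0$'s and $1$'s, hence is itself nonnegative, and $\{Q\}$ (or $\varnothing$ if $Q=O$) is already a minimal nonnegative generating system for $\gen{Q}$. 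This is the single observation the paper needs before invoking Theorem~\ref{direct_sum_theorem}(2); no appeal to Corollary~\ref{center}, Corollary~\ref{centralizer}, or ``some power of a suitable nonnegative element'' is required. With that fix, your final paragraph's plan becomes precisely the paper's three-line proof.
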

\begin{proof}
Without loss of generality, $A$ equals its real Jordan normal form and $0\in\sigma(A)\cap\R$. Consider a direct sum of all cells with the eigenvalue $0$: $Q=\bigoplus_{i=1}^kJ_{n_i}(0)$, $m=\sum_{i=1}^k n_i$. If $A=Q$, then all is proved. Let $A=P\oplus Q$, $0\notin\sigma(P)$. Applying Lemma \ref{importantlemma}, we find $f(x)\in\R[x]$ with $f(A)=O_{(n-m)}\oplus I_m$. Therefore $\gen{A}=\gen{P}\oplus\gen{Q}$ and all follows from item $2$ of Theorem~\ref{direct_sum_theorem}. 
\end{proof}

\begin{Problem}
	Find necessary and sufficient conditions for a given unital matrix algebra $\A$ to be similar to $(1)$ a nonnegatively generated algebra, $(2)$ a minimally nonnegatively generated algebra, $(3)$ an \MPG-algebra.
\end{Problem}

\section{Nonnegative semi-commuting generating systems of matrix incidence algebras}\label{5}

Here we find all possible dimensions of algebras generated by two nonnegative semi-commuting matrices. First show that a real matrix incidence algebra has such generators.

\begin{Theorem} \label{theorem_1}
	Each real matrix incidence algebra $\A$ is generated by two nonnegative semi-commuting matrices.
\end{Theorem}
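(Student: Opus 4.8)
By Theorem~\ref{iso} we may assume, after conjugation by a permutation matrix, that $\A\subseteq T_n(\R)$, so $D_n(\R)\subseteq\A$ and $\A=\lin{\{E_{ij}\mid(i,j)\in\Omega(\A)\}}$ with $\Omega(\A)$ a reflexive transitive relation contained in $\{(i,j)\mid i\le j\}$. By Theorem~\ref{IncGen} it suffices to produce a \emph{single} nonnegative matrix $A$ that covers $\A$ together with a nonnegative diagonal matrix $D=\diag\{d_1,\ldots,d_n\}$ with distinct entries, and then to arrange that this particular pair $\{A,D\}$ is semi-commuting. The plan is therefore: first fix $D$ with $0<d_1<d_2<\cdots<d_n$; then take $A=\sum_{(i,j)\in\Omega(\A)}E_{ij}$, the $0/1$ covering matrix of $\A$, possibly rescaled, and compute the commutator $[A,D]=AD-DA$, whose $(i,j)$ entry is $a_{ij}(d_j-d_i)$. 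Since $A$ is supported on $\Omega(\A)\subseteq\{i\le j\}$ and is $0$ on the diagonal, every nonzero off-diagonal entry of $[A,D]$ sits strictly above the diagonal where $i<j$, hence $d_j-d_i>0$; thus $[A,D]=AD-DA\ge O$ automatically, and in fact it is nonnegative with exactly the same support as the strictly-upper part of $A$. So with $A$ the $0/1$ covering matrix and any strictly increasing positive diagonal $D$, the pair is already nonnegative and semi-commuting, and generates $\A$ by Theorem~\ref{IncGen}.

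The only gap in that argument is the degenerate case $\A=D_n(\R)$ (equivalently $\Omega(\A)$ is just the diagonal), where the covering matrix $A$ of $\A$ is itself diagonal and $\{A,D\}$ need not generate all of $D_n(\R)$ with two elements in the way Theorem~\ref{IncGen} demands a covering matrix plus a separating diagonal. But $D_n(\R)=\gen{D}$ for a single $D$ with distinct entries, so here one takes $A=B=D$ (a nonnegative diagonal with distinct positive entries); then $[A,B]=O$, the pair trivially semi-commutes, and $\gen{D,D}=\gen{D}=D_n(\R)$. More generally, whenever $\Omega(\A)$ \emph{does} contain an off-diagonal pair, the first paragraph's construction applies verbatim.

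\textbf{Main obstacle.} The substantive point to verify carefully is that the $0/1$ matrix $A=\sum_{(i,j)\in\Omega(\A)}E_{ij}$ genuinely \emph{covers} $\A$ in the sense of the Definition, i.e.\ that $\Omega(A)=\Omega(\A)$; this is immediate from the construction since $\A$ is spanned by its matrix units. One then only needs that the hypotheses of Theorem~\ref{IncGen} are met: $D_n(\R)\subseteq\A$ (true after the reduction via Theorem~\ref{iso}, since a matrix incidence algebra contains all diagonal matrix units), $A$ covers $\A$, and $D$ has pairwise distinct diagonal entries. All of these hold, so $\gen{A,D}=\A$. The semi-commuting condition is then the almost-free byproduct recorded above: because the reduction places $\A$ inside $T_n(\R)$, the sign of $d_j-d_i$ on the support of $A$ is controlled, forcing $[A,D]\ge O$. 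Hence no real obstacle remains beyond bookkeeping; the one subtlety worth a sentence in the write-up is that conjugating by a permutation matrix (Theorem~\ref{iso}) preserves nonnegativity, so exhibiting the generators for the triangular model $\B=P^{-1}\A P$ immediately yields nonnegative semi-commuting generators $PAP^{-1}$, $PDP^{-1}$ for $\A$ itself.
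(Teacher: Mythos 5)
Your argument is correct and is essentially the same as the paper's: reduce to the upper-triangular case via Theorem~\ref{iso}, pick a nonnegative covering matrix $A$ and a monotone positive diagonal $D$, observe that the appropriate commutator is nonnegative because the support of $A$ lies on or above the diagonal, and invoke Theorem~\ref{IncGen}. Two tiny inaccuracies worth noting: your covering matrix $A=\sum_{(i,j)\in\Omega(\A)}E_{ij}$ is \emph{not} zero on the diagonal (it has $1$'s there, since every $E_{ii}\in\A$), though the diagonal of $[A,D]$ is still zero so nothing breaks; and the "degenerate case" $\A=D_n(\R)$ is not actually a gap, since Theorem~\ref{IncGen} applies there as stated ($A$ covers $\A$ and $\gen{A,D}\supseteq\gen{D}=D_n(\R)$), so the separate treatment, while harmless, is unnecessary.
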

\begin{proof}
	First assume that $\A\subseteq T_n(\R)$. Let $A,D\in \A$, $A$ be a nonnegative matrix covering $\A$, $D = \diag\{d_1,d_2,\ldots,d_n\}$ with $d_1> d_2>\ldots > d_n> 0$. Then 
	$$[D, A] = DA - AD =\sum\limits_{i\leq j}d_ia_{ij}E_{ij} - \sum\limits_{i\leq j}d_ja_{ij}E_{ij}=\sum\limits_{i\leq j} (d_i - d_j)a_{ij}E_{ij}\geq 0.$$
	So, $A,D$ semi-commute. Also they generate the algebra $\A$ by virtue of Theorem \ref{IncGen}.
	
	Let the incidence algebra $\A$ be not upper-triangular. Theorem \ref{iso} ensures that there exists a permutation matrix $P$ such that $\w{\A}=P^{-1}\A P$ is a triangular incidence algebra. Let $\w{A},\w{B}$ be semi-commuting nonnegative matrices that generate $\w{\A}$. Consider $A=P\w{A}P^{-1}$, $B=P\w{B}P^{-1}$. Then $[A,B]=P[\w{A},\w{B}]P^{-1}$. Besides, $\gen{\{A,B\}}=\A$. Since $P$ is a permutation matrix, we have $A,B,[A,B]\geq O$. 
\end{proof}

Now we determine the dimensions which matrix incidence algebras may have.

\begin{Theorem} \label{theorem_2}
	Fix natural $n\geq 2$. Then for each $k\in \mathbb{N}$ with $n\leq k\leq\frac{n(n+1)}{2}$, there exists a matrix incidence algebra $\A$ of the dimension $k$.
	
\end{Theorem}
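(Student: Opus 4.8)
The plan is to exhibit, for each admissible $k$, a concrete matrix incidence algebra inside $T_n(\R)$ of dimension exactly $k$. Since a matrix incidence algebra with all diagonal units has dimension $n+|\{(i,j):i<j,\ E_{ij}\in\A\}|$, we need to realize every cardinality $m$ with $0\le m\le\binom{n}{2}$ as the size of an admissible ``edge set'' $\Omega=\{(i,j):i<j,\ E_{ij}\in\A\}$. The constraint is that $\Omega$, together with all diagonal pairs, must be closed under matrix multiplication of units, i.e. if $(i,j)\in\Omega$ and $(j,l)\in\Omega$ then $(i,l)\in\Omega$; equivalently $\Omega$ is the strict part of a partial order on $\{1,\dots,n\}$. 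So the task reduces to: for every $m$ with $n-?\le$ (the number of diagonal units forces the lower bound $k\ge n$) realize a partial order on $n$ elements whose strict part has exactly $m$ pairs, for all $0\le m\le\binom{n}{2}$.

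First I would handle the extremes: $m=0$ is the antichain (algebra $D_n(\R)$, dimension $n$), and $m=\binom{n}{2}$ is the total order $1<2<\dots<n$ (algebra $T_n(\R)$, dimension $\binom{n}{2}+n=\tfrac{n(n+1)}{2}$). For the intermediate values I would build a chain of partial orders interpolating between these two, adding one relation at a time while maintaining transitivity. A clean way: consider the ``staircase'' posets. Fix the linear order $1<2<\dots<n$ and then, reading relations in a fixed order (say, by increasing $j-i$, and within that by increasing $i$), delete them one by one, each time checking that what remains is still transitively closed. Concretely, one can take the family of posets $P_m$ whose comparabilities are $\{(i,j): i<j \text{ and } j-i \le \text{(some threshold depending on } m)\}$ adjusted at the boundary — the set ``$j-i\le t$'' is itself transitive only for $t\ge n-1$ or $t\le 1$, so instead I would use a genuinely order-theoretic construction: start from $T_n$ and repeatedly remove a \emph{maximal} relation $(i,j)$ (one with no $(i,l),(l,j)\in\Omega$ forcing it), which keeps the relation transitive; such a removable relation always exists as long as $\Omega$ is nonempty, because a covering pair in the Hasse diagram is always removable. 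This yields a decreasing chain $T_n=\Omega_{\binom{n}{2}}\supsetneq\Omega_{\binom{n}{2}-1}\supsetneq\dots\supsetneq\Omega_0=\varnothing$ of transitive sets with $|\Omega_m|=m$, hence for each $m$ the algebra $\A_m=\lin{\{E_{ii}\}\cup\{E_{ij}:(i,j)\in\Omega_m\}}$ is a matrix incidence algebra of dimension $n+m$, and as $m$ ranges over $0,\dots,\binom{n}{2}$ we get every dimension $k$ from $n$ to $\tfrac{n(n+1)}{2}$.

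The main obstacle is verifying that a one-relation-at-a-time descent is always possible, i.e. that any nonempty strict partial order contains a relation whose removal leaves a strict partial order: this is exactly the statement that deleting a covering (Hasse) edge preserves transitivity, which is standard but should be stated and checked. The rest is bookkeeping: confirming the algebra axioms ($E_{ij}E_{kl}=\delta_{jk}E_{il}$ stays inside the span, closure under the three incidence-basis conditions, unitality), and computing the dimension as $n+|\Omega_m|$.

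Then Corollary \ref{solution} follows at once: combining Theorem \ref{theorem_1} (each such $\A_m$ is generated by two nonnegative semi-commuting matrices) with Theorem \ref{theorem_2} gives, for every $k$ with $n\le k\le\tfrac{n(n+1)}{2}$, a pair of nonnegative semi-commuting matrices whose unital algebra has dimension $k$, answering Problem \ref{problem} affirmatively.
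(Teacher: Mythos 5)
Your proof is correct but takes a genuinely different route from the paper's. The paper exhibits an explicit family of algebras: for each $k$ it takes block-diagonal matrices $D_{n-m-1}\oplus X_{m+1}$ where $D_{n-m-1}$ is diagonal and $X_{m+1}$ consists of a full upper-triangular $m\times m$ block together with a top row having a prescribed run of initial zeros, and then it verifies closure under multiplication by a direct block computation. You instead reduce to poset combinatorics: a matrix incidence algebra inside $T_n(\R)$ corresponds to a strict partial order $\Omega$ on $\{1,\dots,n\}$ (with $(i,j)\in\Omega\Rightarrow i<j$) via its off-diagonal matrix units, and it has dimension $n+|\Omega|$, so it suffices to realize every size $m\in\{0,\dots,\binom{n}{2}\}$ by a transitive $\Omega$. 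Your descending chain — start from the total order and delete one covering pair at a time — achieves this, since any nonempty such $\Omega$ contains a covering pair (take $(i,j)\in\Omega$ minimizing $j-i$), and deleting a covering pair preserves transitivity: if $(a,b),(b,c)\in\Omega\setminus\{(i,j)\}$ but $(a,c)=(i,j)$, then $b$ would witness that $(i,j)$ was not covering. Your route is conceptually cleaner and isolates the combinatorial content from the algebra-closure check (which collapses to $E_{ij}E_{jl}=E_{il}$); the paper's construction is more hands-on and yields a closed-form realizing algebra for each $k$. You correctly flag the two sub-claims about covering pairs as the places needing verification — both are easy, but should be written out to make the argument self-contained. (Your throwaway remark that $\{(i,j):j-i\le t\}$ is transitive for $t\le 1$ is false for $t=1$, $n\ge 3$, but you discard that construction anyway, so it does not affect the argument.)
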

\begin{proof} Consider matrix algebras of the following type
	
	$${
	\left(\begin{array}{ccccc}
	*& \ldots & \ldots & \ldots & \ldots \\
	0 & * & 6 & 5 & 4 \\
	0 & 0 & * & 3 & 2 \\
	0 & 0 & 0 & * & 1 \\
	0 & 0 & 0 & 0 & * 
	\end{array}\right).}
	$$
	
	Any real numbers can be instead of $*$ and in the positions $1,2,3,\ldots,k-n$. Zeroes are elsewhere. In other words, the algebra includes exactly matrices of the type
	$$
	\left(\begin{array}{cc}
	D_{n-m-1} & O \\
	O & X_{m+1}
	\end{array}\right)
	$$
	where $D_{n-m-1}$ is a square diagonal matrix of size $n-m-1$, $X_{m+1}$ is ${(m+1)\times(m+1)}$ matrix. Also
	
	$$X_{m+1}=
	\left(\begin{array}{cc}
	\alpha & c^{T} \\
	O_{m\times 1} & Y_{m}
	\end{array}\right)
	$$
	with an upper-triangular matrix $Y_{m}$, a real scalar $\alpha$, and a real vector $~c^{T} = (0, \ldots, 0,c^{l}, \ldots,c^{m}).$ The numbers $m,~l$ are uniquely determined by the parameter $k$. 
	
	It is necessary to show that the algebra was constructed correctly. It is enough to explain why the zeroes are saved in the pointed positions after the matrix multiplication
	$$
	\left(\begin{array}{cc}
	D_{n-m-1} & O \\
	O & X_{m+1}
	\end{array}\right)\cdot
	\left(\begin{array}{cc}
	\widetilde{D}_{n-m-1} & O \\
	O & \widetilde{X}_{m+1}
	\end{array}\right).
	$$
	The zeroes are saved out of blocks and in the diagonal $(n-m-1)\times(n-m-1)$ matrices. Consider
	$$X_{m+1}\widetilde{X}_{m+1}=
	\left(\begin{array}{cc}
	\alpha & c^{T} \\
	O_{m\times 1} & Y_{m}
	\end{array}\right)\cdot
	\left(\begin{array}{cc}
	\widetilde{\alpha} & \widetilde{c}^{~T} \\
	O_{m\times 1} & \widetilde{Y}_{m}
	\end{array}\right) = 
	\left(\begin{array}{cc}
	\alpha\widetilde{\alpha} & \alpha\widetilde{c}^{~T}+ c^{T}\widetilde{Y}_m \\
	O_{m\times 1} & Y_{m}\widetilde{Y}_{m}
	\end{array}\right).
	$$
	
	A product of two upper-triangular matrices  $Y_m\widetilde{Y}_m$ is upper-triangular, therefore the zeroes are saved here. 
	Let $\widetilde{y}_{i}$ be the $i$-th column of the matrix $\widetilde{Y}_m$. Since the matrix is upper-triangular, then $\widetilde{y}^{~T}_{i} = (\widetilde{y}^{~1}_{i}, \widetilde{y}^{~2}_{i}, \ldots, \widetilde{y}^{~i}_{i}, 0, \ldots, 0).$
	We have	
	$$\alpha\widetilde{c}^{~T}+ c^{T}\widetilde{Y}_m = (0,\ldots, 0, \alpha\widetilde{c}^{~l},\ldots, \alpha\widetilde{c}^{~m}) + (<c, \widetilde{y}_{1}>, \ldots, <c, \widetilde{y}_{m}>)$$
	where $<\cdot,\cdot>$ is the Euclidean scalar product. Then for each $i<l$, ${<c, \widetilde{y}_{i}>=0}$. Hence the zeroes are also preserved here.
	
	Thus, algebra is designed correctly. It has the basis of $k$ matrix units which fulfils the definition of matrix incidence algebra.  
\end{proof}

Theorems \ref{theorem_1}, \ref{theorem_2} imply the following corollary which provides the solution to Problem \ref{problem}.

\begin{Corollary} \label{solution}
	Fix natural $n\geq 2$. Then for any $k\in \mathbb{N}$ with $n\leq k\leq\frac{n(n+1)}{2}$ there exist matrices $A,B \in M_n(\R)$ such that $A, B\geq 0$, $[A, B]\geq 0$, $\dim\gen{\{A, B\}} = k$.
\end{Corollary}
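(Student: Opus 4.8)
The plan is to combine Theorems~\ref{theorem_1} and~\ref{theorem_2} directly, translating from the abstract statement about matrix incidence algebras to the concrete statement about pairs of matrices. First I would invoke Theorem~\ref{theorem_2} to obtain, for the given $n\geq 2$ and each $k$ with $n\leq k\leq\frac{n(n+1)}{2}$, a matrix incidence algebra $\A\subseteq M_n(\R)$ with $\dim\A=k$. Then I would apply Theorem~\ref{theorem_1} to this $\A$, which yields two nonnegative semi-commuting matrices $A,B\in M_n(\R)$ such that $\gen{\{A,B\}}=\A$. Unwinding the definitions of ``nonnegative'' ($A,B\geq O$) and ``semi-commuting'' (here the construction in the proof of Theorem~\ref{theorem_1} produces $[A,B]\geq O$, not merely $[A,B]\leq O$, so the stated sign is the correct one), and using $\dim\gen{\{A,B\}}=\dim\A=k$, gives exactly the conclusion.

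The only point that requires a word of care is that Corollary~\ref{solution} explicitly asks for $[A,B]\geq O$, whereas the definition of semi-commuting allows either sign. I would note that the matrices produced in the proof of Theorem~\ref{theorem_1} satisfy $[D,A]\geq O$ in the upper-triangular case (this is the displayed computation $\sum_{i\leq j}(d_i-d_j)a_{ij}E_{ij}\geq O$, valid because $d_1>d_2>\cdots>d_n>0$), and conjugating by a permutation matrix $P$ preserves entrywise nonnegativity of $A$, $B$, and $[A,B]=P[\w A,\w B]P^{-1}$. Hence the commutator is genuinely nonnegative, not just comparable to $O$, so the stronger form demanded in the corollary holds verbatim.

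There is essentially no obstacle here: the corollary is a packaging of two already-proved theorems, and the entire content is in Theorems~\ref{theorem_1} and~\ref{theorem_2}. If anything, the ``main obstacle'' was absorbed into Theorem~\ref{theorem_2}, namely exhibiting, for every intermediate dimension $k$, an explicit incidence-algebra pattern realizing it; the block construction with the diagonal part of size $n-m-1$ and the upper-triangular ``staircase'' block $X_{m+1}$ does this. Given those results, I would simply write: by Theorem~\ref{theorem_2} choose a matrix incidence algebra $\A$ of dimension $k$, and by Theorem~\ref{theorem_1} choose nonnegative semi-commuting $A,B$ with $\gen{\{A,B\}}=\A$; then $A,B\geq O$, $[A,B]\geq O$, and $\dim\gen{\{A,B\}}=k$, as required.
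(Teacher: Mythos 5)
Your proposal is correct and matches the paper exactly: the paper also derives Corollary~\ref{solution} as an immediate consequence of Theorems~\ref{theorem_1} and~\ref{theorem_2}, with no additional argument. The extra remark about the sign of the commutator is fine but unnecessary, since even if a pair satisfied $[A,B]\leq O$ one could simply swap the roles of $A$ and $B$.
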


\begin{center}
{\bf Acknowledgements}
\end{center}
The author is profoundly grateful to his supervisor Prof. O.V. Markova for posing the problems and helpful discussions, to Prof. A.E. Guterman for useful comments.

\end{document}